\newcommand {\defeq}{\stackrel{\rm def}{=}}
\newtheorem{theorem}{Theorem}
\newtheorem{lemma}{Lemma}
\newtheorem{definition}{Definition}
\newtheorem{remark}{Remark}
\def\ph{\varphi}
\def\RR{{\rm I\!\!R}}
\def\NN{{\rm I\!\!N}}
\def\<{\langle}
\def\>{\rangle}
\newcommand{\norm}[1]{\left\|#1\right\|}
\newcommand{\normtv}[1]{\|#1\|_{\mathrm{TV}}}
\newcommand{\pds}[2]{\<#1,#2\>}
\newcommand{\nnorm}[1]{\big|\!\big|\!\big|#1\big|\!\big|\!\big|}
\newcommand{\parenth}[1]{\left(#1\right)}
\newcommand{\crochets}[1]{\left[#1\right]}
\newcommand{\prox}{\mathrm{prox}}
\newcommand{\rprox}{\mathrm{rprox}}
\newcommand{\Id}{\mathrm{Id}}
\newcommand{\HT}{{{\cal T}_{\mathrm{H}}}}
\newcommand{\ST}{{{\cal T}_{\mathrm{S}}}}
\newcommand{\E}[1]{\mathbb{E}\crochets{#1}}
\newcommand{\var}[1]{\mathrm{Var}\crochets{#1}}
\def\phi{\varphi}
\def\sign{\mbox{\rm sign}}
\def\bpic{\begin{picture}}  \def\epic{\end{picture}}
\def\bfig{\begin{figure}}   \def\efig{\end{figure}}
\def\bc{\begin{center}}     \def\ec{\end{center}}
\def\beq{\begin{equation}}  \def\eeq{\end{equation}}
\def\beqn{\begin{eqnarray}} \def\eeqn{\end{eqnarray}}
\def\beqnn{\begin{eqnarray*}}   \def\eeqnn{\end{eqnarray*}}
\def\barr{\begin{array}}    \def\earr{\end{array}}
\def\bit{\begin{itemize}}   \def\eit{\end{itemize}}
\def\ben{\begin{enumerate}} \def\een{\end{enumerate}}
\def\h{\hat}    \def\t{\tilde}
\def\disp#1{{\displaystyle #1}}
\def\bm#1{\mbox{\boldmath $#1$}}
\def\C{{\cal C}}
\def\F{{\cal F}}
\def\H{{\cal H}}
\def\T{{\cal T}}
\def\X{{\cal X}}
\def\Wi{\widetilde W}
\def\W{W}
\def\w{w}
\def\wi{\widetilde w}
\def\O{\Omega}
\def\bc{\begin{center}}
\def\ec{\end{center}}
\def\UN{{\mathchoice {\rm 1\mskip-4mu l} {\rm 1\mskip-4mu l}{\rm 1\mskip-4.5mu l} {\rm 1\mskip-5mu l}}}
\def\pdf{{\rm pdf}}
\def\nablaD{\ddot{\nabla}}
\def\divD{{\rm Div}}
\def\gradf{\mathsf{D}}
\begin{document}

\title{\Large\bf Multiplicative Noise Removal Using L1 Fidelity on Frame Coefficients}
\author{Durand S.$^*$, Fadili J.$^\dagger$ and Nikolova M.$^\diamond$}
\date{\small $^*$ M.A.P. 5, Université René Descartes (Paris V), 45 rue des Saint Pères, 75270 Paris Cedex 06, France\\
email: Sylvain.Durand@mi.parisdescartes.fr\\
$^\dagger$  GREYC CNRS-ENSICAEN-Universi\'e de Caen
6, Bd Maréchal Juin 14050 Caen Cedex, France\\
email: Jalal.Fadili@greyc.ensicaen.fr\\
$^\diamond$ CMLA, ENS Cachan, CNRS, PRES UniverSud,  61 Av. President Wilson, 94230 Cachan, France
  \\ email: Mila.Nikolova@cmla.ens-cachan.fr\\
  (\em alphabetical order of the authors)
}
\maketitle
\thispagestyle{empty}

\begin{abstract}
We address the denoising of images contaminated with multiplicative noise, e.g. speckle noise.
Classical ways to solve such problems are filtering, statistical (Bayesian) methods, variational methods, and
methods that convert the multiplicative noise into additive noise (using a logarithmic function),
shrinkage of the coefficients of the log-image data in a wavelet basis or in a frame, and transform back the result
using an exponential function.

We propose a method composed of several stages: we use the log-image data and apply a reasonable
under-optimal hard-thresholding on its
curvelet transform; then we apply a variational method where we minimize a specialized criterion composed
of an $\ell^1$ data-fitting to the thresholded coefficients and a Total Variation regularization (TV)
term in the image domain;
the restored image is an exponential of the obtained minimizer, weighted in a way that
the mean of the original image is preserved.
Our restored images combine the advantages of shrinkage and variational
methods and avoid their main drawbacks.
For the minimization stage, we propose a properly adapted  fast minimization scheme based on
Douglas-Rachford splitting. The existence of a minimizer of our specialized criterion being proven,
we demonstrate the convergence of the minimization scheme.
The obtained numerical results outperform the main alternative methods.
\end{abstract}

\addtolength{\baselineskip}{1.2mm}

\section{Introduction}

In various active imaging systems, such as synthetic aperture radar, laser or ultrasound imaging, the data
representing the underlying (unknown image) $S_0:\O\to\RR_+$, $\O\subset\RR^2$, are corrupted with multiplicative noise.
It is well known that such a noise severely degrades the image (see Fig.~\ref{diffT}(a)).
In order to increase the chance of restoring a cleaner image, several independent measurements for the same image are
realized, thus yielding a set of data:
\beq S_k =  S_0\,\eta_k+n_k,~~~\forall k\in\{1,\cdots,K\},\label{S_k}\eeq
where $\eta_k:\O\to\RR_+$, and $n_k$ represent the multiplicative and the additive noise relevant to each measurement $k$.
Usually, $n_k$ is white Gaussian noise.
A commonly used and realistic model for the distribution of $\eta_k$ is the one-sided exponential distribution:
\[\eta_k ~~: ~~ \pdf(\eta_k) = \mu\, e^{-\mu \eta_k}\,\UN_{\RR_+}(\eta_k);\]
the latter is plotted in Fig. \ref{LAWS}(a).
Let us remind that $1/\mu$ is both the mean and the standard deviation of this distribution.
The usual practice is to take an average of the set of all measurements---such an image can be seen in (see Fig.~\ref{diffT}(b)).
Noticing that
 $\disp{\frac{1}{K}\sum_{k=1}^Kn_k\approx0}$, the data production model reads
\beq S = \frac{1}{K}\sum_{k=1}^K S_k =  S_0\,\frac{1}{K}\sum_{k=1}^K\eta_k = S_0\,\eta,\label{expo}\eeq
see e.g. \cite{Ulaby89,Achim01,Xie02} and many other references.
A reasonable assumption is that all $\eta_k$ are independent and share
the same mean $\mu$.
Then the resultant mean of the multiplicative noise $\eta$ in \eqref{expo} is known to follow a
Gamma distribution,
\beq \eta =  \frac{1}{K}\sum_{k=1}^K\eta_k : ~~~ \pdf(\eta) =
\disp{\left(\frac{K}{\mu}\right)^K\frac{\eta^{K-1}}{\Gamma(K)}\exp\left(-\frac{K\eta}{\mu}\right)},\label{Gamma}
\eeq
where $\Gamma$ is the usual Gamma-function and since $K$ is integer, $\Gamma(K)=(K-1)!$.  Its mean is again $\mu$ and its standard deviation is $\mu/K$.
It is shown in Fig. \ref{LAWS}(b).

Various adaptive filters for the restoration of images contaminated with multiplicative noise
have been proposed in the past, e.g. see \cite{Yu02,Krissian07} and the numerous references therein.
It can already been seen that filtering methods work well basically when the noise is moderate or weak, i.e. when $K$ is large.
Bayesian or variational methods have been proposed as well; one can consult for instance
\cite{Walessa00,Rudin03,AubertAujol08,Ng08} and the references cited therein.

A large variety of methods---see e.g. \cite{Fukuda98,Achim06}, more references are given in
\S~\ref{logdatamethods}---rely on
the conversion of the multiplicative noise into additive noise using
\beq v = \log S = \log S_0 + \log \eta = u_0 + n. \label{logN}\eeq
In this case the probability density function  of $n$ reads (see Fig. \ref{LAWS}(c)):
\beq
n = \log \eta : ~~~\pdf(n) = \disp{\left(\frac{K}{\mu}\right)^K\frac{1}{\Gamma(K)}\exp K(n-\mu e^n)} ~.
\label{3gamma}\eeq
One can prove that
\beqn
\E{n}&=&\psi_0(K)-\log K\label{esp} ~,\\
\var{n} &=& \psi_1(K)\label{var},
\eeqn
 where
\beq\psi_k(z) = \parenth{\frac{d}{dz}}^{k+1}\log\Gamma(z)\label{polyG}\eeq
is the polygamma function \cite{Abramovitz72}.

Classical SAR modeling---see \cite{Tur82,Ulaby89} and many other references---correspond to $\mu=1$ in \eqref{Gamma}.
Then \eqref{Gamma} and \eqref{3gamma} boil  down to
\beqn
\pdf(\eta) &=&
\frac{K^K\eta^{K-1}e^{-K\eta}}{(K-1)!} ~,\nonumber\\
\pdf(n) &=& \frac{K^Ke^{K(n-e^n)}}{(K-1)!} ~.
\label{pdfn}
\eeqn

\bfig
\begin{center}
\begin{tabular}{cccc}
\epsfig{figure=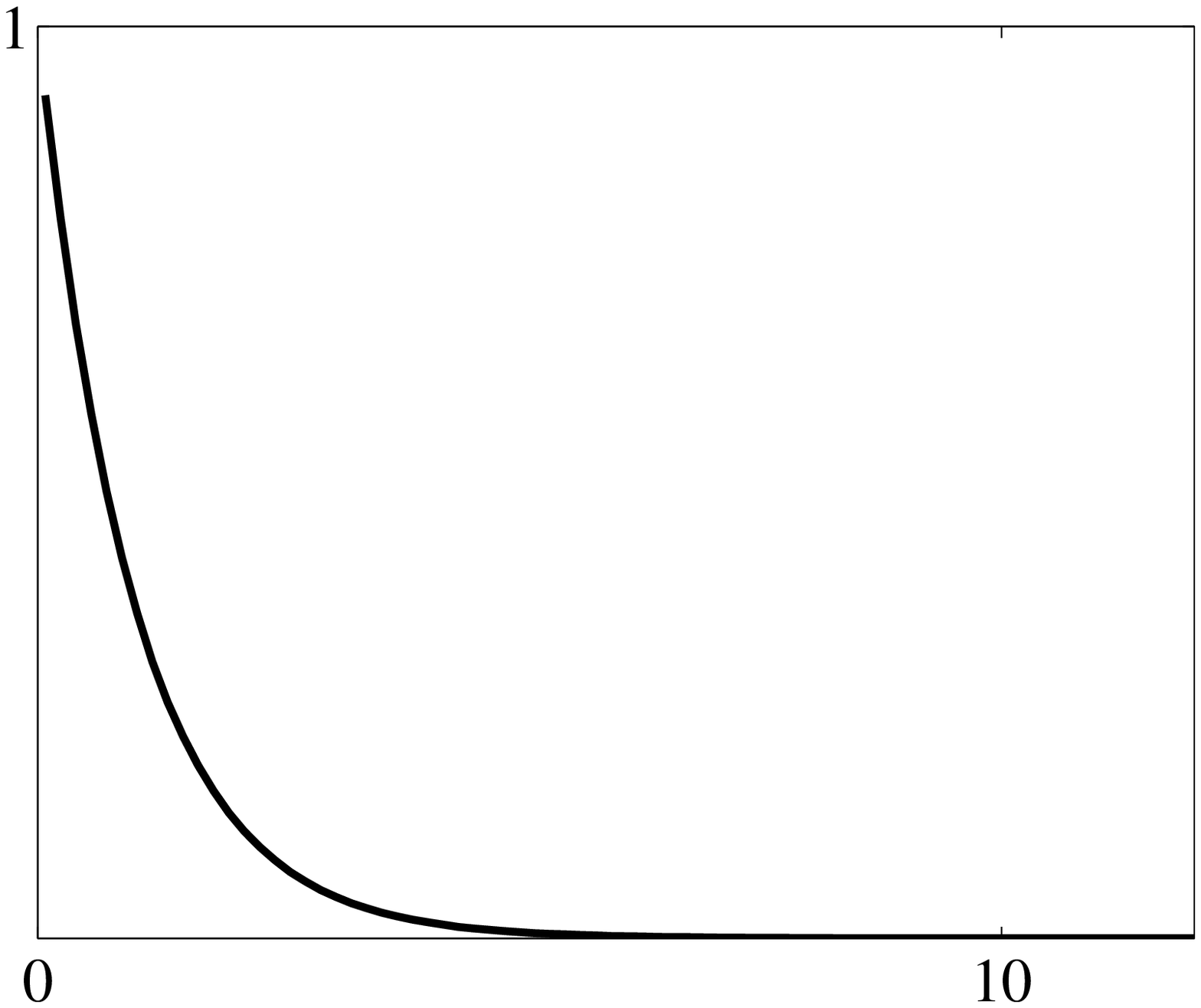,width=3.4cm}&
\epsfig{figure=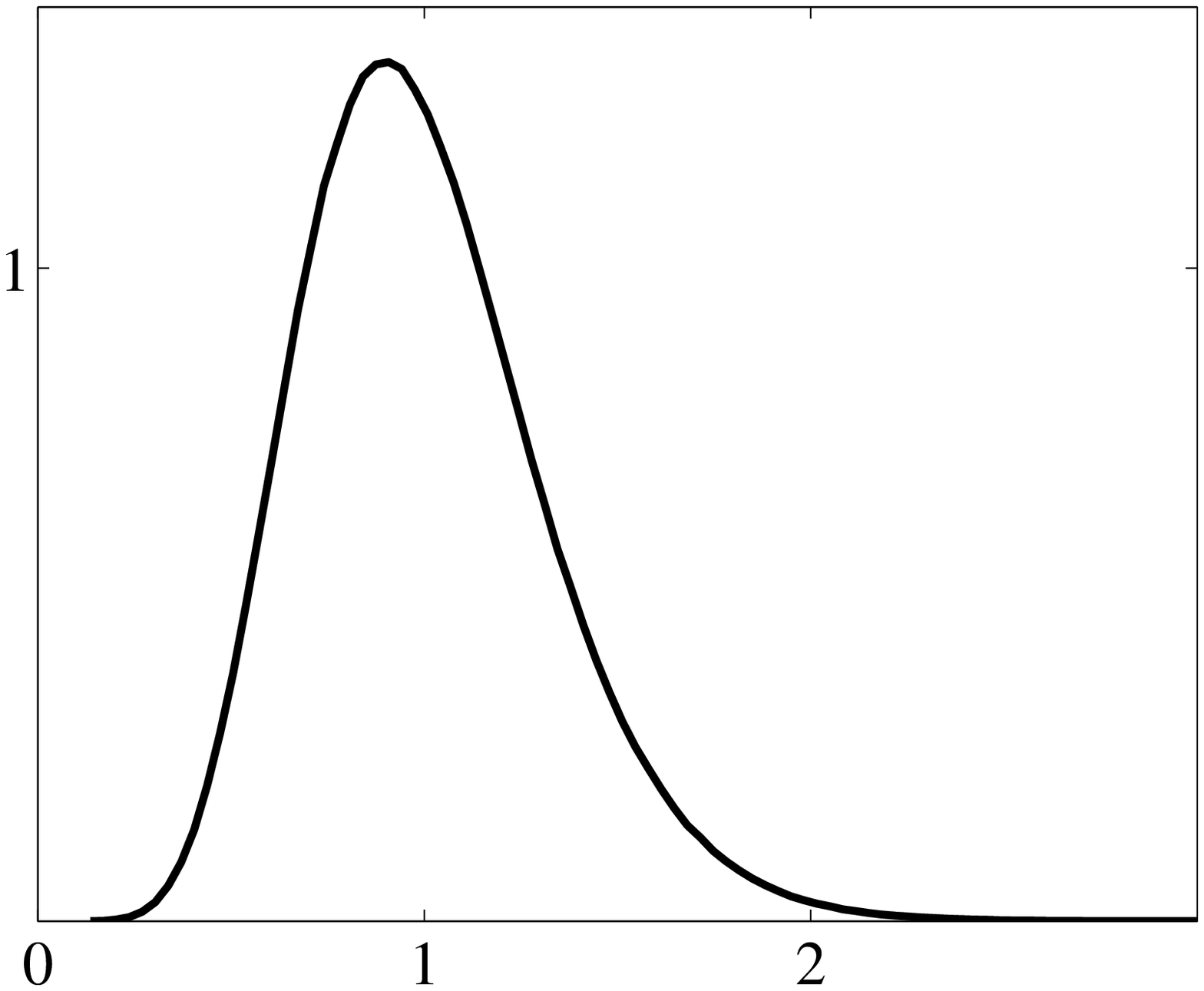,width=3.4cm}&
\epsfig{figure=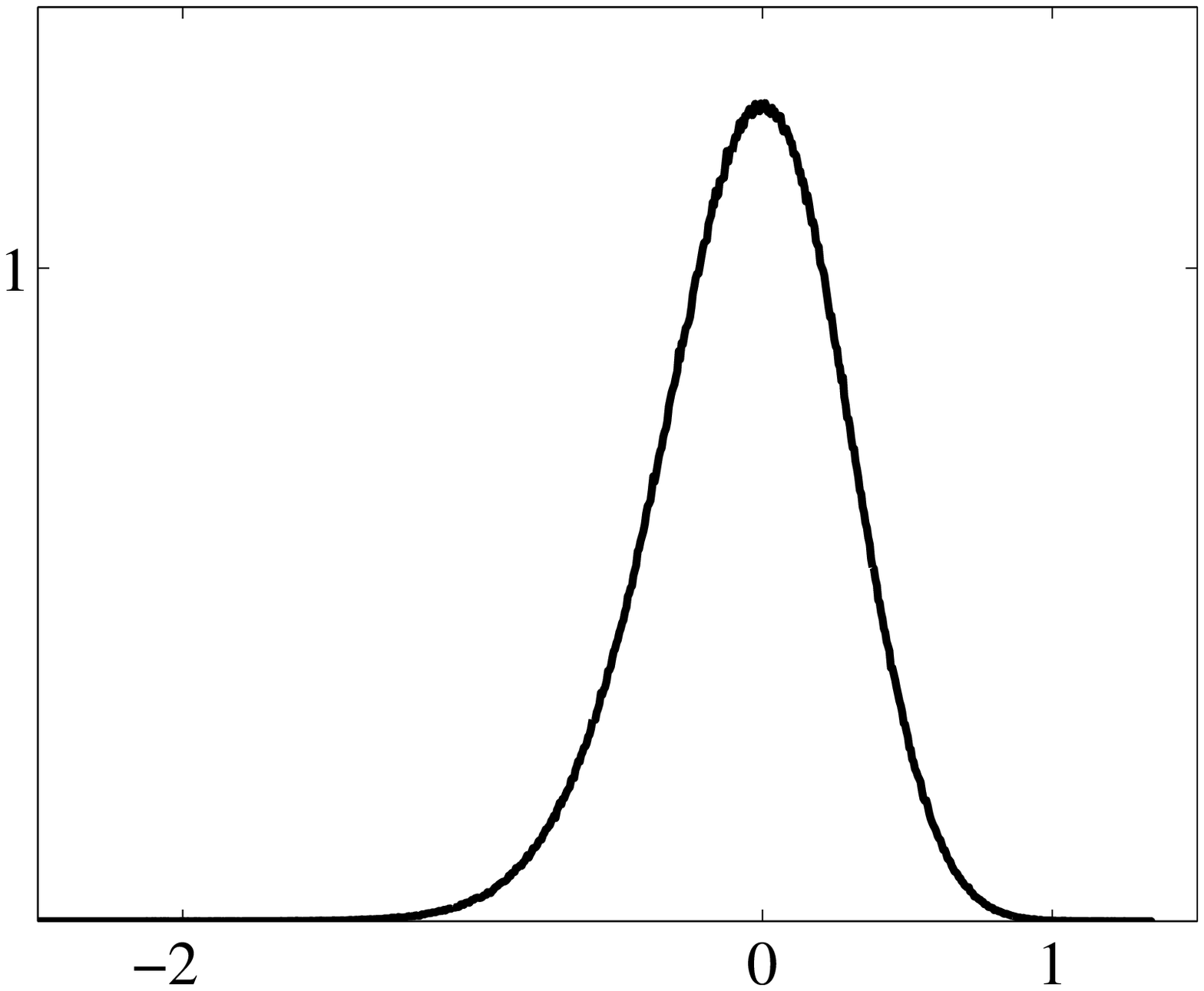,width=3.4cm}&\epsfig{figure=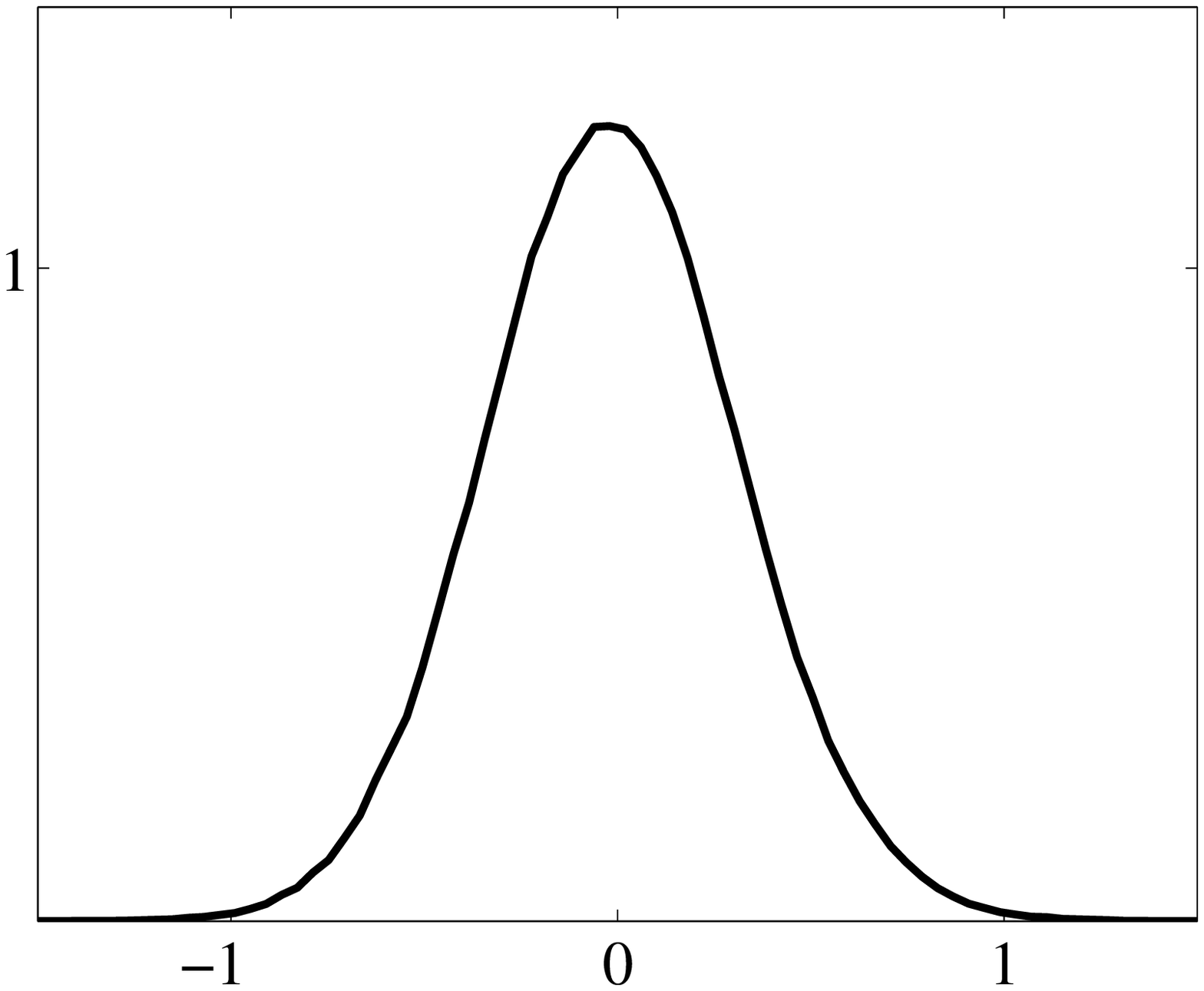,width=3.4cm} \\
{(a)~ $\eta_k$}&
{(b) ~$\eta = \frac{1}{K}\sum_{k=1}^K\eta_k$}&
{(c)~ $n = \log \eta$}&
{(d) ~$Wn$}
\end{tabular}
\end{center}
\caption{Noise distributions.}
\label{LAWS}
\efig

\subsection{Multiscale shrinkage for the log-data}\label{logdatamethods}

Many authors---see \cite{Fukuda98,Xie02,Achim01,Achim03,Pizurica06} and the references given there---focus
on restoring the log-data as given in  \eqref{logN}.
The common strategy is to decompose the log-data into some multiscale frame for $L^2(\RR^2)$, say
$\{\langle\w_i\rangle :i\in I\}$:
\beq y=\W v=\W u_0+\W n,\label{Wv}\eeq
where $\W$ is the corresponding frame  analysis operator, i.e. $(\W v)[i]=\langle v,\w_i\rangle$, $\forall i\in I$.
The rationale is that the noise $\W n$ in $y$ is
nearly Gaussian---as seen in Fig. \ref{LAWS}(d)---and justified by the Central Limit Theorem.
The obtained coefficients $y$ have been considered in different frameworks in the literature.
In a general way, coefficients are restored using shrinkage estimators using a symmetric
function $\T:\RR\to\RR$, thus yielding
\beq y_\T[i]=\T\big((\W v)[i]\big),~~~\forall i\in I.\label{aze1}\eeq
Following \cite{Donoho94}, various shrinkage estimators $\T$ have been explored in the literature,
\cite{Donoho95a,Wang95,Simoncelli96,Moulin99,Belge00,Antoniadis01};
see \S~\ref{Drawbacks} for more details on shrinkage methods.
 Shrinkage functions specially designed for multiplicative noise were proposed e.g. in \cite{Achim01,Xie02,Achim03}.

Let $\Wi$ be a left inverse of $\W$, giving rise to the dual frame $\{\wi_i:i\in I\}$.
Then a denoised log-image $v_\T$ is generated by  expanding the shrunk coefficients $y_\T$
in the dual frame:
\beq\label{vt} v_\T  =\sum_{i\in I} \T((\W v)[i])\,\wi_i
=\sum_{i\in I} \T(y[i])\,\wi_i.\eeq
Then the sought-after image is of the form $S_\T = \exp v_\T.$

\subsection{Our approach and organization of the paper}

We first apply \eqref{logN} and then consider a tight-frame transform of the log-data.
Our method to restore the log-image is presented in section \ref{log-image}.
It is based on the minimization of a criterion composed of an $\ell^1$-fitting to the (suboptimally)
hard-thresholded frame coefficients and a Total Variation (TV) regularization in the image domain.
This method uses some ideas from a previous work of some of the authors \cite{Durand07}.
The minimization scheme to compute the log-restored image, explained in  section \ref{minsch}, uses a
Douglas-Rachford  splitting specially adapted to our criterion.
Restoring the sought-after image from the restored log-image requires a bias correction which is
presented in section~\ref{resto-expo}.
The resultant algorithm to remove the multiplicative noise is provided  in section  \ref{DA}.
Various experiments are presented in section \ref{Exper}.
Concluding remarks are given in section~\ref{conclusion}.

\section{Restoration of the log-image}\label{log-image}

In this section we consider how to restore a good log-image given data $v:\Omega\to\RR$ obtained
according to~\eqref{logN}.
We focus basically on methods which, for a given preprocessed data set, lead to convex optimization problems.
Below we comment only variational methods and shrinkage estimators
since they underly the method proposed in this paper.

\subsection{Drawbacks of shrinkage restoration and variational methods}\label{Drawbacks}
\paragraph{Shrinkage restoration.}The major problems with shrinkage denoising methods, as sketched in
\eqref{aze1}-\eqref{vt}, is that shrinking large coefficients
entails an erosion of the spiky image features, while shrinking small coefficients towards zero
yields Gibbs-like oscillations in the vicinity of edges and a loss of texture information.
On the other hand, if shrinkage is not sufficiently strong, some coefficients bearing mainly
noise will remain almost unchanged---we call such coefficients {\em outliers}---and (\ref{vt})
suggests they generate artifacts with the shape of the functions $\wi_i$ of the frame.
A well instructive illustration can be seen in Fig.~\ref{diffT}(b-h).
Several improvements, such as translation invariant thresholding \cite{Coifman95} and block
thresholding \cite{Chesneau08}, were brought to shrinkage methods in order to alleviate these artifacts.
Results obtained using the latter method are presented in Figs.~\ref{fig:phantom}(c), \ref{fig:lena}(d) and
\ref{fig:boat}(d) in Section~\ref{Exper}.
Another  inherent difficulty comes from the fact that coefficients between different scales are
not independent, as usually assumed, see e.g. \cite{Simoncelli99,Moulin99,Belge00,Antoniadis02}.

\paragraph{Variational methods.}
In variational methods, the restored function is defined as the minimizer of a criterion
$\F_v$ which balances trade-off between closeness to data and regularity constraints,
\beq\F_v(u)= \rho\int_\O\psi\big(u(t),v(t)\big)dt+
\int_{\O}\phi(|\nabla u(t)|)\,dt,\label{Fr}\eeq
where $\psi:\RR_+\to\RR_+$ helps to measure closeness to data,
$\nabla$ stands for gradient (possibly in a distributional sense),
$\phi:\RR_+\to\RR_+$ is called a potential function and $\rho>0$ is a parameter.
A classical choice for $\psi$ is $\psi(u(t),v(t))=\big(u(t)-v(t)\big)^2$ which assumes that the noise
$n$ in \eqref{logN} is white, Gaussian and centered.
Given the actual distribution of the noise in \eqref{pdfn} and Fig. \ref{LAWS}(c),
this may seem hazardous; we reconsider this choice in \eqref{L2TV}.
A reasonable choice is to use the log-likelihood of $n$ according to \eqref{pdfn} and this
was involved in the criterion proposed in \cite{Ng08}---see \eqref{NgCrit} at the end of this paragraph.

Let us come to the potential function $\phi$ in the regularization term.
In their pioneering work, Tikhonov and Arsenin \cite{Tikhonov77} considered $\phi(t)=t^2$; however it
is well known that this choice for $\phi$ leads to smooth images with flattened edges.
Based on a fine analysis of the minimizers of $\F_v$ as solutions of PDE's on
$\Omega$, Rudin, Osher and Fatemi \cite{Rudin92} exhibited that $\phi(|\nabla u(t)|)=\|\nabla u(t)\|_2$,
where $\|.\|_2$ is the $L^2$-norm, leads to images
involving edges. The resultant regularization term is known as Total Variation (TV).
However, whatever smooth data-fitting is chosen, this regularization yields images
containing numerous constant regions (the well known stair-casing effect),
so that textures and fine details are removed, see \cite{Nikolova00b}.
The method in \cite{AubertAujol08} is of this kind and operates only  on the image domain; the fitting term is derived from
\eqref{Gamma} and the criterion reads
\beq \F_S(\Sigma)=\rho\int \left(\log \Sigma(t)+\frac{S(t)}{\Sigma(t)}\right)dt+\normtv{\Sigma},\label{AA}\eeq
where $\rho$ depends on $K$. The denoised image $\disp{ \h S_0 =\arg\min_\Sigma\F_S}$ exhibit constant regions, as seen in
Figs.~\ref{fig:lena}(e) and \ref{fig:boat}(e) in Section~\ref{Exper}.
We also tried to first restore the log-image $\h u$ by minimizing
\beq
F_v(u)=\rho\|u-v\|^2+\|u\|_{\rm TV}
\label{L2TV}
\eeq
and the sought after image is of the form $\h S_0=B \exp(\h u)$ where $B$ stands for the bias correction explained in
section \ref{resto-expo}. Because of the exponential transform, there is no stair-casing, but some outliers remain visible---see
Figs. \ref{fig:lena}(c) and \ref{fig:boat}(c); nevertheless, the overall result is very reasonable.
The result of  \cite{Rudin92} was at the origin of a large amount of papers
dedicated to constructing edge-preserving convex potential functions, see e.g. \cite{Acar94,Charbonnier97,Vogel96},
and for a recent overview, \cite{Aubert06}. Even though smoothness at the origin  alleviates
stair-casing, a systematic drawback of the images restored using all these functions
$\phi$ is that the amplitude of edges is underestimated---see e.g. \cite{Nikolova04a}.
This is particularly annoying if the sought-after function has neat edges or spiky areas since the later are
subjected to erosion. A very recent method proposed in \cite{Ng08} restores the discrete log-image
using the log-likelihood of \eqref{pdfn} and a regularized TV; more precisely,
\beq \F_v(u,w)=\sum_{i}\left(u[i]+S[i]e^{-u[i]}\right)+\rho_0\|u-w\|^2+\rho\normtv{w}\label{NgCrit},\eeq
where the denoised log-image $\h u$ is obtained using alternate minimization on $u$ and $w$.
The TV term here is regularized via $\|u-w\|^2_2$ and the resultant denoised image is given by $\h S_0=\exp(\h w)$.
The results present some improvement with respect to the method proposed in  \cite{AubertAujol08},
at the expense of two regularization parameters ($\rho$ and $\rho_0$) and twho stopping rules for each one of
the minimization steps.

\subsection{Hybrid methods} \label{hybrid}

Hybrid methods \cite{Bijaoui97,Coifman00,Chan00p,Froment01,Malgouyres02,Malgouyres02a,Candes02,Durand03a}
combine the information contained in the large coefficients $y[i]$, obtained according to
\eqref{Wv}, with pertinent priors directly on the log-image $u$.

\begin{remark} Such a framework is particularly favorable in our case since the noise $Wn[i],~i\in I$
in the coefficients
$y[i],~i\in I$, have a nearly Gaussian distribution---see Fig.~\ref{LAWS}(d).
\end{remark}

Although based on different motives, hybrid methods amount to define the restored
function $\h u$ as
\[ \mbox{minimize}~~\Phi(u)\]
\[\mbox{subject to}~~
\h u\in \left\{u:\;\left|\left(\W(u-v)\right)[i]\right|\leq\mu_i,\,\forall i\in I\right\}.\]
If the use of an edge-preserving regularization, such as TV for $\Phi$ is a pertinent choice,
the strategy for the selection of parameters $\{\mu_i\}_{i\in J}$ is
more tricky.
This choice must take into account
the magnitude of the relevant data coefficient $y[i]$.
However, deciding on the value of $\mu_i$ based solely on $y[i]$, as done in these papers,
is too rigid since there are either correct data coefficients that incur smoothing ($\mu_i>0$),
or noisy coefficients that are left unchanged ($\mu_i=0$).
A way to alleviate this situation is to determine $(\mu_i)_{i\in I}$ based both on
the data and on a prior regularization term.
Following \cite{Nikolova01a,Nikolova03a}, this objective is carried
out by defining restored coefficients $\h x$ to minimize the non-smooth objective function, as explained below.

\subsection{A specialized hybrid criterion}\label{proposed}
Given the log-data $v$ obtained according to \eqref{logN}, we first apply a frame transform as in \eqref{Wv} to get
$y=\W v=\W u_0+\W n$.
We systematically denote by $\h x$ the denoised coefficients.
The noise contained in the $i$-th datum reads $\<n,\w_i\>$ whose distribution is
displayed in Fig.~\ref{LAWS}(d).
The low frequency approximation coefficients carry important information about the image.
In other words, when $\w_i$ is low frequency, then $\<n,\w_i\>$ has a better SNR than other coefficients.
Therefore, as usual, a good choice is to keep them intact at this preprocessing stage.
Let  $I_*\subset I$ denote the subset of all such elements of the frame.
Then we apply a hard-thresholding to all coefficients except those contained in $I_*$
\beq y_\HT[i]\defeq\HT\big(y[i]\big),~~~\forall i\in I\setminus I_*,\label{aze}\eeq
where the  hard-thresholding operator $\HT$ reads \cite{Donoho94}
\beq\label{hard}\HT(t)=\left\{\barr{ll}0&\mbox{if}~~|t|\leq
T,\\t&\mbox{otherwise.}\earr\right.\eeq
The resultant set of coefficients is systematically denoted by $y_\HT$.
We choose an {\sl underoptimal} threshold $T$ in order to preserve
as much as possible the information relevant to edges and to textures, an important part of which is
contained in the small coefficients.
Let us consider
\beq v_\HT  =\sum_{i\in I_1} \W v[i]\,\wi_i, \label{bze}\eeq
where
\beq I_1=\{i\in I\setminus I_*:\,|y[i]|> T\}\label{I1}.\eeq
The image $v_\HT$ contains a lot of artifacts with the shape of the $\wi_i$ for those $y[i]$ that
are noisy but above the threshold $T$, as well as a lot of information about the fine details in the
original log-image $u_0$. In all cases, whatever the choice of $T$, the image of the form
$v_\HT$ is unsatisfactory---see Fig.~\ref{diffT} (b-h).

\begin{figure}
\begin{tabular}{cccc}
\includegraphics[width=0.23\textwidth]{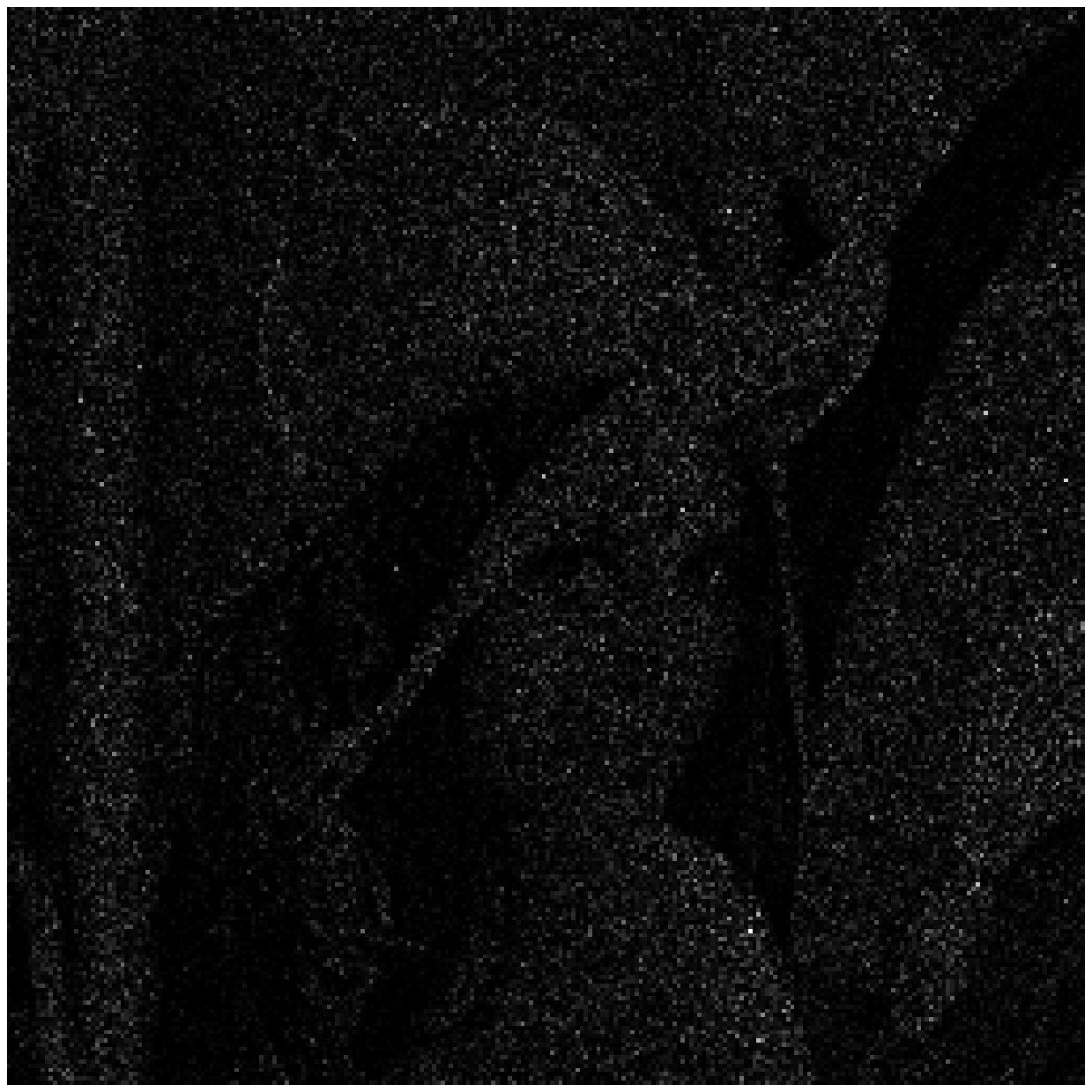}&
\includegraphics[width=0.23\textwidth]{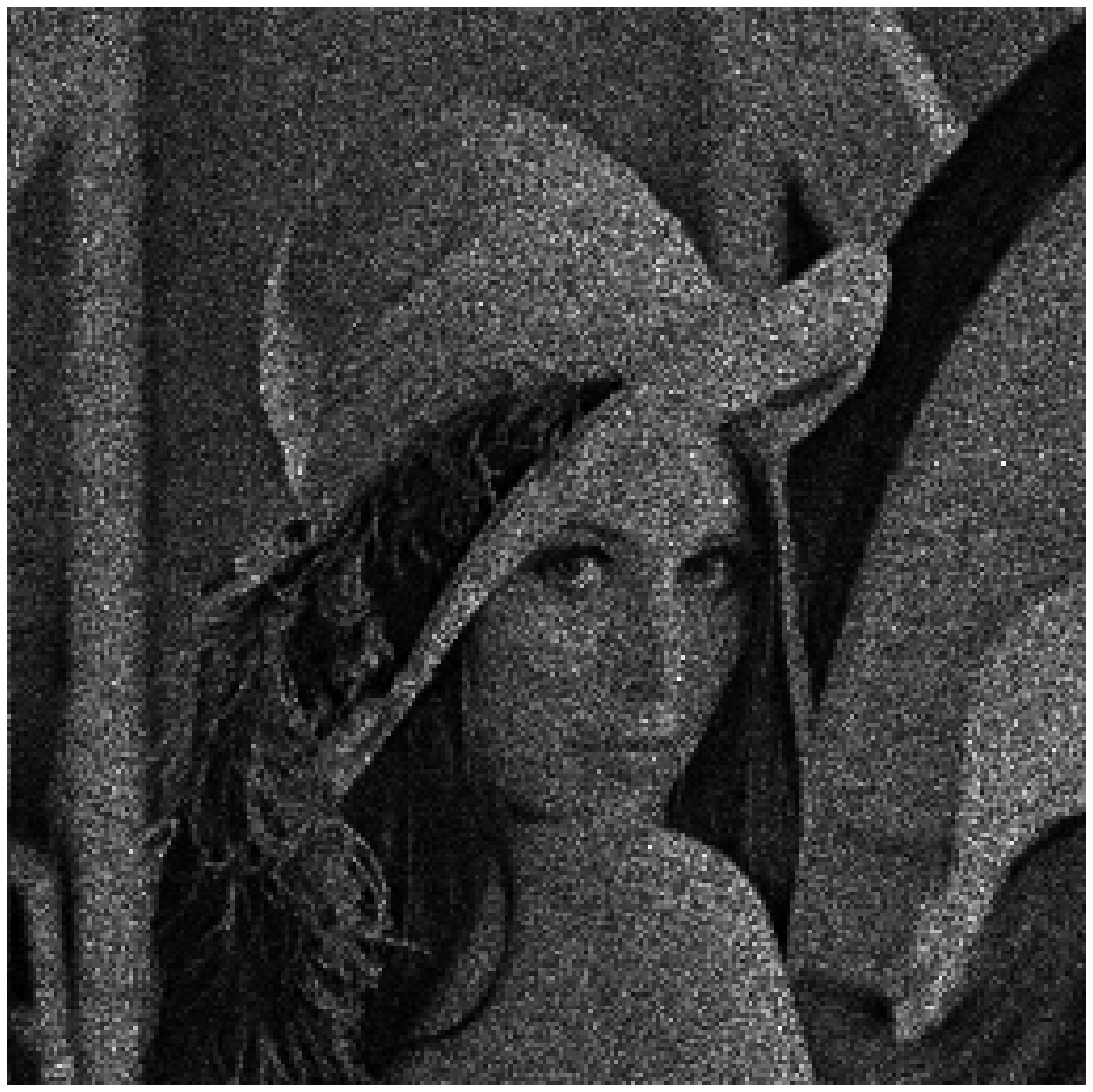}&
\includegraphics[width=0.23\textwidth]{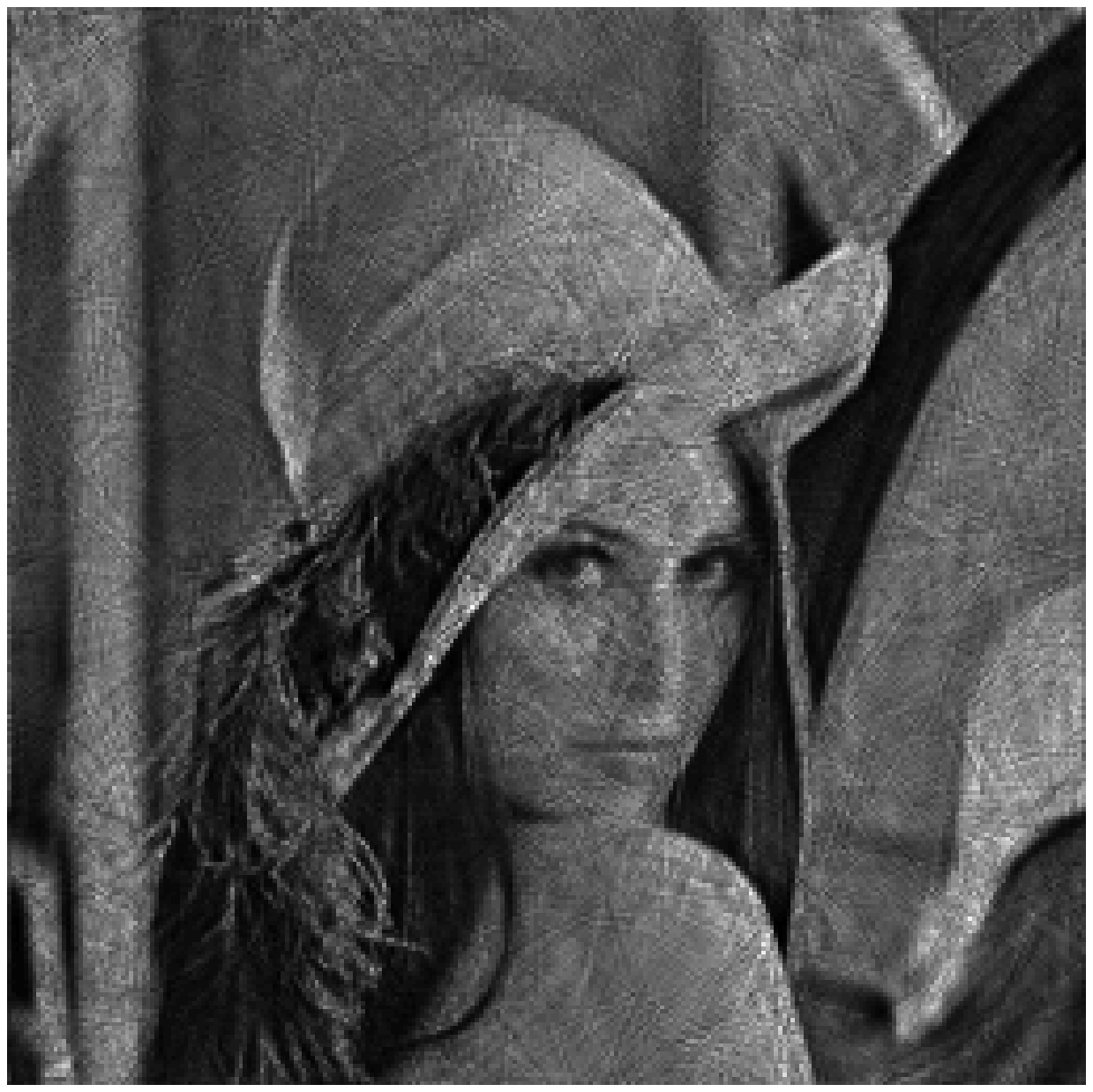}&
\includegraphics[width=0.23\textwidth]{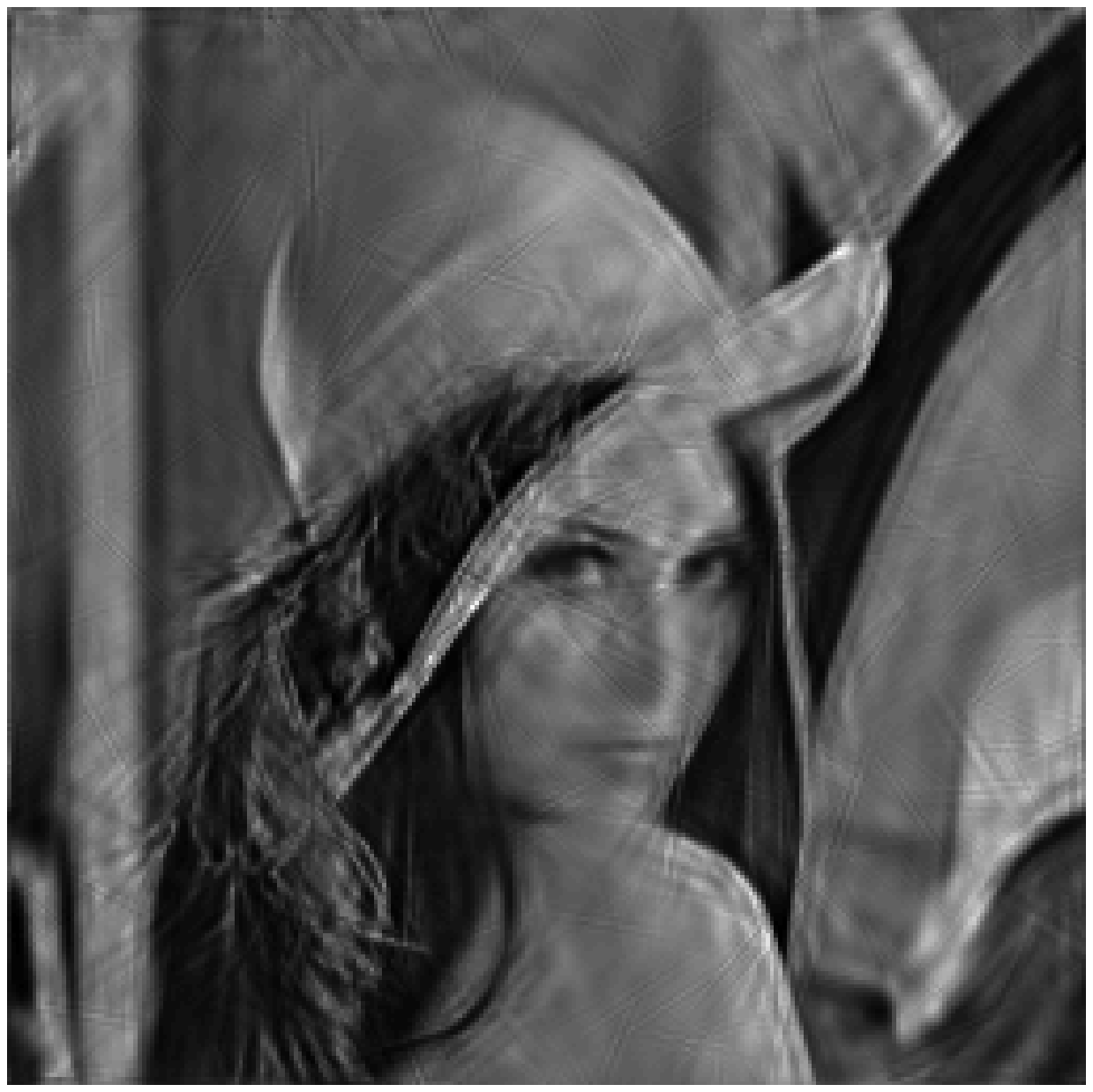}\\
{(a)~Noisy for $K=1$ }&{(b) Noisy for $K=10$}&{(c) ~$T=2\sigma$} & {(d) ~$T=3\sigma$}\\
\includegraphics[width=0.23\textwidth]{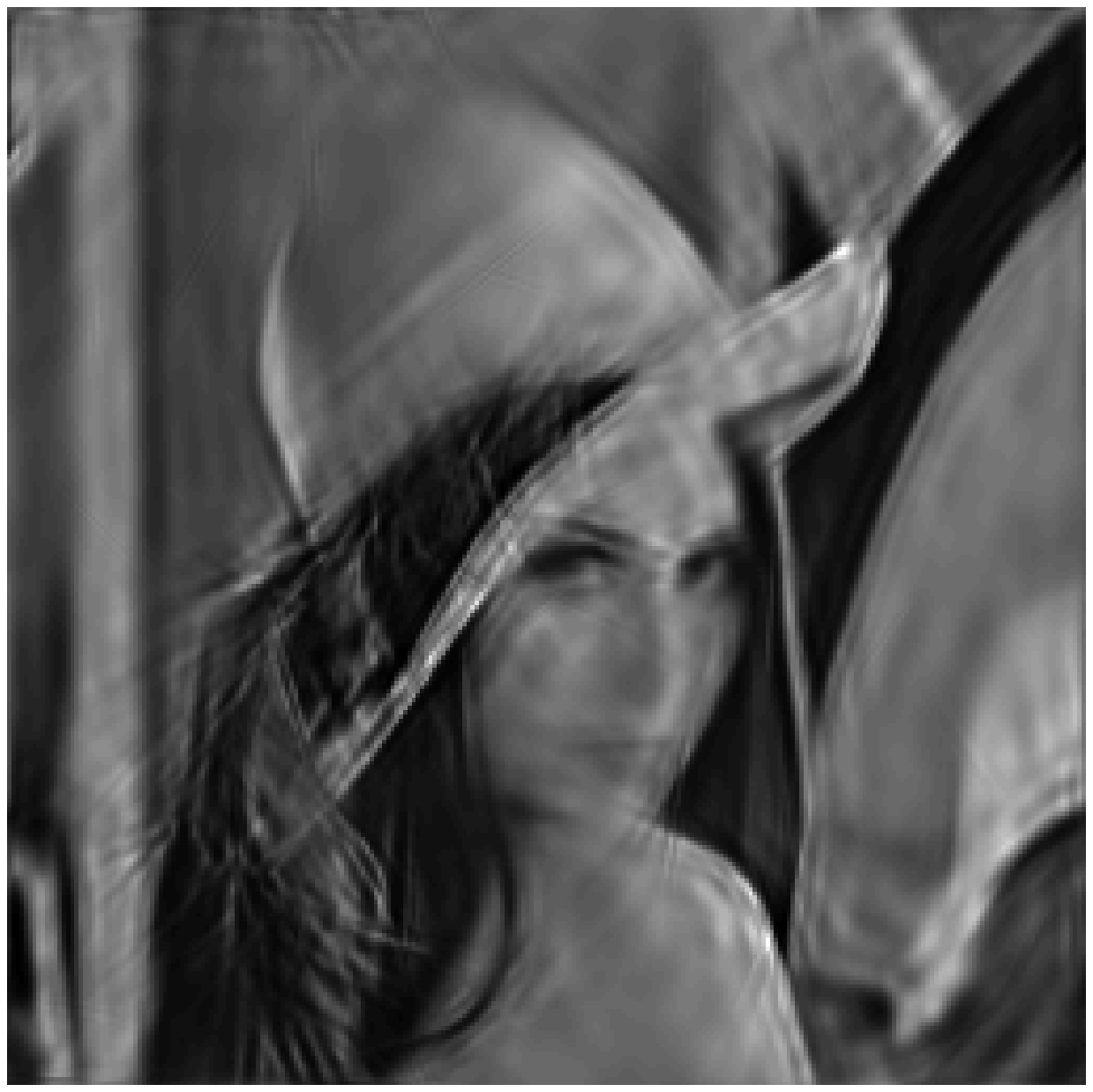}&
\includegraphics[width=0.23\textwidth]{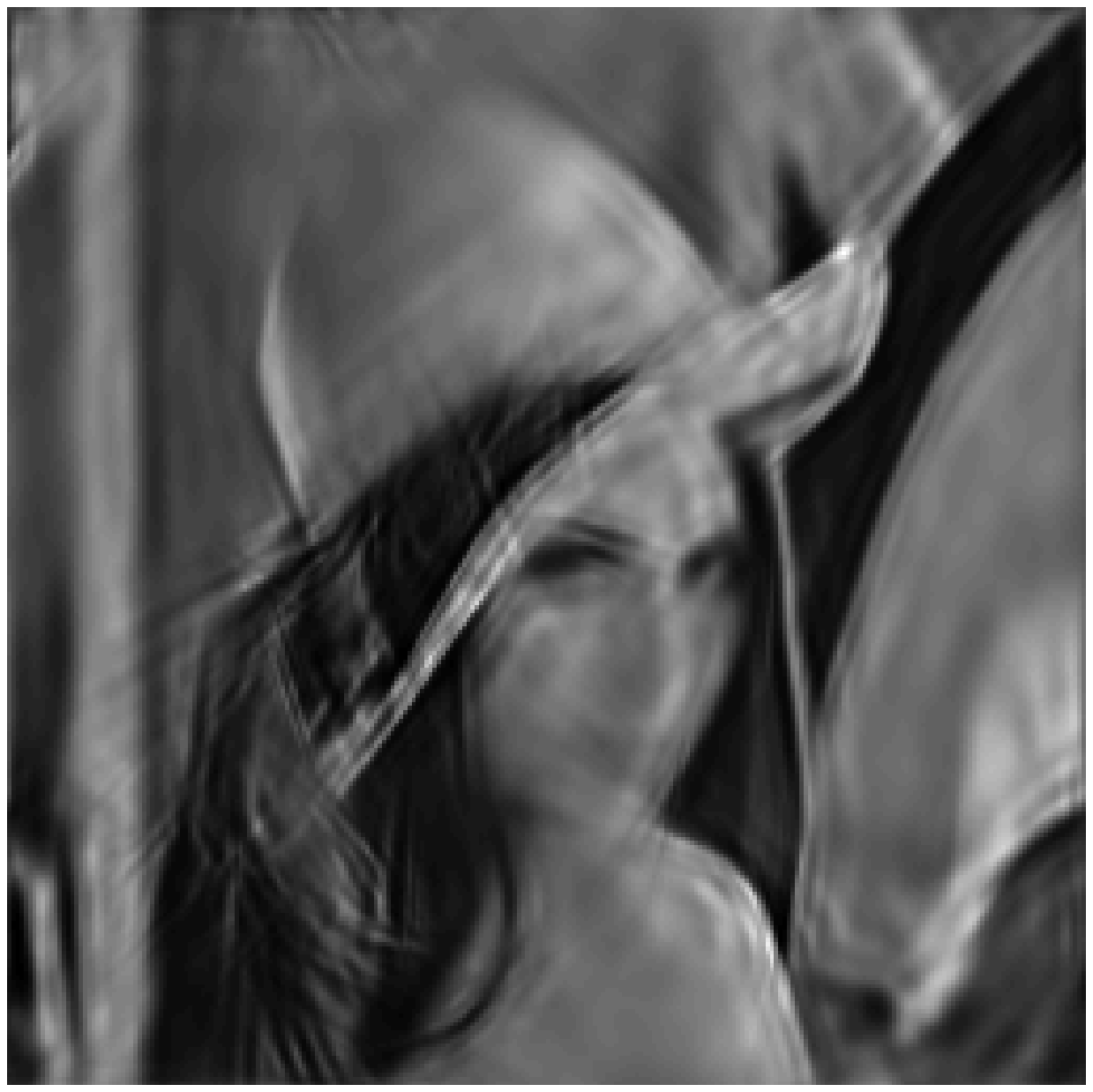}&
\includegraphics[width=0.23\textwidth]{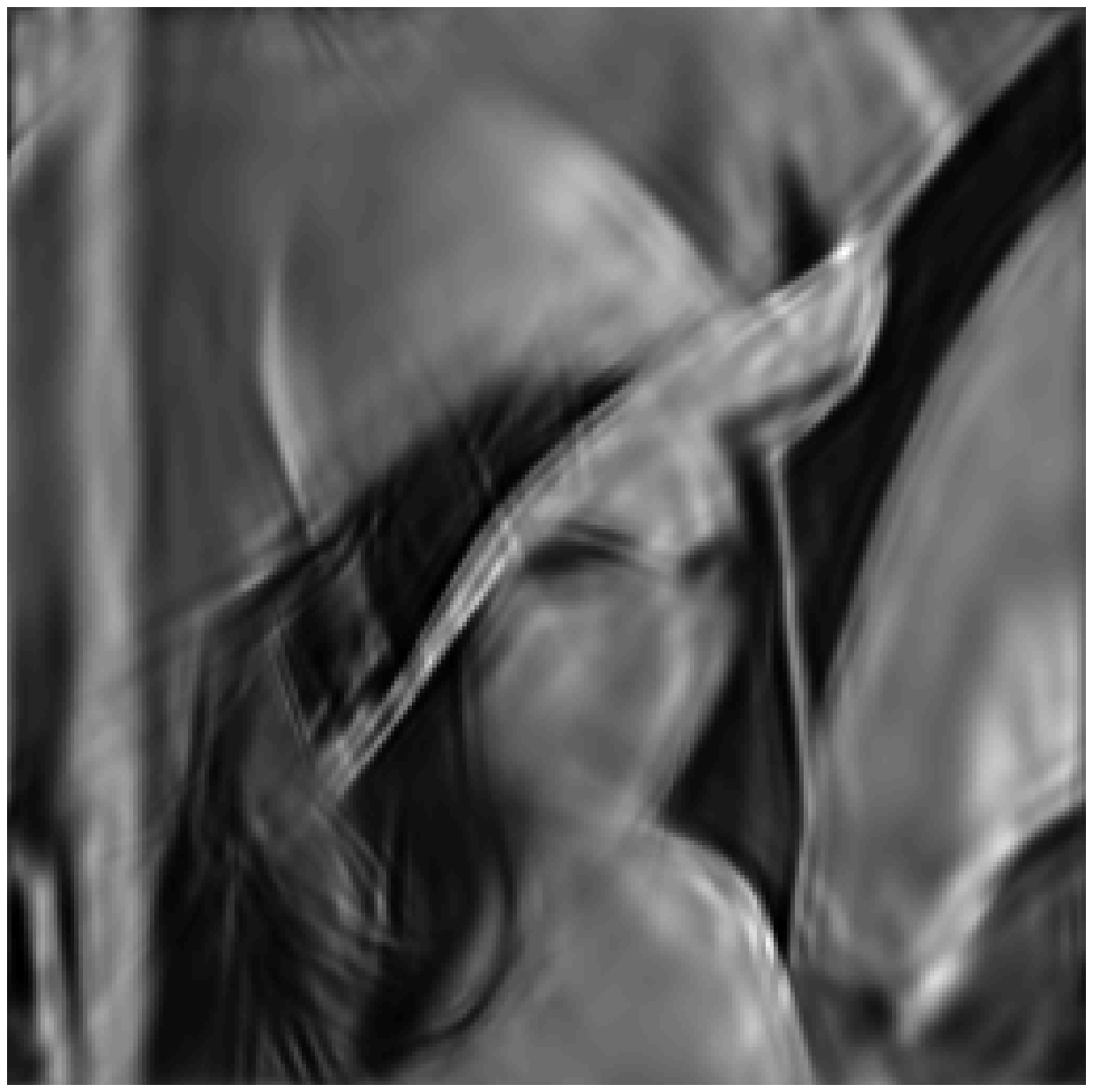}&
\includegraphics[width=0.23\textwidth]{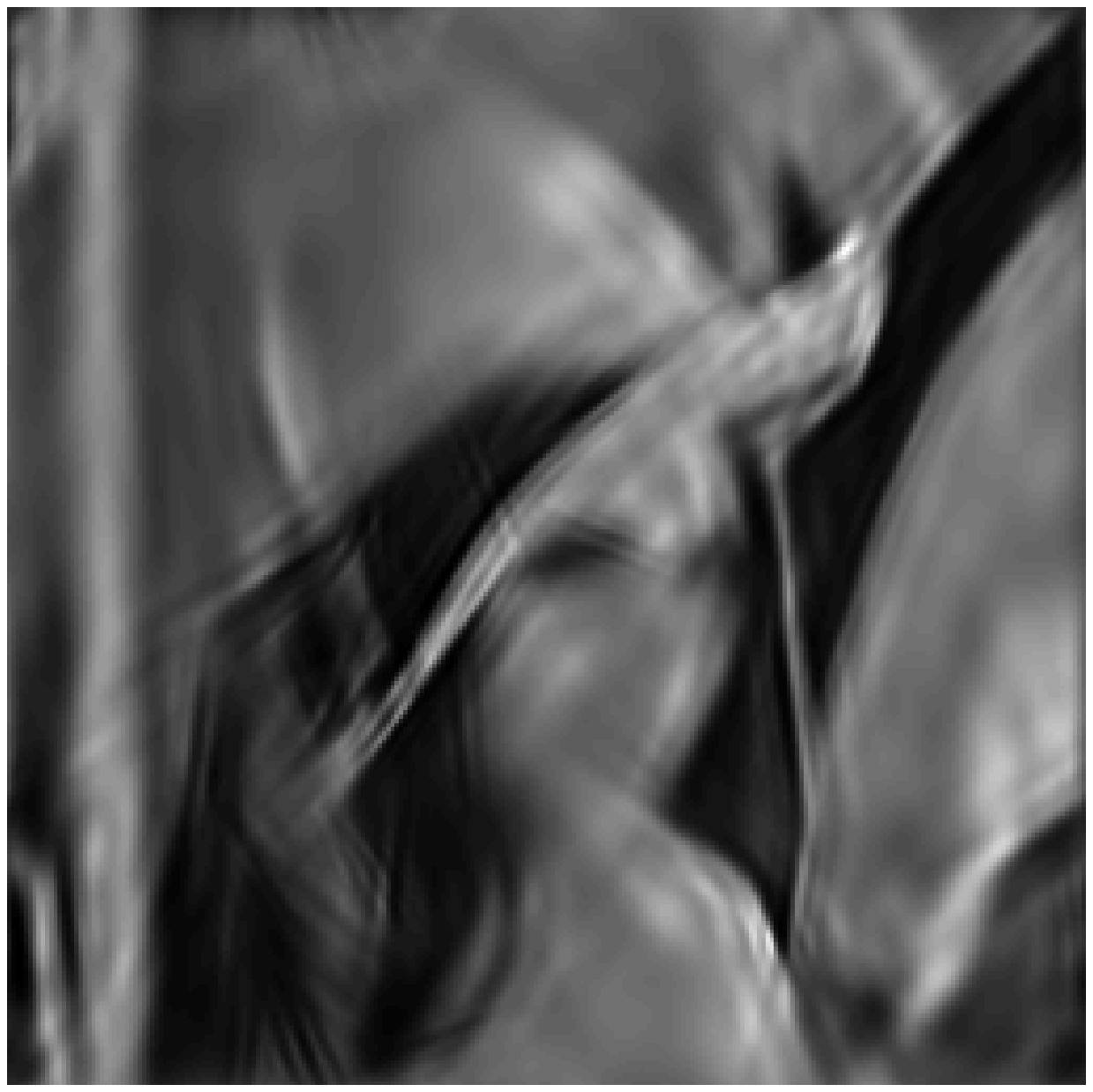}\\
{(e) ~$T=4\sigma$}&{(f) ~$T=5\sigma$}&{(g) ~$T=6\sigma$}&{(h) ~$T=8\sigma$} \\
\end{tabular}
\caption{(a) Noisy Lena for $K=1$. (b) Noisy Lena obtained via averaging, see \eqref{S_k}, for $K=10$.
(c)-(h) Denoising of data $v$ shown in (c) where
the curvelet trasform of $v$ are hard-thresholded
according to \eqref{aze}-\eqref{bze} for different choices of $T$ where
$\sigma=\sqrt{\psi_1(K)}$ is the standard deviation of the noise $n$. The displayed restorations correspond to $\exp{v_\HT}$,
see \eqref{bze}.}
\label{diffT}
\end{figure}

We will restore $\h x$ based on the under-thresholded data $y_\HT$.
We focus on hybrid methods of the form:
\beq \left\{ \barr{lll}\h x&=&\disp{\arg\min_x F(x)}\\~\\
F(x)&=&\disp{\Psi(x,y_\HT)+\Phi(\Wi x),}
\earr\right.
\label{FX}
\eeq
where $\Psi$ is a data-fitting term in the domain of the frame coefficients and $\Phi$
is an edge-preserving regularization term bearing the prior on the sought-after log-image $\h u$.
The latter sought-after log-image  $\h u$ reads
\beq\h u=\Wi \h x~.\label{logimagefine}\eeq

Next we analyze the information content of the coefficients $y_\HT$ that give rise to our log-image $\h u$.
Let us denote
\beq I_0=I\setminus\big(I_1\cup I_* \big)=\{i\in I\setminus I_*:\,|y[i]|\leq T\}.
\label{I0}\eeq
We are mostly interested by the information borne by the coefficients relevant to $I_0$ and $I_1$.
\ben
\item[($I_0$)]
The coefficients $y[i]$ for $i\in I_0$ are usually high-frequency components
which can be of the two types described below.
\ben
\item Coefficients $y[i]$ containing essentially noise---in which case the best we can do is to
keep them null, i.e. $\h x[i]=y[i]$;

\item Coefficients $y[i]$ which correspond to edges and other details in $u_0$.
Since $y[i]$ is difficult to distinguish from the noise, the relevant $\h x[i]$ should be restored
using the edge-preserving prior conveyed by $\Phi$.
Notice that a careful restoration must find a nonzero $\h x[i]$, since otherwise
$\h x[i]=0$ would generate Gibbs-like oscillations in $\h u$.
\een

\item[($I_1$)] The coefficients $y[i]$ for
$i\in I_1$ are of the following two types:
\ben
\item Large coefficients which carry the main features of the sought-after function.
They verify $y[i]\approx\<\w_i,u_0\>$ and can be kept intact.
\item Coefficients which are highly contaminated by noise, characterized by $|y[i]|\gg|\<\w_i,u_0\>|$.
We call them {\em outliers} because if we had $\h x[i]=y[i]$, then $\h u$ would contain an artifact
with the shape of $\wi_i$ since by \eqref{bze} we get
 $v_\HT=\sum_{j\setminus i}\h x[j]\wi_j+y[i]\wi_i$.
Instead, $\h x[i]$ must be restored according to the prior  $\Phi$.
\een
\een
This analysis clearly defines the goals that the minimizer $\h x$ of $F_y$ is expected to achieve.
In particular, $\h x$ must involve an implicit classification between coefficients that  fit
to $y_\HT$ exactly and coefficients that are  restored according to the prior term $\Phi$.
In short, restored coefficients have to fit $y_\HT$ exactly if
they are in accordance with the regularization term $\Phi$ and have to be restored via the later term otherwise.
Since \cite{Nikolova01a,Nikolova03a} we know that criteria $F_y$ where $\Psi$ is non-smooth
at the origin (e.g. $\ell^1$) can satisfy $\h x[i]=y_\HT[i]$ for coefficients that are in accordance
with the prior $\Phi$, while the others coefficients are restored according to $\Phi$, see also \cite{Durand07}.
For these reasons, we focus on a criterion on the form
\beq\label{FoF}
F_y(x)= \Psi(x) + \Phi(x) \eeq
where
\beqn\Psi(x)&=& \sum_{i\in I }\lambda_i\big|(x-y_\HT)[i]\big|~~=~~
    \sum_{i\in I_1\cup I_*}\lambda_i\left|(x-y)[i]\right|+\sum_{i\in I_0}\lambda_i\left|x[i]\right|,\label{Psio}\\
\Phi(x)&=& \int_\O|\nabla\Wi x|\,ds ~~=~~ \|\Wi x\|_{\mbox{\footnotesize TV}}.\label{Phio}
\eeqn

Note that in \eqref{FoF}, as well as in what follows, we write $F_y$ in place of $F_{y_\HT}$ in order to simplify the notations.

In the pre-processing step \eqref{hard} we would not recommend the use of a shrinkage function other than $\HT$
since it will alter {\em all} the data coefficients $y_\T$, without restoring them faithfully.
In contrast, we base our restoration on data $y_\HT$ where all non-thresholded coefficients keep
the original information on the sought-after image.

The theorem stated next ensures the existence of a minimizer for $F_y$ as defined in \eqref{FoF} and
\eqref{Psio}-\eqref{Phio}.
Its proof can be found in \cite{Durand07}.

\begin{theorem}{\rm\cite{Durand07}}\label{exist}
For $y\in\ell^2(I)$ and $T>0$ given, consider $F_y$ as defined in \eqref{FoF},
where $\O\in\RR^2$ is open, bounded and its boundary $\partial\Omega$ is Lipschitz.
Suppose that
    \ben

\item $\{\w_i\}_{i\in I}$ is a frame of $L^2(\O)$ and the operator $\Wi$ is the pseudo-inverse of $\W$; \label{F1}
    \item $\disp{\lambda_{\min}=\min_{i\in I}\lambda_i>0}$. \label{F3}
    \een
Then $F_y$ has a minimizer in $\ell^2(I)$.
\end{theorem}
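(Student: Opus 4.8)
The plan is to establish existence by the direct method of the calculus of variations: show that $F_y$ is proper, bounded below, and lower semicontinuous along minimizing sequences, and that a minimizing sequence admits a subsequence converging (in a suitable sense) to a point at which the lower semicontinuity applies. Since $F_y = \Psi + \Phi$ with $\Psi(x)\geq 0$ and $\Phi(x)=\|\Wi x\|_{\mathrm{TV}}\geq 0$, the functional is nonnegative, hence bounded below; it is proper because, e.g., $F_y(y_\HT)<\infty$ (the frame coefficients of a fixed $L^2$ function are square-summable, and $\Wi y_\HT\in L^2(\O)$ has finite total variation only if... — this point needs care, see below). So let $(x^k)_{k}\subset\ell^2(I)$ be a minimizing sequence, $F_y(x^k)\to\inf F_y=:m<\infty$.

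First I would extract coercivity-type bounds. From $F_y(x^k)\leq m+1$ for $k$ large we get two pieces of information: (i) $\sum_{i\in I}\lambda_i|(x^k-y_\HT)[i]|\leq m+1$, and since $\lambda_{\min}>0$ this gives a uniform $\ell^1(I)$ bound on $x^k-y_\HT$, hence a uniform $\ell^1(I)$— and therefore $\ell^2(I)$— bound on $x^k$; thus $(x^k)$ is bounded in $\ell^2(I)$ and, after passing to a subsequence, $x^k\wto x^*$ weakly in $\ell^2(I)$. (ii) $\|\Wi x^k\|_{\mathrm{TV}}\leq m+1$ uniformly. Next I would pass to the limit in each term. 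For $\Psi$: weak $\ell^2$ convergence of $x^k$ implies coordinatewise convergence $x^k[i]\to x^*[i]$ for each $i$, so by Fatou's lemma applied to the counting measure, $\Psi(x^*)\leq\liminf_k\Psi(x^k)$. For $\Phi$: since $\Wi:\ell^2(I)\to L^2(\O)$ is bounded (pseudo-inverse of a frame analysis operator), $x^k\wto x^*$ gives $\Wi x^k\wto \Wi x^*$ weakly in $L^2(\O)$; combined with the uniform TV bound, $(\Wi x^k)$ is bounded in $BV(\O)$ and weakly-$*$ precompact there, and the limit must coincide with the weak $L^2$ limit $\Wi x^*$, so $\Wi x^*\in BV(\O)$ and by weak-$*$ lower semicontinuity of total variation, $\|\Wi x^*\|_{\mathrm{TV}}\leq\liminf_k\|\Wi x^k\|_{\mathrm{TV}}$. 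Adding the two inequalities yields $F_y(x^*)\leq\liminf_k F_y(x^k)=m$, so $x^*$ is a minimizer.

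The main obstacle I expect is the properness / finiteness issue for $\Phi$ and the interaction between the $\ell^2$ topology on coefficients and the $BV$ topology in the image domain — specifically, verifying that $\inf F_y<\infty$ (one needs at least one $x$ with $\Wi x\in BV(\O)$, which should follow by taking $x$ corresponding to a smooth truncation of the data, but must be checked against the frame structure), and confirming that a uniform TV bound together with weak $L^2$ convergence of $\Wi x^k$ genuinely forces the $BV$ weak-$*$ limit to agree with $\Wi x^*$ rather than differ by an object with no $L^2$ representative. The Lipschitz regularity of $\partial\Omega$ enters here, guaranteeing compact embedding $BV(\O)\hookrightarrow L^1(\O)$ and the good functional-analytic behaviour of $BV(\O)$; on a bounded domain this compact embedding also lets one identify the limit unambiguously. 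Since the paper attributes the full proof to \cite{Durand07}, I would at this level of detail simply reference that argument for the $BV$ bookkeeping and present the weak-compactness-plus-lower-semicontinuity skeleton above as the conceptual route.
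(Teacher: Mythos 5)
Your direct-method argument is correct and is essentially the proof of the cited reference \cite{Durand07}: coercivity in $\ell^2(I)$ follows from $F_y(x)\geq\lambda_{\min}\|x-y_\HT\|_1\geq\lambda_{\min}\|x-y_\HT\|_2$, a minimizing sequence therefore has a weak $\ell^2$ cluster point, and both convex terms are weakly lower semicontinuous (Fatou for $\Psi$, duality for the TV term). The two obstacles you flag dissolve on inspection: properness holds already at $x=0$, since $\Phi(0)=0$ and $\Psi(0)=\sum_i\lambda_i|y_\HT[i]|$ is a finite sum ($I_1$ is finite because $y\in\ell^2(I)$ and $T>0$, and $I_*$ is the finite set of coarse-scale indices); and no $BV$ weak-$*$ limit identification is needed because $\normtv{\cdot}$ is by definition a supremum of the linear functionals $u\mapsto\int_\O u\,\mathrm{div}(\phi)\,dt$ with $\mathrm{div}(\phi)\in L^2(\O)$ for $\O$ bounded, hence is lower semicontinuous directly for the weak $L^2(\O)$ convergence $\Wi x^k\wto\Wi x^*$ supplied by the boundedness of $\Wi$.
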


Let us remind that the minimizer of $F_y$ is not necessarily unique.
Given $y$, denote
\beq G_y\defeq\big\{\h x\in \ell^2(I)~:~ F_y(\h x)=\min_{x\in\ell^2(I)} F_y(x)\big\}.\label{G}\eeq
Hopefully, for every sample of the preprocessed data $y_\HT$, the set $G_y$ is convex and corresponds
to images $\Wi\h x$ which are very similar since they share the same level lines.
The theorem below confirms this assertion and is proven in \cite{Durand07}.

\begin{theorem}{\rm\cite{Durand07}}\label{unique}
If $\h x_1$ and $\h x_2$ are two minimizers of $F_y$ (i.e. $\h x_1\in G_y$ and $\h x_2\in G_y$), then
\[ \nabla\Wi\h x_1  \propto \nabla\Wi\h x_2,~~~~~~~~\mbox{a.e. on }\Omega. \]
In other words, $\Wi \h x_1$ and $\Wi \h x_2$ have the same level lines.
\end{theorem}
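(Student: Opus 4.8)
The plan is to exploit convexity of $F_y$ together with the strict convexity of the total variation seminorm along the "direction transversal to level lines'', following the standard argument for uniqueness of level lines of TV-regularized functionals (as in Chambolle–Lions type results). First I would record that $F_y$ is convex, so if $\h x_1,\h x_2\in G_y$ with common minimal value $m$, then the whole segment $\h x_t=(1-t)\h x_1+t\h x_2$ lies in $G_y$ for $t\in[0,1]$, i.e. $F_y(\h x_t)=m$ for all such $t$. Because $\Psi$ and $\Phi$ are each convex and $F_y=\Psi+\Phi$, the equality $F_y(\h x_t)=(1-t)F_y(\h x_1)+tF_y(\h x_2)$ forces affinity of $t\mapsto\Psi(\h x_t)$ and of $t\mapsto\Phi(\h x_t)$ separately on $[0,1]$. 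So both $\Psi$ and $\Phi$ are affine along the segment joining $\h x_1$ and $\h x_2$.

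Next I would focus on $\Phi(x)=\|\Wi x\|_{\mathrm{TV}}$. Writing $u_j=\Wi\h x_j$, affinity of $t\mapsto\|\Wi\h x_t\|_{\mathrm{TV}}=\|(1-t)u_1+tu_2\|_{\mathrm{TV}}$ together with the triangle inequality gives the equality case
\[
\|u_1+u_2\|_{\mathrm{TV}}=\|u_1\|_{\mathrm{TV}}+\|u_2\|_{\mathrm{TV}}.
\]
Now I would invoke the measure-theoretic structure of $BV$: decomposing $Du_1$ and $Du_2$ (absolutely continuous plus singular parts, the a.c.\ part being $\nabla u_j\,ds$), the equality in the triangle inequality for the total variation of vector measures forces $Du_1$ and $Du_2$ to have the same polar (Radon–Nikodym direction with respect to $|Du_1|+|Du_2|$) $|Du_1|+|Du_2|$-a.e. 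Restricting to the absolutely continuous part with respect to Lebesgue measure yields $\nabla u_1(s)$ and $\nabla u_2(s)$ positively proportional (one of them possibly vanishing) at a.e.\ $s\in\Omega$, which is exactly $\nabla\Wi\h x_1\propto\nabla\Wi\h x_2$ a.e. The standard interpretation is that where gradients are nonzero they point in the same direction, so the level lines coincide; where one gradient vanishes there is no level line to speak of, and the proportionality statement is vacuously compatible.

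The main obstacle is the rigorous handling of the equality case in the triangle inequality for $BV$ functions, i.e. justifying that $\|u_1+u_2\|_{\mathrm{TV}}=\|u_1\|_{\mathrm{TV}}+\|u_2\|_{\mathrm{TV}}$ implies the pointwise ($|Du_1|+|Du_2|$-a.e.) alignment of the two gradient measures. This requires the polar decomposition of $\RR^2$-valued Radon measures and a pointwise convexity argument (the Euclidean norm on $\RR^2$ is strictly convex away from the origin, so equality in $|a+b|=|a|+|b|$ forces $a\parallel b$ with the same orientation). One must also be slightly careful to isolate the absolutely continuous part $\nabla u_j\,ds$, since Theorem \ref{unique} only claims proportionality of the \emph{gradients} (i.e. densities) a.e., not of the singular parts; this is harmless because restricting an a.e.\ statement for $|Du_1|+|Du_2|$ to its Lebesgue-a.c.\ part preserves it. Since the full details are in \cite{Durand07}, I would present the convexity reduction and the equality-case analysis and refer there for the $BV$ technicalities.
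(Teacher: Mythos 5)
The paper does not reproduce a proof of Theorem~\ref{unique}; it defers entirely to \cite{Durand07}, and your argument---convexity of the minimizer set $G_y$, hence affinity of the TV term along the segment joining $\h x_1$ and $\h x_2$, then the equality case of the triangle inequality for the gradient measures combined with the strict convexity of the Euclidean norm on $\RR^2$ away from the origin---is exactly the standard route taken in that reference. The proposal is correct, including the care taken to pass from the $\left(|D u_1|+|D u_2|\right)$-a.e.\ alignment of the polars to the Lebesgue-a.e.\ proportionality of the absolutely continuous densities $\nabla \Wi\h x_1$ and $\nabla \Wi\h x_2$.
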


In words, images $\Wi \h x_1$ and $\Wi \h x_2$ are obtained one from another by a local change of contrast
which is usually invisible for to the naked eye.

Some orientations for the choice of $\lambda_i$ were investigated in \cite{Durand07}.
If $i\in I_1$, the parameter $\lambda_i$ should be close to, but below the upper bound
$\|\wi_i\|_{\mbox{\footnotesize TV}}$, since above this bound, the coefficients $y[i]$ cannot be changed. For $i\in I_0$, a
reasonable choice is
\[ \lambda_i =  \max_{k\neq i}\left|\int_{\Omega}\left(\nabla \wi_i\right)^T
\frac{\nabla\wi_k}{\left|\nabla \wi_k\right|}\,ds\right| ~,
\]
where $.^T$ stands for transposed. If $\lambda_i$ is below this bound, some neighboring outliers might not be properly removed although Gibbs oscillations are better reduced. Another important remark is that, for some multiscale
transforms, the bounds discussed above are constant. In the proposed model, we use
only two values for $\lambda_i$, depending only on the set
$I_\epsilon$ the index $i$ belongs to.

We focus on the coefficients of a curvelets transforms of the log-data because (a) such a transform
captures efficiently the main features of the data and (b) it is a tight-frame which is helpful for the
subsequent numerical stage.

\section{Minimization for the log-image}\label{minsch}
\label{MinimAlgo}
Let us rewrite the minimization problem defined in \eqref{FoF} and \eqref{Psio}-\eqref{Phio} in a more compact form:
find $\h x$ such that $F_y(\h x)=\min_x F_y$ for
\beq
\label{minFoF}\barr{lll}
&&\min_{x} ~ F_y=\Psi+\Phi,\\~\\
\mbox{where}&&\left\{\barr{lll}
\Psi(x)&=&\disp{\norm{\Lambda(x-y_\HT)}_1,~~\mbox{for}~~\Lambda=\mathrm{diag}(\lambda_i)_{i \in I}},\\
\Phi(x)&=&\disp{\normtv{\Wi x}}.
\earr
\right.\earr\eeq
where $\lambda_i$ are the coefficients given in \eqref{Psio}.
Clearly, $\Psi$ and $\Phi$ are proper lower-semicontinuous convex functions, hence the same holds true for $F_y$.
The set $G_y$ introduced in \eqref{G} is non-empty by Theorem \ref{exist} and can be rewritten as
\[G_y=\{x \in \ell^2(I) \big| x \in (\partial F_y)^{-1}(0)\},\]
where $\partial F_y$ stands for subdifferential operator.
Minimizing $F_y$ amounts to solving the inclusion
\[
0 \in \partial F_y(x) ~,
\]
or equivalently, to finding a solution to the fixed point equation
\beqn\label{proxalgo}
x = (\Id + \gamma \partial F_y)^{-1}(x) ~,
\eeqn
where $(\Id + \gamma \partial F_y)^{-1}$ is the
{\textit{resolvent operator}} associated to $\partial F_y$, $\gamma>0$ is
the proximal stepsize  and $\Id$ is the identity map on the Hilbert space $\ell^2(I)$.
The proximal  schematic algorithm resulting from (\ref{proxalgo}), namely
 $$x^{(t+1)}=(\Id + \gamma \partial F_y)^{-1}(x^{(t)}),$$
  is a fundamental tool for finding the root of any maximal monotone operator
\cite{Rockafellar76,Eckstein92}, such as e.g. the subdifferential of a convex function.
Since the resolvent operator $(\Id + \gamma \partial F_y)^{-1}$ for $F_y$ in (\ref{minFoF}) cannot be calculated in
closed-form, we focus on iterative methods.

Splitting methods do not attempt to evaluate the resolvent mapping $(\Id + \gamma (\partial \Psi + \partial \Phi))^{-1}$
of the combined function $F_y$, but instead perform a sequence of calculations involving separately the
resolvent operators $(\Id + \gamma \partial \Psi)^{-1}$ and $(\Id + \gamma \partial \Phi))^{-1}$.
The latter are usually easier to evaluate, and this holds true for our functionals $\Psi$ and $\Phi$ in \eqref{minFoF}.

Splitting methods for monotone operators have numerous applications 
for convex optimization and monotone variational inequalities.
Even though the literature is abundant, these
can basically be systematized into three main classes:
the forward-backward \cite{Gabay83,Tseng91,Tseng00},
the Douglas/Peaceman-Rachford \cite{LionsMercier79}, and the little-used double-backward \cite{Lions78,Passty79}.
A recent theoretical overview of all these methods can be found in \cite{Combettes04a,Eckstein08}.
Forward-backward can be seen as a generalization of the classical gradient
projection method for constrained convex optimization, hence it
inherits all its restrictions. Typically, one must assume that either $\Psi$ or $\Phi$ is
differentiable with Lipschitz continuous gradient, and the stepsizes $\gamma$ must fall in a range
dictated by the gradient modulus of continuity; see \cite{CombettesWajs05} for an excellent account.
Obviously,  forward-backward splitting is not adapted to our functional (\ref{proxalgo}).

\subsection{Specialized Douglas-Rachford splitting algorithm}
The Douglas/Peaceman-Rachford family is the most general preexisting class of monotone operator splitting
methods. Given a fixed scalar $\gamma > 0$, let
\beq J_{\gamma\partial \Psi}\defeq(\Id + \gamma \partial \Psi)^{-1} ~~\text{ and }~~
J_{\gamma\partial \Phi}\defeq(\Id + \gamma \partial \Phi)^{-1}.\label{similar}
\eeq
Given a sequence $\mu_t \in (0,2)$, this class of methods can be expressed via the recursion
\beqn
\label{DRalgo}
x^{(t+1)} =   \crochets{\parenth{1-\frac{\mu_t}{2}}\Id + \frac{\mu_t}{2}(2J_{\gamma\partial \Psi}-\Id)\circ(2J_{\gamma\partial \Phi}-\Id)}x^{(t)} ~.
\eeqn

Since our problem \eqref{minFoF} admits solutions, the following result ensures that iteration \eqref{DRalgo} converges for our functional~$F_y$.
\begin{theorem}
\label{convergence}
Let $\gamma > 0$ and $\mu_t \in (0,2)$ be such that $\sum_{t\in\NN}\mu_t(2-\mu_t)=+\infty$.
Take $x^{(0)} \in \ell^2(I)$ and consider the sequence of iterates defined by (\ref{DRalgo}).
Then, $(x^{(t)})_{t \in \NN}$ converges weakly to some point $\h{x}\in\ell(I)$ and $J_{\gamma\partial \Phi}(\h{x}) \in G_y$.
\end{theorem}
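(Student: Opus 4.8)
The plan is to recognize \eqref{DRalgo} as a standard Douglas--Rachford splitting iteration and to invoke the general convergence theory for the Douglas--Rachford operator applied to the sum of two maximal monotone operators. First I would set $A = \partial\Psi$ and $B = \partial\Phi$. Since $\Psi$ and $\Phi$ are proper, lower-semicontinuous, convex functions on the Hilbert space $\ell^2(I)$, classical results (Moreau, Rockafellar) give that $A$ and $B$ are maximal monotone and that the resolvents $J_{\gamma A} = J_{\gamma\partial\Psi}$ and $J_{\gamma B} = J_{\gamma\partial\Phi}$ are single-valued, everywhere defined, and firmly nonexpansive; moreover they coincide with the proximity operators of $\gamma\Psi$ and $\gamma\Phi$. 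I would then write the reflection operators $R_{\gamma A} = 2J_{\gamma A} - \Id$ and $R_{\gamma B} = 2J_{\gamma B} - \Id$, which are nonexpansive, so that the composition $R_{\gamma A}\circ R_{\gamma B}$ is nonexpansive and \eqref{DRalgo} reads $x^{(t+1)} = \bigl((1-\tfrac{\mu_t}{2})\Id + \tfrac{\mu_t}{2}R_{\gamma A}R_{\gamma B}\bigr)x^{(t)}$, i.e. a relaxed Krasnoselskii--Mann iteration for the nonexpansive map $T := R_{\gamma A}R_{\gamma B}$ with relaxation parameters $\mu_t/2 \in (0,1)$.

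The second step is to identify the fixed point set of $T$ and relate it to $G_y$. Here I would use the characterization: $z$ is a fixed point of $R_{\gamma A}R_{\gamma B}$ if and only if $J_{\gamma B}(z) \in \mathrm{zer}(A+B) = (\partial\Psi + \partial\Phi)^{-1}(0)$. Because $\mathrm{dom}\,\Psi = \ell^2(I)$ (the $\ell^1$ term with weighted diagonal is finite everywhere), we have the subdifferential sum rule $\partial(\Psi+\Phi) = \partial\Psi + \partial\Phi = \partial F_y$, so $\mathrm{zer}(A+B) = G_y$, which is nonempty by Theorem~\ref{exist}. Consequently $\mathrm{Fix}(T)\neq\emptyset$; in particular this is the place where the hypothesis "our problem \eqref{minFoF} admits solutions" is used, exactly as the statement signals.

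The third step is the convergence of the Krasnoselskii--Mann iteration. With $T$ nonexpansive and $\mathrm{Fix}(T)\neq\emptyset$, and with relaxation parameters $\alpha_t := \mu_t/2 \in (0,1)$ satisfying $\sum_t \alpha_t(1-\alpha_t) = \tfrac14\sum_t \mu_t(2-\mu_t) = +\infty$, the standard Krasnoselskii--Mann theorem (see e.g. the references \cite{Combettes04a,Eckstein08,LionsMercier79}) gives that $x^{(t)}$ converges weakly to some $\h x \in \mathrm{Fix}(T)$. Then, by the fixed-point characterization above, $J_{\gamma\partial\Phi}(\h x) \in G_y$, which is precisely the claim. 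I would note that the firm nonexpansiveness of $T$ (Douglas--Rachford operators are in fact $\tfrac12$-averaged) also gives that $x^{(t)} - Tx^{(t)} \to 0$, which is the ingredient that lets one pass to the limit and conclude $\h x \in \mathrm{Fix}(T)$ via demiclosedness of $\Id - T$.

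The main obstacle — really the only non-bookkeeping point — is making sure the abstract Douglas--Rachford convergence theorem applies verbatim in our infinite-dimensional setting $\ell^2(I)$ with the specific two operators at hand: one must check maximal monotonicity of $\partial\Phi$ (which requires $\Phi = \normtv{\Wi\,\cdot}$ to be proper, i.e. not identically $+\infty$, and lower-semicontinuous on $\ell^2(I)$ — both already asserted in the paragraph preceding \eqref{minFoF}) and one must justify the sum rule $\mathrm{zer}(\partial\Psi+\partial\Phi) = \mathrm{zer}(\partial(\Psi+\Phi))$, which follows from $\mathrm{dom}\,\Psi = \ell^2(I)$. Everything else — nonexpansiveness of reflections, the $\sum\mu_t(2-\mu_t)=+\infty$ step-size condition, weak convergence — is a direct citation. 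I would therefore keep this proof short: state the reduction to Krasnoselskii--Mann, cite the convergence theorem, and spell out only the fixed-point/zero-set identification, since that is the part specific to our functional $F_y$.
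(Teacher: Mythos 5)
Your proposal is correct and follows essentially the same route as the paper: the paper disposes of this theorem in one line by citing \cite[Corollary~5.2]{Combettes04a}, and what you have written out (relaxed Krasnoselskii--Mann iteration for the nonexpansive composition of the two reflection operators, the fixed-point characterization $J_{\gamma\partial\Phi}(\h x)\in\mathrm{zer}(\partial\Psi+\partial\Phi)=G_y$, and the step-size condition $\sum_t\mu_t(2-\mu_t)=+\infty$) is precisely the content of that cited result. The only point worth a footnote is that $\mathrm{dom}\,\Psi=\ell^2(I)$ holds verbatim only when $I$ is finite (a weighted $\ell^1$ sum need not be finite on all of $\ell^2$ for infinite $I$), but this qualification-condition issue is glossed over by the paper as well and is harmless in the discrete setting where the algorithm is actually run.
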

This statement is a straightforward consequence of \cite[Corollary~5.2]{Combettes04a}.
For instance, the  sequence $\mu_t=1,\forall t\in\NN$, satisfies the requirement of the latter theorem.

It will be convenient to introduce the reflection operator
\beq\rprox_{\varphi} = 2\prox_{\varphi} - \Id.\label{reflex}\eeq
 where $\prox_\varphi$ is the proximity operator of $\varphi$ according to in Definition~\ref{def:1}.
 Using \eqref{sim} and \eqref{reflex}, the Douglas-Rachford iteration given in \eqref{DRalgo}
becomes
\beqn\label{DRproxalgo}
x^{(t+1)} =   \crochets{\parenth{1-\frac{\mu_t}{2}}\Id + \frac{\mu_t}{2}\rprox_{\gamma \Psi}\circ\rprox_{\gamma \Phi}}x^{(t)} ~.
\eeqn

Below we compute the resolvent operators
$J_{\gamma\partial \Psi}$  and $J_{\gamma\partial \Phi}$ with the help of Moreau proximity operators.

\subsection{Proximal calculus}
Proximity operators were inaugurated in \cite{Moreau1962} as a generalization of convex projection operators.
\begin{definition}[Moreau\cite{Moreau1962}]
\label{def:1}
Let $\varphi$ be a proper, lower-semicontinuous and convex function defined on a Hilbert space $\H$.
Then, for every $x\in\H$, the
function $z \mapsto \varphi(z) + \norm{x-z}^{2}/2$, for $z\in\H$,  achieves its infimum at a unique
point denoted by $\prox_{\varphi}x$. The operator $\prox_{\varphi} : \H \to \H$ thus
defined is the \textit{proximity operator} of $\varphi$.
\end{definition}
By the minimality condition for $\prox_{\varphi}$, it is straightforward that
$\forall x,p \in \H$ we have
\beq
\label{eq:prox}
p = \prox_{\varphi} x \iff x-p \in \partial\varphi(p) ~\iff~(\Id+\partial\varphi)^{-1}=\prox_{\varphi}.
\eeq
Then \eqref{similar} reads
\beq J_{\gamma\partial \Psi}=\prox_{\gamma\Psi} ~~\text{ and }~~
J_{\gamma\partial \Phi}=\prox_{\gamma\Phi}.
\label{sim}\eeq

\subsubsection{Proximity operator of $\Psi$}\label{proxPSI}
The proximity operator of $\gamma \Psi$ is {established in the lemma stated below.}
\begin{lemma}
\label{proxf1}
Let $x \in \ell^2(I)$. Then
\beqn
\label{eq:proxf1}
\prox_{\gamma \Psi}(x) = \parenth{y_\HT[i] + \ST^{\gamma\lambda_i}\parenth{x[i] - y_\HT[i]}}_{i \in I} ~,
\eeqn
with
\beq\ST^{\gamma\lambda_i}(z[i]) = \max\big\{0,\;z[i] - \gamma\lambda_i\sign(z[i])\big\}.\label{soft-threshold}\eeq
\end{lemma}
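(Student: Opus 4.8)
The plan is to compute the proximity operator of $\gamma\Psi$ directly from Definition~\ref{def:1}, exploiting the fact that $\Psi$ is \emph{separable} across the index set $I$. Writing $z = x - y_\HT$, we have $\Psi(x) = \sum_{i\in I}\lambda_i|z[i]|$, and the defining minimization
\[
\prox_{\gamma\Psi}(x) = \arg\min_{p\in\ell^2(I)} \Big( \gamma\sum_{i\in I}\lambda_i\,|p[i]-y_\HT[i]| + \tfrac12\norm{x-p}^2 \Big)
\]
decouples into independent scalar problems after the substitution $q[i] = p[i] - y_\HT[i]$: for each $i$ one minimizes $g_i(q) = \gamma\lambda_i|q| + \tfrac12(q - z[i])^2$ over $q\in\RR$. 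Hence it suffices to identify the minimizer of each $g_i$ and then undo the shift.

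The key steps, in order: (1) Argue separability rigorously — since the $\ell^2$ norm squared and the $\ell^1$-type sum both split coordinatewise and the feasible set is all of $\ell^2(I)$, the infimum is attained by minimizing term-by-term, and the resulting sequence lies in $\ell^2(I)$ because it differs from $x$ by a contraction of $x - y_\HT \in \ell^2(I)$. (2) Solve the scalar problem $\min_q \gamma\lambda_i|q| + \tfrac12(q-z[i])^2$: using the optimality condition $0 \in \gamma\lambda_i\,\partial|q| + (q - z[i])$ and a standard case analysis on the sign of $q$, the unique minimizer is the soft-thresholding $q^\star = \sign(z[i])\max\{0, |z[i]| - \gamma\lambda_i\}$, which is exactly $\ST^{\gamma\lambda_i}(z[i])$ as written in \eqref{soft-threshold}. (3) Translate back: $p[i] = y_\HT[i] + q^\star = y_\HT[i] + \ST^{\gamma\lambda_i}\!\big(x[i] - y_\HT[i]\big)$, giving \eqref{eq:proxf1}. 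One may alternatively phrase step (2) via the characterization \eqref{eq:prox}, checking that $p = y_\HT + \ST^{\gamma\lambda}(x - y_\HT)$ satisfies $x - p \in \gamma\,\partial\Psi(p)$ componentwise.

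There is essentially no hard part here — the result is a textbook computation once separability is invoked. The only point requiring a modicum of care is the justification that minimizing the infinite sum coordinatewise does yield the global minimizer in $\ell^2(I)$ (rather than merely a formal candidate): this follows because $z = x - y_\HT \in \ell^2(I)$, soft-thresholding is $1$-Lipschitz and maps $0$ to $0$, so $q^\star \in \ell^2(I)$ and thus $p \in \ell^2(I)$; and for any competitor $p'\in\ell^2(I)$ one has $\gamma\Psi(p') + \tfrac12\norm{x-p'}^2 = \sum_i g_i(p'[i]-y_\HT[i]) \geq \sum_i g_i(q^\star[i]) = \gamma\Psi(p)+\tfrac12\norm{x-p}^2$, with all sums well-defined in $[0,+\infty]$. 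Uniqueness is automatic from Definition~\ref{def:1} (strong convexity of $z\mapsto\varphi(z)+\norm{x-z}^2/2$), or from strict convexity of each $g_i$.
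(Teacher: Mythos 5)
Your proposal is correct and follows essentially the same route as the paper's proof: exploit the coordinatewise separability of $\Psi$, reduce via the translation $p = y_\HT + q$ to the scalar prox of $q\mapsto\gamma\lambda_i|q|$, and identify that prox as soft-thresholding. Your added verification that the coordinatewise minimizer actually lies in $\ell^2(I)$ and is the global minimizer there is a welcome extra degree of rigor the paper leaves implicit, but it does not change the argument.
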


\begin{proof}
$\Psi$ is an additive separable function in each coordinate $i \in I$.
Thus, solving the proximal minimization problem of Definition \ref{def:1} is also separable.
For any convex function $\phi$ and $v \in \ell^2(I)$, put $\psi(.)=\phi(.-v)$.
Then
\begin{eqnarray*}
p = \prox_\psi(x) &\iff& x - p \in \partial\psi(p) \\
&\iff& (x - v) - (p - v) \in \partial\phi(p-v) \\
&\iff& p - v = \prox_\phi(x - v) \\
&\iff& p = v + \prox_\phi(x - v) ~.
\end{eqnarray*}
For each $i\in I$, we apply this result with $v=y_\HT[i]$ and $\phi(z[i])=\gamma\lambda_i |z[i]|$.
Noticing that $\prox_\phi=\ST^{\gamma\lambda_i}$ is soft-thresholding with threshold $\gamma\lambda_i$,
leads to~\eqref{eq:proxf1}.
\end{proof}

\bigskip

 Note that now
\beq\rprox_{\gamma \Psi}(x) = 2\parenth{y_\HT[i] + \ST^{\gamma\lambda_i}\parenth{x[i] - y_\HT[i]}}_{i \in I}-x ~.\label{rproxPsi}
\eeq

\subsubsection{Proximity operator of $\Phi$}
Clearly, $\Phi(x) = \normtv{\cdot}\circ\Wi(x)$ is a pre-composition of the TV-norm with the linear operator $\Wi$.
Computing the proximity operator of $\Phi$ for an arbitrary $\Wi$ may be intractable.
We adopt the following assumptions:
\bit
\item[(w1)] $\Wi:\ell^2(I)\to L^2(\O)$ is surjective;
\item[(w2)] $\Wi\W = \Id$ and $\Wi = c^{-1}\W^*$ for $0<c<\infty$, where $\W^*$ stands for  the adjoint operator;
note that we also have $W^*W=c~\Id$;
\item[(w3)] $\Wi$ is bounded.
\eit
 For any $ z(t)=(z_1(t),z_2(t))\in\RR^2, t \in \O$, we set $\left|z(t)\right|=\sqrt{z_1(t)^2 + z_2(t)^2}$.
Let $\X=L^2(\O) \times L^2(\O)$ and $\pds{\cdot}{\cdot}_\X$ be the inner product in $\X$, and $\nnorm{\cdot}_p$,
for $p\in[1,\infty]$ the $L^p$-norm on $\X$.
We define $\overline{B}^{\;\gamma}_\infty(\X)$ as the {\sf closed} $L^\infty$-ball of radius
$\gamma$ in $\X$,
\beq\overline{B}^{\;\gamma}_\infty\defeq\left\{z\in\X:\nnorm{z}_\infty\leq\gamma\right\}=
\Big\{z=(z_1,z_2)\in\X: |z(t)|\leq\gamma, \forall t \in \O\Big\},\label{Binfty}\eeq
and
$P_{\overline{B}^{\;\gamma}_\infty(\X)}: \X \to \overline{B}^{\;\gamma}_\infty(\X)$ the associated projector;
it is easy to check that the latter is equal to
the proximity operator of the indicator function of $\overline{B}^{\;\gamma}_\infty(\X)$.
The expression for $\prox_{\gamma\Phi}$  is given in the next lemma while the computation scheme to
solve item (ii) is stated in Lemma~\ref{lem:3}.
\begin{lemma}
\label{proxf2}
Let $x \in \ell^2(I)$ and $\overline{B}^{\;\gamma}_\infty(\X)$ is as defined above.
\bit
\item[(i)] Denoting by $\prox_{c^{-1}\gamma\normtv{\cdot}}(u)$ the proximity operator of the ($c^{-1}$-scaled) TV-norm, we have
\beqn
\label{eq:proxf2}
\prox_{\gamma \Phi}(x) = \parenth{\Id - \W \circ \parenth{\Id - \prox_{c^{-1}\gamma\normtv{\cdot}}} \circ \Wi}(x) ~;
\eeqn
\item[(ii)] Furthermore,
\beq
\prox_{c^{-1}\gamma\normtv{\cdot}}(u) = u - P_C(u) ~,
\label{prox:scaled}
\eeq
where
\beq C=\Big\{\mathrm{div}(z)\in L^2(\O) \big|z\in \C^\infty_c(\O\times\O), z \in
\overline{B}^{\;\gamma/c}_\infty(\X)\Big\}. \label{C}\eeq
\eit
\end{lemma}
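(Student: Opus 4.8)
\medskip

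\noindent\textbf{Proof proposal.}
The plan is to prove the two items separately; each reduces to a classical identity of convex analysis once the hypotheses (w1)--(w3) are exploited. For item (i) I would invoke the proximal calculus rule for precomposition with a linear operator: if $L$ is a bounded linear operator between Hilbert spaces with $L\circ L^{*}=\nu\,\Id$ for some $\nu>0$, then $\prox_{g\circ L}=\Id+\nu^{-1}L^{*}\circ(\prox_{\nu g}-\Id)\circ L$ for every proper lower-semicontinuous convex $g$ (see, e.g., \cite{CombettesWajs05}); this is checked by writing the optimality condition $x-p\in L^{*}\partial g(Lp)$ for the proximal problem of $g\circ L$ and verifying that the right-hand side satisfies it, using $LL^{*}=\nu\Id$. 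I would apply this with $L=\Wi$ and $g=\gamma\normtv{\cdot}$.

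The first step is to verify the hypothesis on $\Wi$. By (w2), $\Wi=c^{-1}\W^{*}$ and $\W^{*}\W=c\,\Id$, so $\Wi\,\Wi^{*}=c^{-2}\W^{*}\W=c^{-1}\Id$, i.e.\ $\nu=c^{-1}$; surjectivity (w1) and boundedness (w3) of $\Wi$ guarantee that all operators appearing in the rule are well defined. Substituting $\nu=c^{-1}$ and using $\nu^{-1}\Wi^{*}=c\cdot c^{-1}\W=\W$, one gets $\prox_{\gamma\Phi}=\Id+\W\circ(\prox_{c^{-1}\gamma\normtv{\cdot}}-\Id)\circ\Wi=\Id-\W\circ(\Id-\prox_{c^{-1}\gamma\normtv{\cdot}})\circ\Wi$, which is \eqref{eq:proxf2}.

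For item (ii) I would start from the distributional definition of total variation on the bounded Lipschitz domain $\O$ (see e.g.\ \cite{Aubert06}): for $u\in L^2(\O)$, $\normtv{u}=\sup\big\{\pds{u}{\mathrm{div}(z)}:z\in\C^\infty_c(\O\times\O),\ \nnorm{z}_\infty\le 1\big\}$, with the value $+\infty$ when $u\notin BV(\O)$; under the stated regularity this functional is proper, convex and lower-semicontinuous on $L^2(\O)$. Scaling by $c^{-1}\gamma$ and absorbing the factor into the $L^\infty$-radius of the test fields shows that $c^{-1}\gamma\normtv{\cdot}$ is exactly the support function $\sigma_C$ of the convex set $C$ of \eqref{C}, equivalently $\sigma_{\overline C}$, since a support function is insensitive to closure and convex hull. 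Hence its Legendre--Fenchel conjugate is $\iota_{\overline C}$, the indicator of the closed convex set $\overline C$. The final step is Moreau's decomposition $\prox_{\varphi}+\prox_{\varphi^{*}}=\Id$ \cite{Moreau1962} applied to $\varphi=c^{-1}\gamma\normtv{\cdot}$, which yields $\prox_{c^{-1}\gamma\normtv{\cdot}}(u)=u-\prox_{\iota_{\overline C}}(u)=u-P_C(u)$, with $P_C$ the metric projection onto $\overline C$; this is \eqref{prox:scaled}.

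The main obstacle is essentially bookkeeping rather than conceptual: one must pin down the scalar $\nu=c^{-1}$ and the correspondingly scaled threshold $c^{-1}\gamma$ when invoking the precomposition rule, and one must justify that the literal set $C$ in \eqref{C}---whose members are divergences of $\C^\infty_c$ fields---may be replaced by its $L^2$-closure without affecting either the support-function representation of TV or the projection in \eqref{prox:scaled}. The only analytic input beyond convex-analytic identities is that total variation is proper, convex and lower-semicontinuous as a functional on $L^2(\O)$, which is where the boundedness of $\O$ and the Lipschitz regularity of $\partial\O$ (as in Theorem~\ref{exist}) enter.
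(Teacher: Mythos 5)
Your proposal is correct and follows essentially the same route as the paper: item (i) is obtained from the proximal precomposition rule for a semi-orthogonal linear operator with $\Wi\,\Wi^{*}=c^{-1}\Id$ (the paper invokes \cite[Proposition~11]{Combettes07} after checking a qualification condition, whereas you verify the identity directly from the optimality condition $x-p\in L^{*}\partial g(Lp)$), and item (ii) uses exactly the paper's argument, namely Moreau's decomposition combined with the fact that the conjugate of a norm is the indicator of the ball of its dual norm. Your added care in distinguishing $C$ from its $L^2$-closure is in fact a small improvement, since the paper's assertion that $C$ is closed merely because $\overline{B}^{\;\gamma/c}_\infty(\X)$ is closed and $\mathrm{div}$ is linear is not a valid argument in infinite dimensions.
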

\begin{proof}
Since $\Wi$ is surjective, its range is $L^2(\O)$ which is closed.
Moreover, the domain $\mathrm{dom}(\normtv{\cdot}) =L^2(\O)$  as well,  so
that $\mathrm{cone}\left(\mathrm{dom}\normtv{\cdot}-\mathrm{range}~\Wi\right)=\{0\}$  which is a closed subspace of $L^2(\O)$.
Reminding that $\normtv{\cdot}$ is lower bounded, continuous and convex, it is clear that all assumptions
required in \cite[Proposition 11]{Combettes07} are satisfied. Applying the same proposition yields statement~(i).

 We focus next on (ii).
Note that $C$ in \eqref{C} is a closed convex subset since $\overline{B}^{\;\gamma/c}_\infty(\X)$
is closed and convex, and
$\mathrm{div}$ is linear; thus the projection $P_C$ is well defined.

Let us remind that the Legendre-Fenchel (known also as the convex-conjugate) transform of a function
$\ph:\H\to\RR$, where $\H$ is an Hilbert space, is defined by
\[\varphi^*(w) = \sup_{u \in \mathrm{dom}(\varphi)} \big\{\pds{w}{u} - \varphi(u)\big\},\]
and that $\ph^*$ is a closed convex function.
If $\ph$ is convex, proper and lower semi-continuous,
the original Moreau decomposition \cite[Proposition 4.a]{Moreau1962} tells us that
\beq
\label{eq:proxconj}
\prox_{\varphi}+\prox_{\varphi^*} = \Id ~.
\eeq
One can see also \cite[Lemma~2.10]{CombettesWajs05} for an alternate proof.
It is easy to check that the conjugate function of a norm is the indicator function $\imath$ of the ball of its dual
norm, see e.g. \cite[Eq.(2.7)]{AA05}; thus
\[\big(c^{-1}\gamma\normtv{\cdot}\big)^*(z)=
\begin{cases}
0   &\mbox{if} ~ z\in C~ ,\\
+\infty &\mbox{if} ~ z\not\in C~,
\end{cases}
\]
where $C$ is given in \eqref{C}.
Using Definition \ref{def:1}, it is straightforward that
\[
\prox_{\big(c^{-1}\gamma\normtv{.}\big)^*}=P_C.
\]
Identifying $c^{-1}\gamma\normtv{.}$ with $\ph$ and $\big(c^{-1}\gamma\normtv{.}\big)^*$ with $\ph^*$,
equation \eqref{eq:proxconj} leads to statement~(ii). The proof is complete.
\end{proof}

\bigskip

Note that our argument \eqref{eq:proxconj} for the computation of $\prox_{c^{-1}\gamma\normtv{\cdot}}(u)$ is
not used  in \cite{Chambolle04}, which instead uses conjugates and bi-conjugates of the objective function.

\begin{remark}\rm
In view of equations \eqref{prox:scaled} and \eqref{C}, we one can see that
the term between the middle parentheses in equation \eqref{eq:proxf2} admits a simpler form:
\[\Id - \prox_{c^{-1}\gamma\normtv{\cdot}}=P_C.\]
\end{remark}

 Using \eqref{reflex} along with \eqref{eq:proxf2}-\eqref{prox:scaled} we easily find that
\beqn\rprox_{\gamma \Phi}(x) &=&
\parenth{\Id - 2\W \circ \parenth{\Id - \prox_{c^{-1}\gamma\normtv{\cdot}}} \circ \Wi}(x) ~ \label{rPHI} \nonumber\\
&=& \parenth{\Id - 2\W \circ P_C \circ \Wi}(x) ~.
\eeqn

\subsubsection{Calculation of the projection $\bm{P_C}$ in \eqref{prox:scaled} in a discrete setting}\label{PCD}

{\sl In what follows, we work in the discrete setting. We consider that
that $W$ is an $M\times N$ tight frame}
with $M=\#I\gg N$, admitting a  constant $c>0$ such that
$$\Wi\W = \Id~~~\mbox{and} ~~~\Wi = c^{-1}\W^T ~~(~\mbox{hence}~~~W^TW=c~\Id).$$
(This is the discrete equivalent of assumption (w2).)
 We also suppose that $\Wi:\ell^2(I)\to\ell^2(\O)$ is surjective.
Next we replace $\X$ by its discrete counterpart,
\beq\X=\ell^2(\O) \times \ell^2(\O)~~\mbox{where}~~\O~~\mbox{is discrete with}~~\#\O=N.
\label{Xdisc}\eeq
We denote the discrete gradient by $\nablaD$ and
consider $\divD: \X \to \ell^2(\O)$ the discrete divergence defined by analogy with the continuous
setting \footnotemark~as
the adjoint of the gradient $\divD=-{\nablaD}^*$; see \cite{Chambolle04}.
\footnotetext{More precisely, let $u\in\ell^2(\O)$ be of size $m\times n$, $N=mn$.
We write $(\nablaD u)[i,j]=\big(u[i+1,j]-u[i,j],~u[i,j+1]-u[i,j]\big)$ with boundary conditions
$u[m+1,i]=u[m,i],~\forall i$ and $u[i,n+1]=u[i,n],~\forall i$; then
for $z\in\X$, we have $(\divD(z))[i,j]=\big(z_1[i,j]-z_1[i-1,j]\big)+\big(z_2[i,j]-z_2[i,j-1]\big)$
along with $z_1[0,i]=z_1[m,i]=z_2[i,0]=z_2[i,n]=0$, $\forall i$.}

Unfortunately, the projection in \eqref{prox:scaled} does not admit an explicit form.
The next lemma provides an iterative scheme to compute the proximal points introduced in Lemma~\ref{proxf2}.
In this discrete setting, $C$ in \eqref{C} admits a simpler expression:
\beq C=\Big\{\divD(z)\in \ell^2(\O) ~\big|~ z \in
\overline{B}^{\;\gamma/c}_\infty(\X)\Big\}.
 ~,
\label{C1}
\eeq
 where $\overline{B}^{\;\gamma/c}_\infty(\X)$ is defined according to \eqref{Binfty}.
\begin{lemma}\label{lem:3}
\label{proxf3}
We adapt all assumptions of Lemma~\ref{proxf2} to the new discrete setting, as explained above.
Consider the forward-backward iteration
\beqn
\label{eq:proxtv}
z^{(t+1)} = P_{\overline{B}^{\;1}_\infty(\X)}\parenth{z^{(t)} + \beta_t \nablaD\parenth{\divD(z^{(t)}) - c u/\gamma}} ~ \mbox{for}
~~~0 < \inf_t \beta_t \leq \sup_t \beta_t < 1/4,
\eeqn
where $\forall (i,j) \in \O$
\[\disp{P_{\overline{B}^{\;1}_\infty(\X)}(z)[i,j] =
\begin{cases}
z[i,j]            & \mbox{if} ~ |z[i,j]| \leq 1;\\
\disp{\frac{z[i,j]}{|z[i,j]|}}     & \mbox{otherwise} ~. 
\end{cases}} ~
\]

Then
\ben
\item[(i)]
$(z^{(t)})_{t \in \NN}$ converges to a point $\h{z} \in \overline{B}^{\;1}_\infty(\X)$;
\item[(ii)]
$\disp{\parenth{c^{-1}\gamma\divD(z^{(t)})}_{t \in \NN}}$ converges to
$\disp{c^{-1}\gamma\divD(\h{z})=(\Id - \prox_{c^{-1}\gamma\normtv{\cdot}})(u)}$.
\een
\end{lemma}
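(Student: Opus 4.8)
The plan is to recognize the recursion \eqref{eq:proxtv} as a projected-gradient (i.e. forward--backward) scheme for a finite-dimensional constrained quadratic problem, and then to invoke the forward--backward convergence theory already used in this paper together with the continuity of $\divD$.

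First I would reformulate the target quantity. By Lemma~\ref{proxf2}(ii) and \eqref{C1}, $(\Id-\prox_{c^{-1}\gamma\normtv{\cdot}})(u)=P_C(u)$, where $C=\{\divD(z):z\in\overline{B}^{\;\gamma/c}_\infty(\X)\}$ is the image of the compact convex set $\overline{B}^{\;\gamma/c}_\infty(\X)$ under the linear map $\divD$, hence compact and convex, so $P_C$ is well defined. By definition $P_C(u)$ minimizes $w\mapsto\frac12\norm{w-u}^2$ over $w\in C$; substituting $w=c^{-1}\gamma\,\divD(z)$ with $z\in\overline{B}^{\;1}_\infty(\X)$ (a surjective reparametrization, since $\divD$ is linear) this becomes
\[\min_{z\in\overline{B}^{\;1}_\infty(\X)}~g(z),\qquad g(z)\defeq\frac12\norm{\divD(z)-cu/\gamma}^2,\]
and for \emph{any} minimizer $\h z$ of $g$ over $\overline{B}^{\;1}_\infty(\X)$ one has $c^{-1}\gamma\,\divD(\h z)=P_C(u)=(\Id-\prox_{c^{-1}\gamma\normtv{\cdot}})(u)$. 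A minimizer exists because $g$ is continuous and $\overline{B}^{\;1}_\infty(\X)$ is compact (we are in finite dimension).

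Next I would check that \eqref{eq:proxtv} is exactly the projected-gradient iteration for this problem. Since $\divD=-{\nablaD}^*$, the convex quadratic $g$ is differentiable with $\nabla g(z)=\divD^*\parenth{\divD(z)-cu/\gamma}=-\nablaD\parenth{\divD(z)-cu/\gamma}$, hence $z-\beta\,\nabla g(z)=z+\beta\,\nablaD\parenth{\divD(z)-cu/\gamma}$; composing with the projection onto $\overline{B}^{\;1}_\infty(\X)$, whose pointwise form is the one displayed (it is the proximity operator of the indicator of that ball), recovers \eqref{eq:proxtv} verbatim. The gradient $\nabla g$ is Lipschitz with constant $\norm{\divD^*\divD}=\norm{\divD}^2=\norm{\nablaD}^2\le 8$, the classical bound on the norm of the discrete divergence \cite{Chambolle04}; therefore the stated step-sizes $0<\inf_t\beta_t\le\sup_t\beta_t<1/4\le 2/\norm{\divD}^2$ lie in the admissible range of forward--backward splitting applied to the sum of the smooth part $g$ and the (indicator) nonsmooth part. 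The forward--backward convergence theorem \cite{CombettesWajs05,Combettes04a} then yields weak convergence of $(z^{(t)})_{t\in\NN}$ to some minimizer $\h z\in\overline{B}^{\;1}_\infty(\X)$ of $g$, which is strong convergence here since $\X$ is finite-dimensional; this is claim (i).

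Finally, claim (ii) is immediate: $\divD$ is a bounded linear operator, so $z^{(t)}\to\h z$ gives $\divD(z^{(t)})\to\divD(\h z)$, and multiplying by the constant $c^{-1}\gamma$ produces $c^{-1}\gamma\,\divD(z^{(t)})\to c^{-1}\gamma\,\divD(\h z)=(\Id-\prox_{c^{-1}\gamma\normtv{\cdot}})(u)$ by the first step. I expect the only real difficulty to be bookkeeping rather than conceptual: getting the adjoint and sign conventions to line up so that the displayed recursion is literally the projected-gradient step for $g$, and verifying that the window $\sup_t\beta_t<1/4$ is precisely what the bound $\norm{\divD}^2\le 8$ permits. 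One should also note that although the minimizer set of $g$ need not be a singleton (the kernel of $\divD$ is nontrivial), its image under $\divD$ is the single point $P_C(u)$, so the limit in (ii) is unambiguous whichever minimizer the iterates happen to approach.
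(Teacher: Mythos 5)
Your argument is correct, and for part (i) it follows the same route as the paper: rewrite $P_C(u)$ as the constrained least-squares problem $\min_{z\in\overline{B}^{\;1}_\infty(\X)}\frac12\norm{\divD(z)-cu/\gamma}^2$, recognize \eqref{eq:proxtv} as the projected-gradient (forward--backward) iteration for it, and justify the stepsize window via $\norm{\divD}^2\le 8$, so that $1/4\le 2/\norm{\divD}^2$. Where you genuinely diverge is part (ii). The paper does not use finite-dimensionality there: from the splitting theory it only gets \emph{weak} convergence of $z^{(t)}$, and since a bounded linear operator maps weakly convergent sequences only to weakly convergent ones, it must work harder to get strong convergence of $\divD(z^{(t)})$. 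It does so by writing $\norm{\divD(z^{(t)})-\divD(\h z)}^2=\pds{-\nablaD(\divD(z^{(t)})-\divD(\h z))}{z^{(t)}-\h z}_\X$, applying Cauchy--Schwarz, and invoking the summability of $\sum_t\nnorm{\gradf_z g(z^{(t)})-\gradf_z g(\h z)}_2^2$ from the forward--backward theory together with boundedness of the iterates. Your shortcut --- $\X\cong\RR^{2N}$ is finite-dimensional, so weak convergence is strong convergence and continuity of $\divD$ finishes the job --- is perfectly valid in the discrete setting in which the lemma is stated, and it is simpler; the paper's argument buys a proof that survives in an infinite-dimensional Hilbert space. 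Your closing remark that the minimizer $\h z$ need not be unique while $\divD(\h z)=P_C(u)$ is uniquely determined is a worthwhile observation that the paper leaves implicit.
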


\begin{proof}
Given $u\in\ell^2(\O)$,
the projection $\hat w=P_C(u)$ defined by \eqref{prox:scaled} and \eqref{C1} is unique and satisfies
\beqn
\label{eq:projC} \h w&=&\arg\min_{w\in C}\frac{1}{2}\|u-w\|^2=
\arg\min\left\{\frac{1}{2}\norm{\frac{c}{\gamma}u - w}^2~\mbox{subject to}~
w=\divD(z) ~\mbox{for}~z \in \overline{B}^{\;1}_\infty(\X)\right\}\nonumber\\
&&\Updownarrow\nonumber\\
\h w&=&\divD(\h z)~~\mbox{where}~~\h z=
\arg\min_{z \in \overline{B}^{\;1}_\infty(\X)} ~ \frac{1}{2}\norm{\frac{c}{\gamma}u - \divD(z)}^2
\eeqn
This problem can be solved using a projected gradient method (which is a special instance of the forward-backward
splitting scheme) whose iteration is given by \eqref{eq:proxtv}.
This iteration converges weakly to a minimizer of (\ref{eq:projC})---see
\cite[Corollary~6.5]{Combettes04a},
provided that the stepsize $\beta_t > 0$ satisfies $\sup_t \beta_t < 2/\delta^2$, where $\delta$ is the spectral norm of
the $\mathrm{div}$ operator. It is easy to check that $\delta^2 \leq 8$---see e.g. \cite{Chambolle04}.

Set
$$\omega^{(t)}=c^{-1}\gamma\divD(z^{(t)}), \forall t \in \NN~~\mbox{and}~~
\h{\omega}=c^{-1}\gamma\divD(\h{z}).$$ Thus,
\beqn
\label{eq:strong1}
\norm{\omega^{(t)} - \h{\omega}}^2 &=&  \parenth{\frac{\gamma}{c}}^2\norm{\divD(z^{(t)})-\divD(\h{z})}^2 \nonumber \\
                   &=& \parenth{\frac{\gamma}{c}}^2\pds{-\nablaD\parenth{\divD(z^{(t)}) -
\divD(\h{z})}}{z^{(t)}-\h{z}}_\X ~,
\eeqn
where we use the fact that $-\nablaD$ is the adjoint of $\divD$. Let $\gradf_z$ denote the gradient of a scalar-valued function of
 $z$, not to be confused with the discrete gradient operator $\nablaD$ of an image.
 The gradient of the function $\frac{1}{2}\norm{cu/\gamma - \divD(z)}^2$ with respect to
 $z$ is $\gradf_z\parenth{{\frac{1}{2}\norm{cu/\gamma -\divD(z)}^2}}=-\nablaD\parenth{\divD(z) - c u/\gamma}$.
This relation together with the Schwarz inequality applied to \eqref{eq:strong1} lead to
\beqn
\label{eq:strong2}
\norm{\omega^{(t)} - \h{\omega}}^2 &\leq& \parenth{\frac{\gamma}{c}}^2\nnorm{\nablaD~\divD(z^{(t)}) -
\nablaD~\divD(\h{z})}_2\nnorm{z^{(t)}-\h{z}}_2 \nonumber \\
&=&\parenth{\frac{\gamma}{c}}^2\nnorm{\nablaD~\big(\divD(z^{(t)}) - c u/\gamma \big)-
\nablaD~\big(\divD\big((\h{z})- c u/\gamma \big)}_2\nnorm{z^{(t)}-\h{z}}_2 \nonumber\\
&=&  {0.5}\parenth{\frac{\gamma}{c}}^2\nnorm{\gradf_z\parenth{\norm{cu/\gamma-\divD(z^{(t)})}^2}  -
\gradf_z\parenth{\norm{cu/\gamma-\divD(\h{z})}^2}}_2\nnorm{z^{(t)}-\h{z}}_2 ~.
\eeqn
 From \cite[Theorem 6.3]{Combettes04a}, we deduce that the series
\[\sum_{t \in \NN} \nnorm{\gradf_z\parenth{\norm{cu/\gamma -\divD(\cdot)}}(z^{(t)}) -
\gradf_z\parenth{\norm{cu/\gamma -\divD(.)}}(\h{z})}_2^2\]
is convergent.
Inserting this property in (\ref{eq:strong2}) and using  the fact that the sequence
$(z^{(t)})_{t \in \NN}$ is bounded (as it converges weakly with
$\nnorm{\h{z}}_2 < {\lim\inf}_{t \to \infty}\nnorm{z^{(t)}}_2$), it follows that $\omega^{(t)}$ converges
strongly to $\h{\omega}$.
This completes the proof.
\end{proof}

\bigskip

The forward-backward splitting-based iteration proposed in \eqref{eq:proxtv} to
compute the proximity operator of the TV-norm is new and different from the projection
algorithm of Chambolle \cite{Chambolle04},
even tough the two algorithms bear some similarities.
The forward-backward splitting allows to derive a sharper upper-bound on the stepsize
$\beta_t$ than the one proposed in \cite{Chambolle04}{\textemdash}actually twice as large.
Let us remind that it was observed in \cite{Chambolle04} that the bound $1/4$ still works in practice.
Here we prove why thus is really true.

\subsection{Comments on the Douglas-Rachford scheme for $F_y$}
A crucial property of the Douglas-Rachford splitting scheme \eqref{DRproxalgo}
is its robustness to numerical errors that may occur when computing the proximity operators
$\prox_{\Psi}$ and $\prox_{\Phi}$, see \cite{Combettes04a}.
We have deliberately omitted this property in \eqref{DRproxalgo} for the sake of simplicity.
This robustness property has important consequences: e.g. it allows us to run
the forward-backward sub-recursion \eqref{eq:proxtv} only a few iterations to compute
an approximate of the TV-norm proximity operator in the inner iterations,
and the Douglas-Rachford is still guaranteed to converge provided that these numerical errors are under control.
More precisely, let $a_t \in \ell^2(I)$ be an error term that models the inexact computation of $\prox_{\gamma\Phi}$
in \eqref{eq:proxf2}, as the latter is obtained through \eqref{eq:proxtv}.
If the sequence of error terms $\parenth{a_t}_{t \in \NN}$ and stepsizes $\parenth{\mu_t}_{t \in \NN}$
defined in Theorem~\ref{convergence} obey $\sum_{t\in\NN} \mu_t\norm{a_t} < +\infty$,
then the Douglas-Rachford algorithm \eqref{DRproxalgo} converges weakly \cite[Corollary~6.2]{Combettes04a}.
In our case, using 200 inner iterations in \eqref{eq:proxtv} was sufficient to satisfy this requirement.

\begin{remark} \rm
Owing to the splitting framework and proximal calculus, we have shown in Lemma~\ref{proxf2}
 that the bottleneck of the minimization algorithm is in the computation of the proximity-operator
 of the TV-norm. In fact, computing $\prox_{\normtv{\cdot}}$ amounts to solving a discrete ROF-denoising.
 Our forward-backward iteration is one possibility among others, and other algorithms
 beside \cite{Chambolle04} have been proposed to solve the discrete ROF-denoising  problem.
 While this paper was submitted, our attention was drawn to an independent work of \cite{Aujol08} who,
 using a different framework, derive an iteration similar to (\ref{eq:proxtv})
 to solve the ROF. Another parallel work of \cite{Zhu08} propose an application of gradient projection
to solving the dual problem \eqref{eq:projC}.
We are of course aware of max-flow/min-cut type algorithms, for instance the one in
 \cite{ChambolleDarbon08}. We have compared our whole denoising procedure using our implementation
 of $\prox_{\normtv{\cdot}}$ and the max-flow based implementation that we adapted from the code
 available at \cite{ChambolleTVURL}.
 We obtained similar results, although the max-flow-based algorithm was faster,
 mainly because it uses the $\ell^1$ approximation of the discrete gradient,
namely $\left\|(\nablaD u)[i,j]\right\|_1=\big|u[i+1,j]-u[i,j]\big|+\big|u[i,j+1]-[i,j]\big|$.
 Let us remind that this approximation for the discrete gradient does not inherits the rotational
 invariance property of the $L^2$ norm of the usual gradient.
\end{remark}

\section{Bias correction to recover the sought-after image}
\label{resto-expo}
Recall from \eqref{logN} that $u_0=\log S_0$ and set $\h u =\Wi \h x^{(N_{\mathrm{DR}})}$
as the estimator of $u_0$, where $N_{\mathrm{DR}}$ is the number of Douglas-Rachford iterations in \eqref{DRproxalgo}.
Unfortunately, the estimator $\h u$ is prone to bias, i.e. $\E{\h u}=u_0-b_{\h u}$.
A problem that classically arises in statistical estimation is how to correct such a bias.
More importantly is how this bias affects the estimate after applying the inverse transformation,
here the exponential.
Our goal is then to ensure that for the estimate $\h S$ of the image, we have
$\E{\h S}=S_0$.
Expanding $\h S$ in the neighborhood of $\E{\h u}$, we have
\beqn
\E{\exp{\h u}} &=& \exp{\parenth{\E{\h u }}} (1 + \var{\h u} /2 + R_2) \nonumber \\
                &=& S_0\exp{\parenth{-b_{\h u}}}(1 + \var{\h u} /2 + R_2) ~,
\eeqn
where $R_2$ is expectation of the Lagrange remainder in the Taylor series.
 One can observe that  the posterior distribution of $\h u$ is nearly symmetric, in which case $R_2 \approx 0$.
That is, $b_{\h u} \approx \log(1 + \var{\h u} /2)$ to ensure unbiasedness.
Consequently, finite sample (nearly) unbiased estimates of $u_0$ and $S_0$ are respectively
$\h u+\log(1 + \var{\h u} /2)$, and $\exp\parenth{\h u} (1+\var{\h u} /2)$.
$\var{\h u}$ can be reasonably estimated by $\psi_1(K)$, the variance of the noise $n$ in \eqref{logN} being given in
\eqref{var}. Thus, given the restored log-image $\h u$, our restored image read:
\beq\h S=\mathrm{exp}\parenth{\h u}(1+\psi_1(K)/2) ~.
\label{SB}\eeq

The authors of \cite{Xie02} propose a direct estimate of the bias $b_{\h u}$ using the obvious argument
that the noise $n$ in the log-transformed image has a non-zero mean $\psi_0(K)-\log K$.
A quick study shows that the functions $(1+\psi_1(K)/2)$ and $\exp(\log K-\psi_0(K))$ are very close for $K$ reasonably large.
Thus, the two bias corrections are equivalent.
Even though the bias correction approach we propose can be used in a more general setting.

\section{Full algorithm to suppress multiplicative noise} \label{DA}
Now, piecing together Lemma~\ref{proxf1}, Lemma~\ref{proxf2} and Theorem~\ref{convergence},
we arrive at the multiplicative noise removal algorithm:
{\def\baselinestretch{1}
\medskip\hrule\medskip
\noindent{\bf{Task:}} Denoise  an image $S$ contaminated with multiplicative noise according to \eqref{expo}. \\
\noindent{\bf{Parameters:}} The observed noisy image $S$, number of iterations $N_{\mathrm{DR}}$
(Douglas-Rachford outer iterations) and $N_{\mathrm{FB}}$ (Forward-Backward inner iterations),
stepsizes $\mu_t \in (0,2)$, $0<\beta_t < 1/4$ and $\gamma > 0$, tight-frame transform $W$ and initial threshold $T$
(e.g. $T=2\sqrt{\psi_1(K)}$), regularization parameters $\lambda_{0,1}$ associated to the sets $I_{0,1}$.\\
\noindent{\bf{Specific operators:}}
\bit
\item[(a)] $\ST^{\gamma\lambda_i}(z)=\Big(\max\big\{0,\;z[i] - \gamma\lambda_i\sign(z[i])\big\}\Big)_{i\in I},
    ~~~\forall z\in\RR^{\# I}$.
\item[(b)] $\disp{P_{\overline{B}^{\;1}_\infty(\X)}(z)[i,j] =
\begin{cases}
z[i,j]            & \mbox{if} ~ |z[i,j]| \leq 1;\\
\disp{\frac{z[i,j]}{|z[i,j]|}}     & \mbox{otherwise},
\end{cases}}$~~~~ $\forall (i,j) \in \O$.
\item[(c)] $\nablaD$ and $\divD$---the discrete versions of the continuous operators ~$\nabla$ and div.
\item[(d)] $\psi_1(\cdot)$ defined according to \eqref{polyG} (built-in Matlab function, otherwise see \cite{Press92}).
\eit
\noindent{\bf{Initialization:}}
\begin{itemize}
\item Compute $v=\log S$ and transform coefficients $y = \W v$. Hard-threshold $y$ at $T$ to get $y_\HT$.
Choose an initial $x^{(0)}$.
\end{itemize}
\noindent{\bf{Main iteration:}} \\
\noindent{\bf{For}} $t=1$ {\bf{to}} $N_{\mathrm{DR}}$,
\begin{itemize}
\item[(1)] Inverse curvelet transform of $x^{(t)}$ according to $u^{(t)}=\Wi x^{(t)}$.
\item[(2)]  Initialize $z^{(0)}$; {\bf{For}} $s=0$  {\bf{to}}  $N_{\mathrm{FB}}-1$
\bit
\item[~]
$z^{(s+1)} = P_{\overline{B}^{\;1}_\infty(\X)}\parenth{z^{(s)} + \beta_t \nablaD\parenth{\divD(z^{(s)}) - \frac{c}{\gamma} u^{(t)}}}$.
\eit
\item[(3)] Set $z^{(t)}=z^{(N_{\mathrm{FB})}}$.
\item[(4)] Compute $w^{(t)}=c^{-1}\gamma~\divD(z^{(t)})$.
\item[(5)] Forward curvelet transform: $\alpha^{(t)} = \W w^{(t)}$.
\item[(6)] Compute $r^{(t)}=\rprox_{\gamma \Phi}(x^{(t)}) =  x^{(t)} - 2\alpha^{(t)} $.
\item[(7)] By \eqref{rproxPsi} compute
$\disp{q^{(t)}=\left(\rprox_{\gamma \Psi}\circ\rprox_{\gamma \Phi}\right)x^{(t)}
=2\parenth{y_\HT[i] + \ST^{\gamma\lambda_i}\parenth{r^{(t)}[i] - y_\HT[i]}}_{i \in I}-r^{(t)} ~.}$
\item[(8)] Update $x^{(t+1)}$ using \eqref{DRproxalgo}: ~
$\disp{x^{(t+1)} =\parenth{1-\frac{\mu_t}{2}}x^{(t)}+\frac{\mu_t}{2}q^{(t)}}~.$
\eit
\noindent{\bf{End main iteration}} \\
\noindent{\bf{Output:}} Denoised image $\h{S}=\mathrm{exp}\parenth{\Wi x^{( N_{\mathrm{DR }})}}(1+\psi_1(K)/2)$.
\hrule\medskip}


\begin{remark}[Computation load]\rm
The bulk of computation of our denoising algorithm is invested in applying
$\W$ and its pseudo-inverse $\Wi$. These operators are of course never constructed explicitly,
rather they are implemented as fast implicit analysis and synthesis operators.
Each application of $\W$ or $\Wi$ cost $\mathcal{O}(N\log N)$ for the second generation
curvelet transform of an $N$-pixel image \cite{CandesFDCT05}.
If we define $N_{\mathrm{DR}}$ and $N_{\mathrm{FB}}$ as the number of iterations in the
 Douglas-Rachford algorithm and the forward-backward sub-iteration, the computational complexity
 of the denoising algorithm is of order $N_{\mathrm{DR}}N_{\mathrm{FB}}2N\log N$ operations.
\end{remark}

\section{Experiments}
\label{Exper}
In all experiments carried out in this paper, our algorithm was run
using second-generation curvelet tight frame along
with the following set of parameters:
$\forall t, \mu_t \equiv 1$, $\beta_t=0.24$, $\gamma = 10$ and $N_{\mathrm{DR}}=50$.
The initial threshold $T$ was set to $2\sqrt{\psi_1(K)}$.
For comparison purposes, some very recent multiplicative noise removal algorithms from the literature are considered:
the AA algorithm \cite{AubertAujol08} minimizing the criterion in \eqref{AA},
and the Stein-Block denoising method \cite{Chesneau08} in the curvelet domain,
applied on the log transformed image.
The latter is a sophisticated shrinkage-based denoiser that thresholds the coefficients by blocks rather than individually,
and has been shown to be nearly minimax over a large class of images in presence of additive bounded noise
(not necessarily Gaussian nor independent).
 We also tried the ``naive'' method, called L2-TV, where
$\h u$ minimizes \eqref{L2TV} and the denoised image is given after bias correcion according to \eqref{SB}.
No without surprise, one realizes that the results are quite good, even though some persistent outliers remain quite visible.
 This again raises the persistent question of relevance of PSNR (or even MAE) as a measure of perceptual restoration quality.
For fair comparison, the hyperparameters for all competitors were tweaked to reach their best level of performance on
each noisy realization.

The denoising algorithms were tested on three images: Shepp-Logan phantom, Lena
and Boat all of size $256 \times 256$ and with gray-scale in the range $[1,256]$.
For each image, a noisy observation is generated by multiplying the original image by a realization
of noise according to the model in \eqref{expo}-\eqref{Gamma} with the choice $\mu=1$ and $K=10$.
For a $N$-pixel noise-free image $S_0$ and its denoised version by any algorithm $\h{S}$,
the denoising performance is measured in terms of peak signal-to-noise ratio (PSNR) in decibels (dB)
\[
\mathrm{PSNR} = 20 \log_{10} \frac{\sqrt{N}\|S_0\|_\infty}{\norm{\h{S}-S_0}} ~ \mathrm{dB} ~,
\]
and mean absolute-deviation MAE
\[
\mathrm{MAE} = \norm{\h{S}-S_0}_1/N ~.
\]
The results are depicted in Fig.~\ref{fig:phantom}, Fig.~\ref{fig:lena} and Fig.~\ref{fig:boat}.
Our denoiser clearly outperforms its competitors both visually and quantitatively as revealed by the PSNR and MAE values.
The PSNR improvement brought by our approach is up to 4dB on the Shepp-Logan phantom, and is $\sim 1$dB for Lena and Boat.
Note also that a systematic behavior of AA algorithm is its tendency to lose some important details and the persistence
of a low-frequency ghost as it can be seen on the error maps on the third row in Figs.~\ref{fig:lena}~and~\ref{fig:boat}.

\setlength{\unitlength}{1mm}
\noindent\begin{figure}[ht]\vspace{-0.8cm}\centering
\begin{tabular}{cc}
\epsfig{figure=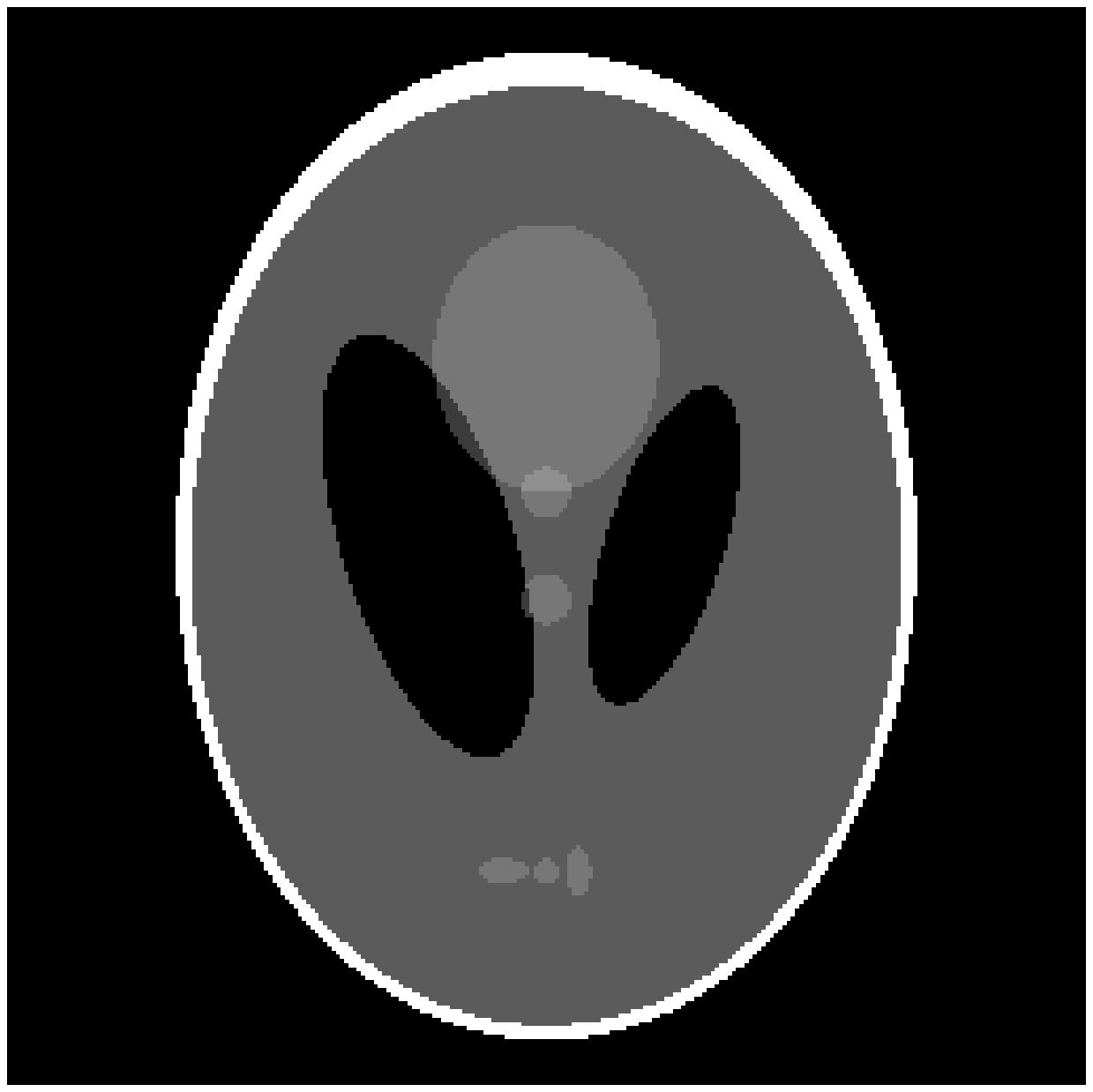,width=6.2cm,height=6.2cm}&
\epsfig{figure=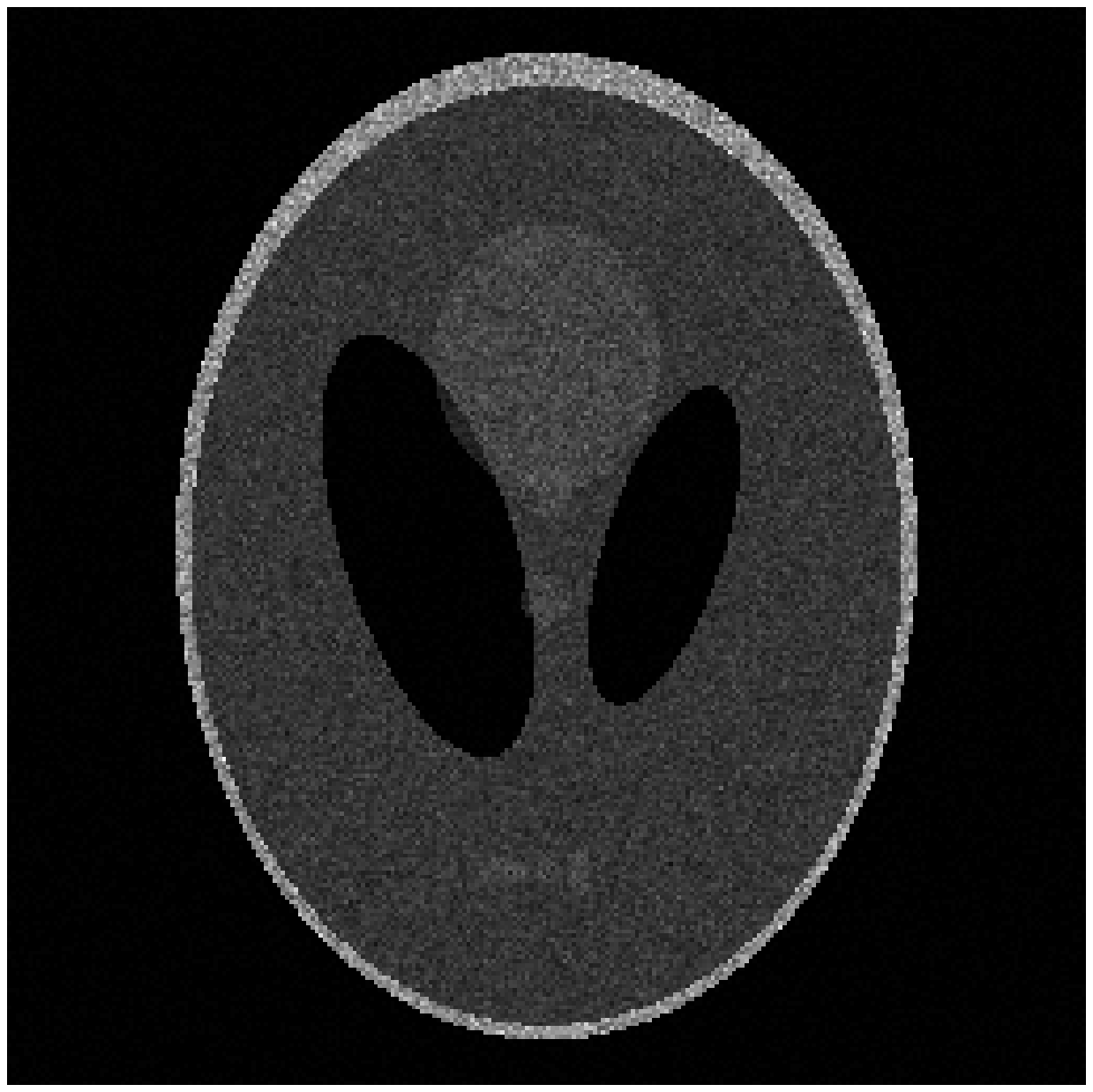,width=6.2cm,height=6.2cm}\\
{(a) Shepp-Logan ($256 \times 256$)}&{(b) Noisy $\mu=1, ~K=10$}\\
\epsfig{figure=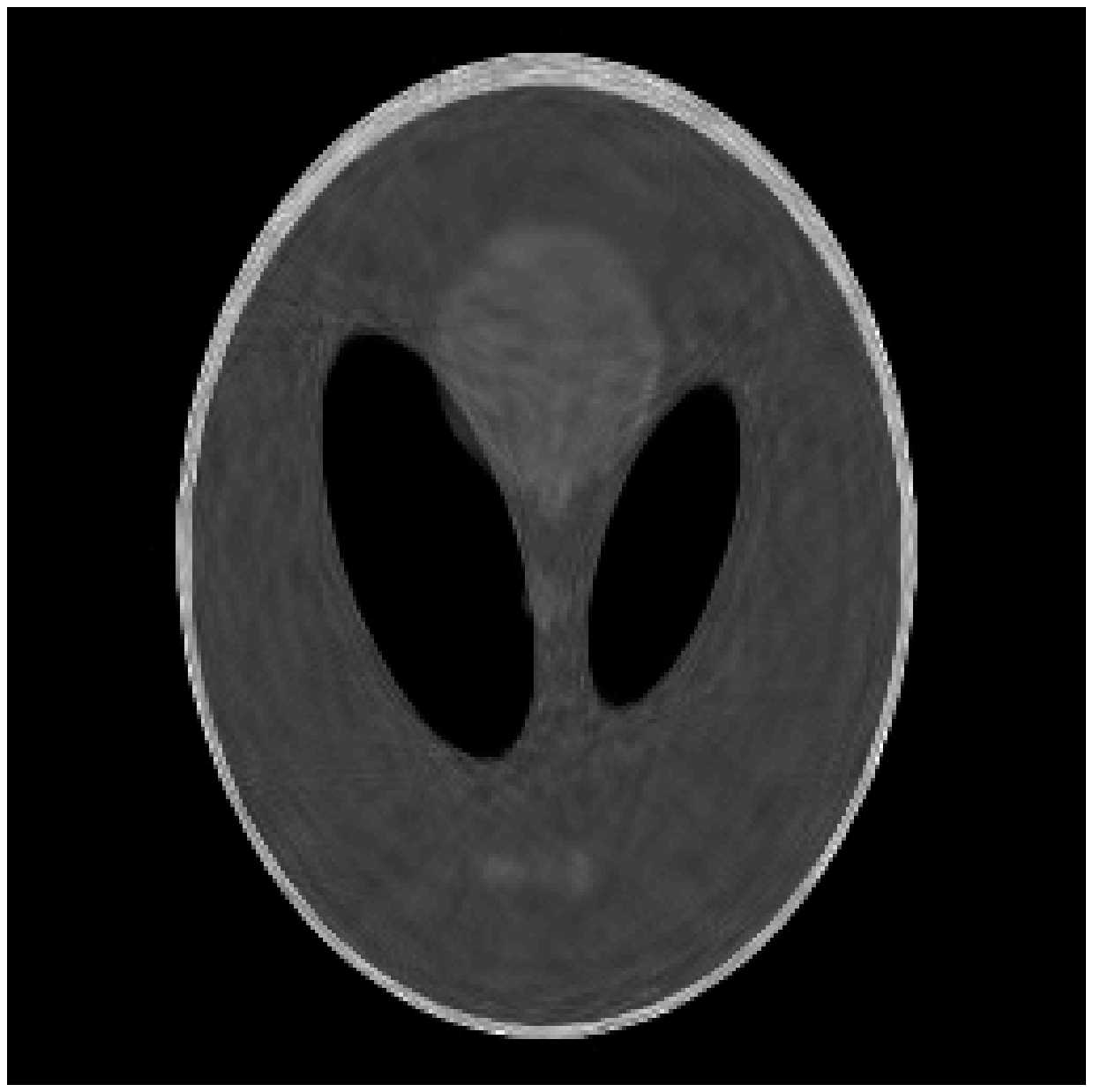,width=6.2cm,height=6.2cm}&
\epsfig{figure=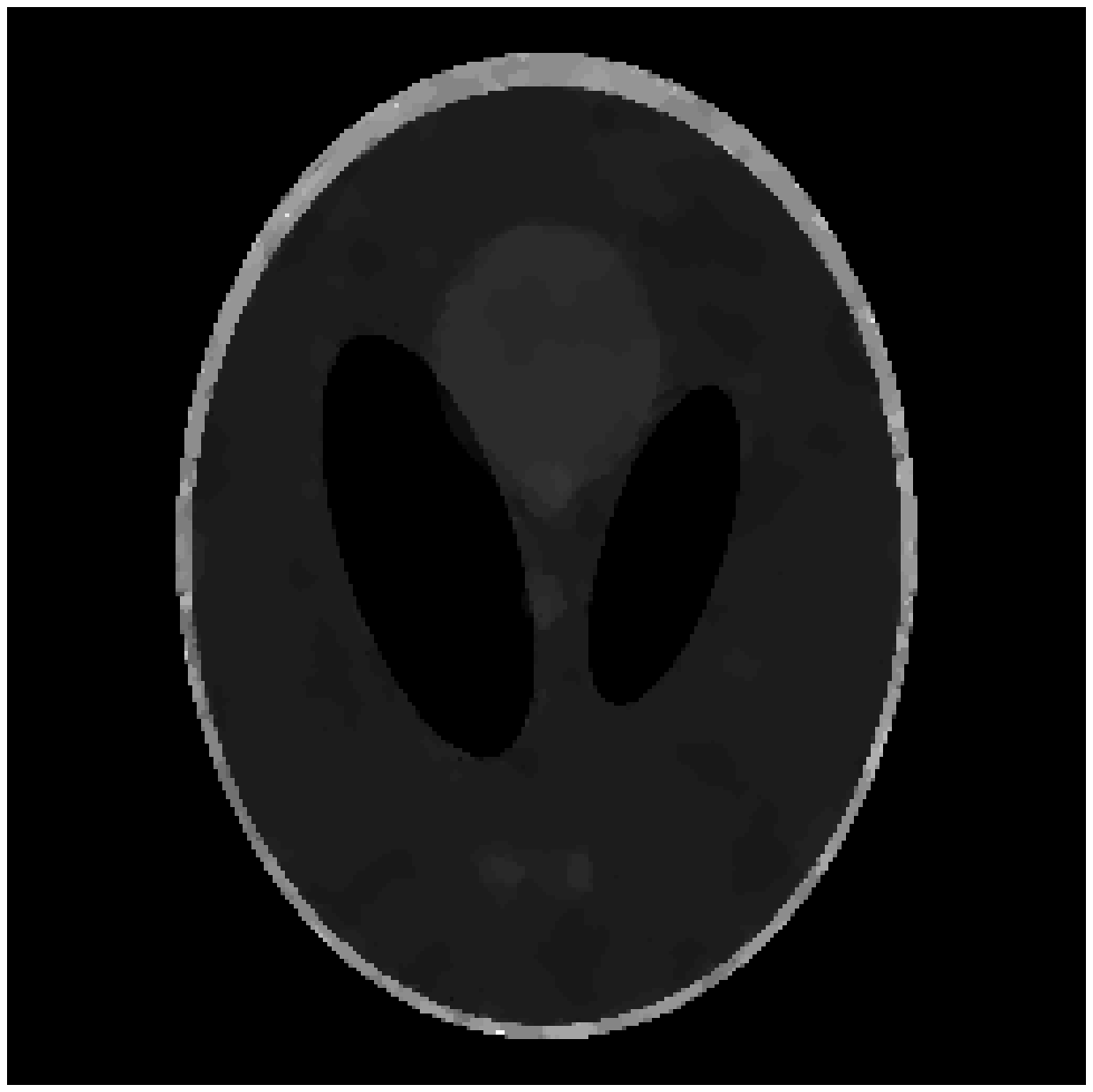,width=6.2cm,height=6.2cm}\\
{(c) Stein-block thresholding}&{(d) Our method}\\
\epsfig{figure=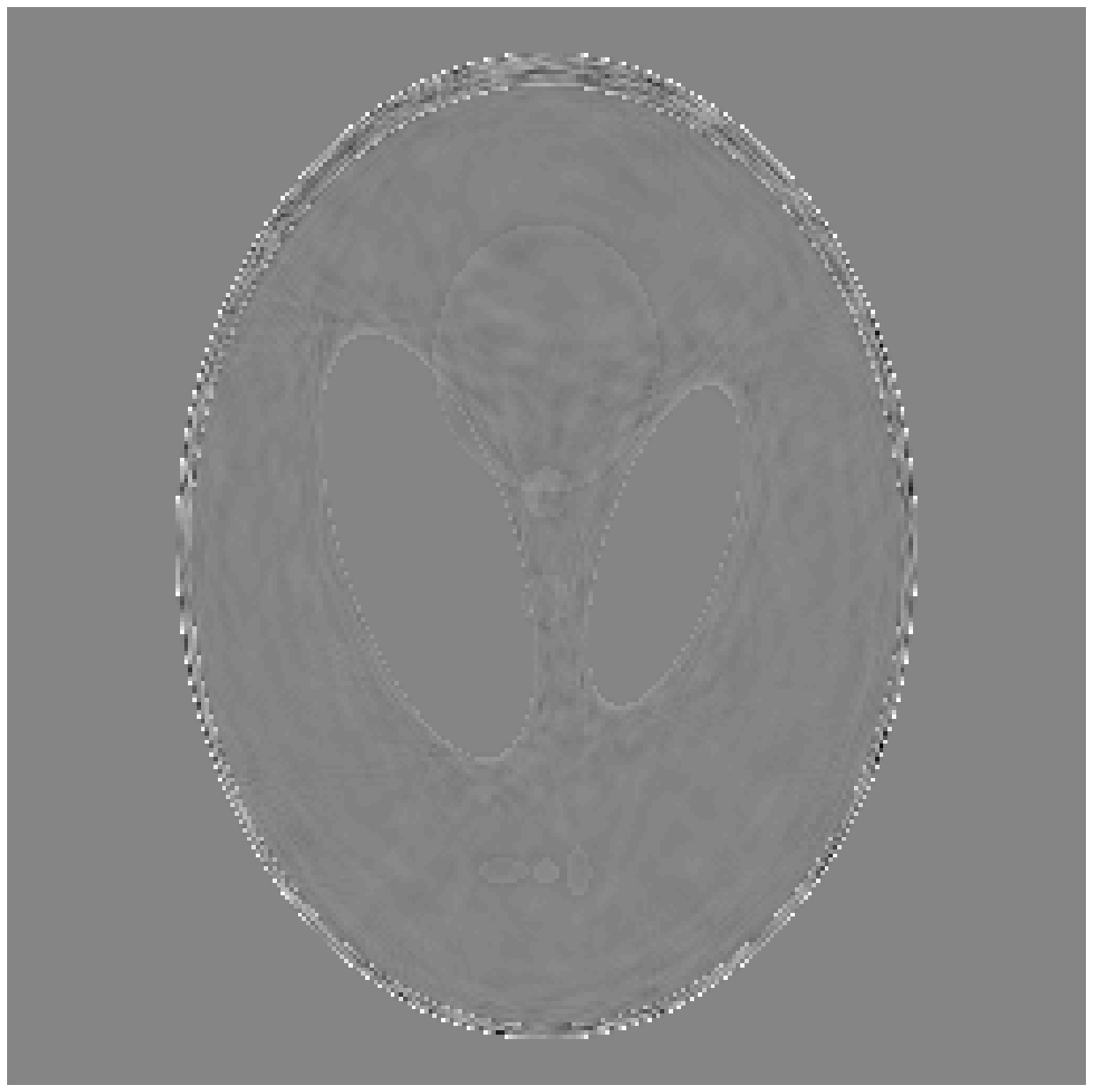,width=6.2cm,height=6.2cm}&
\epsfig{figure=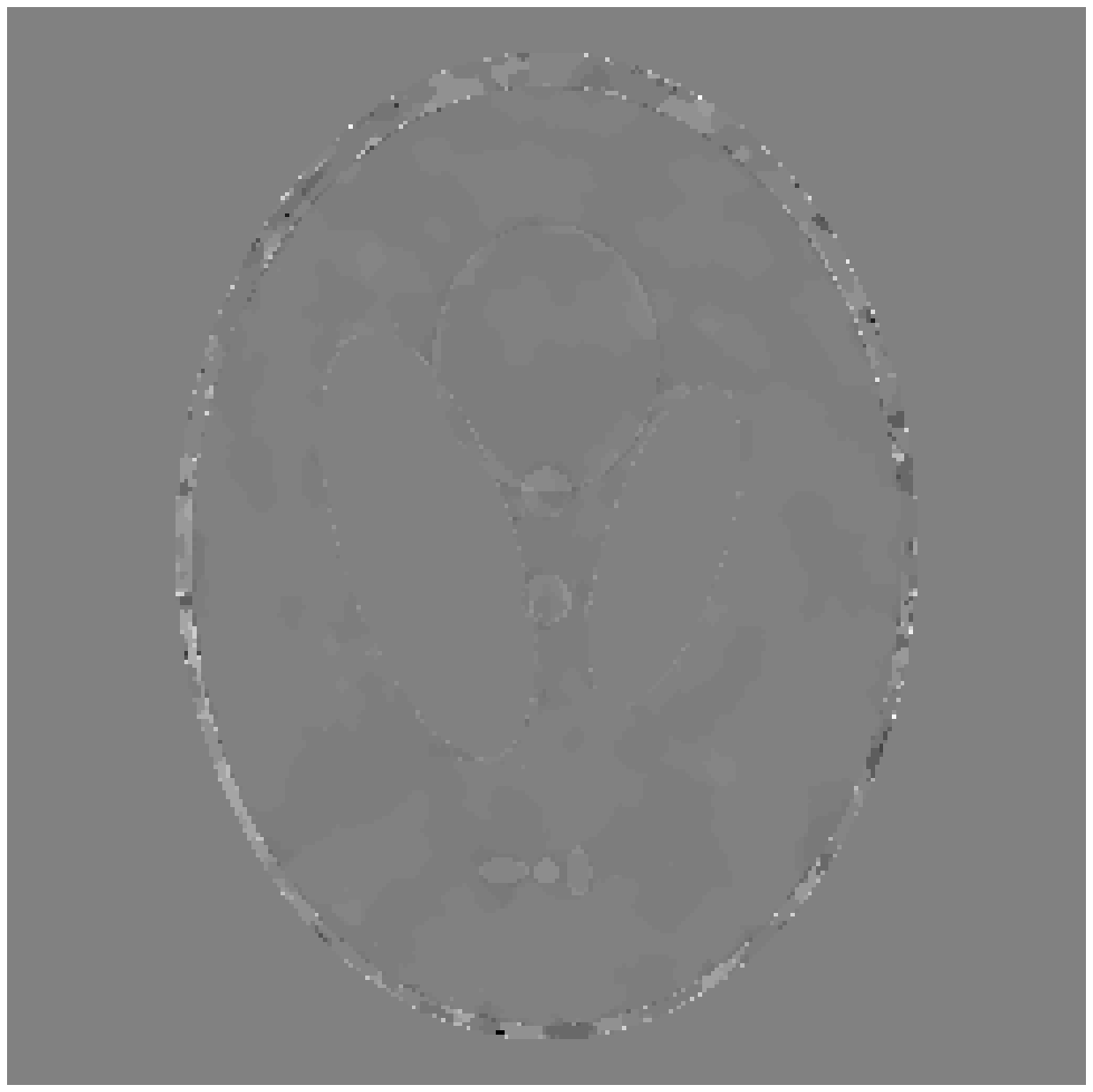,width=6.2cm,height=6.2cm}\\
{(e)}&{(f)}
\end{tabular}
\caption{Performance comparison with Shepp-Logan phantom ($256 \times 256$).
(a) Original. (b) Noisy $\mu=1, K=10$. (c) Denoised with Stein-block thresholding in
the curvelet domain \cite{Chesneau08} PSNR=24.73dB, MAE=4.
(d) Denoised with our algorithm PSNR=31.25dB, MAE=1.87. (e)-(f) Errors (restored $-$ original) for (c)-(d).}
\label{fig:phantom}
\end{figure}

\noindent\begin{figure}[ht]\vspace{-0.8cm}
\begin{tabular}{ccc}
\epsfig{figure=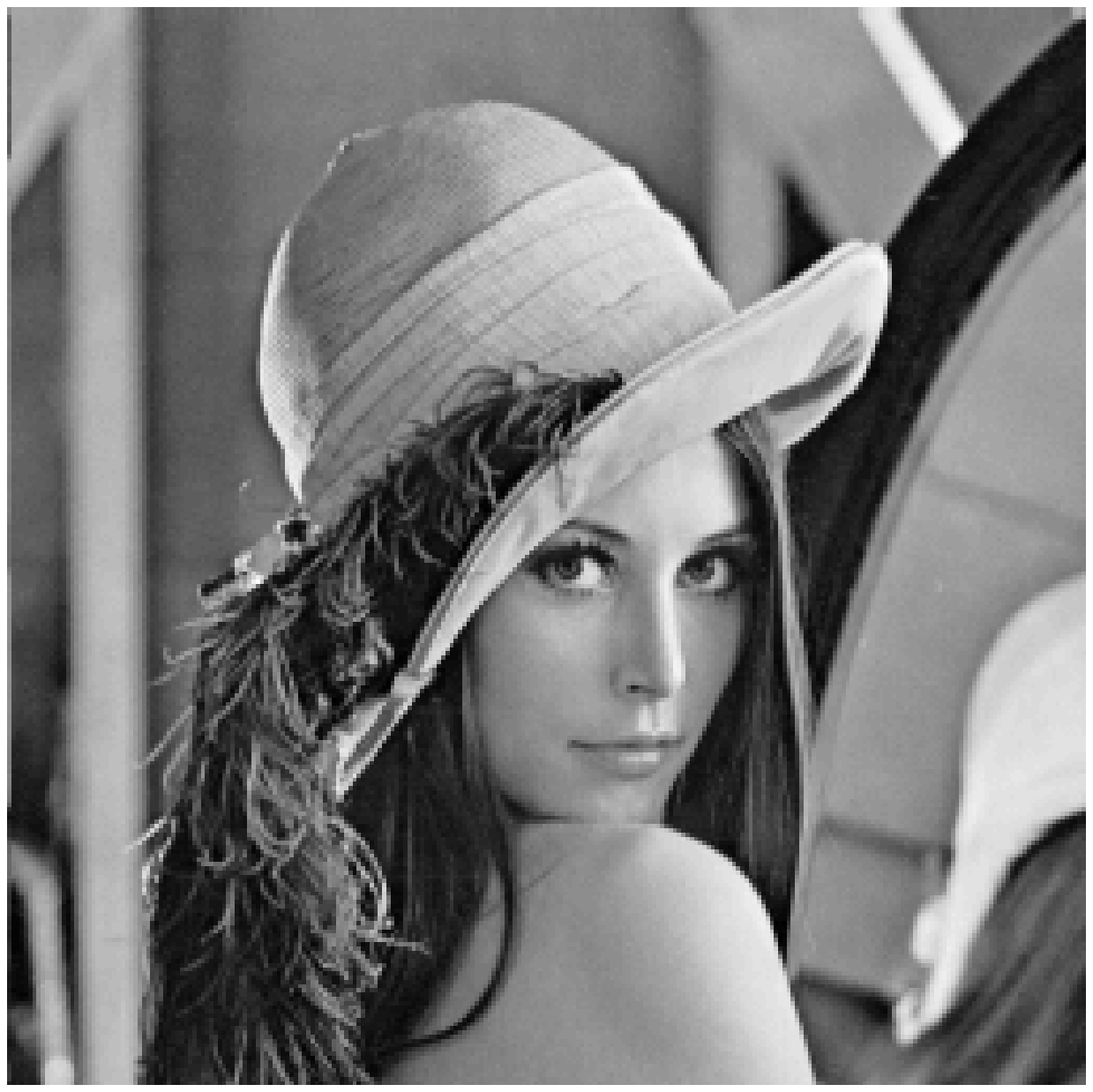,width=5cm,height=5cm}&
\epsfig{figure=lenanoisy.eps,width=5cm,height=5cm}&
\epsfig{figure=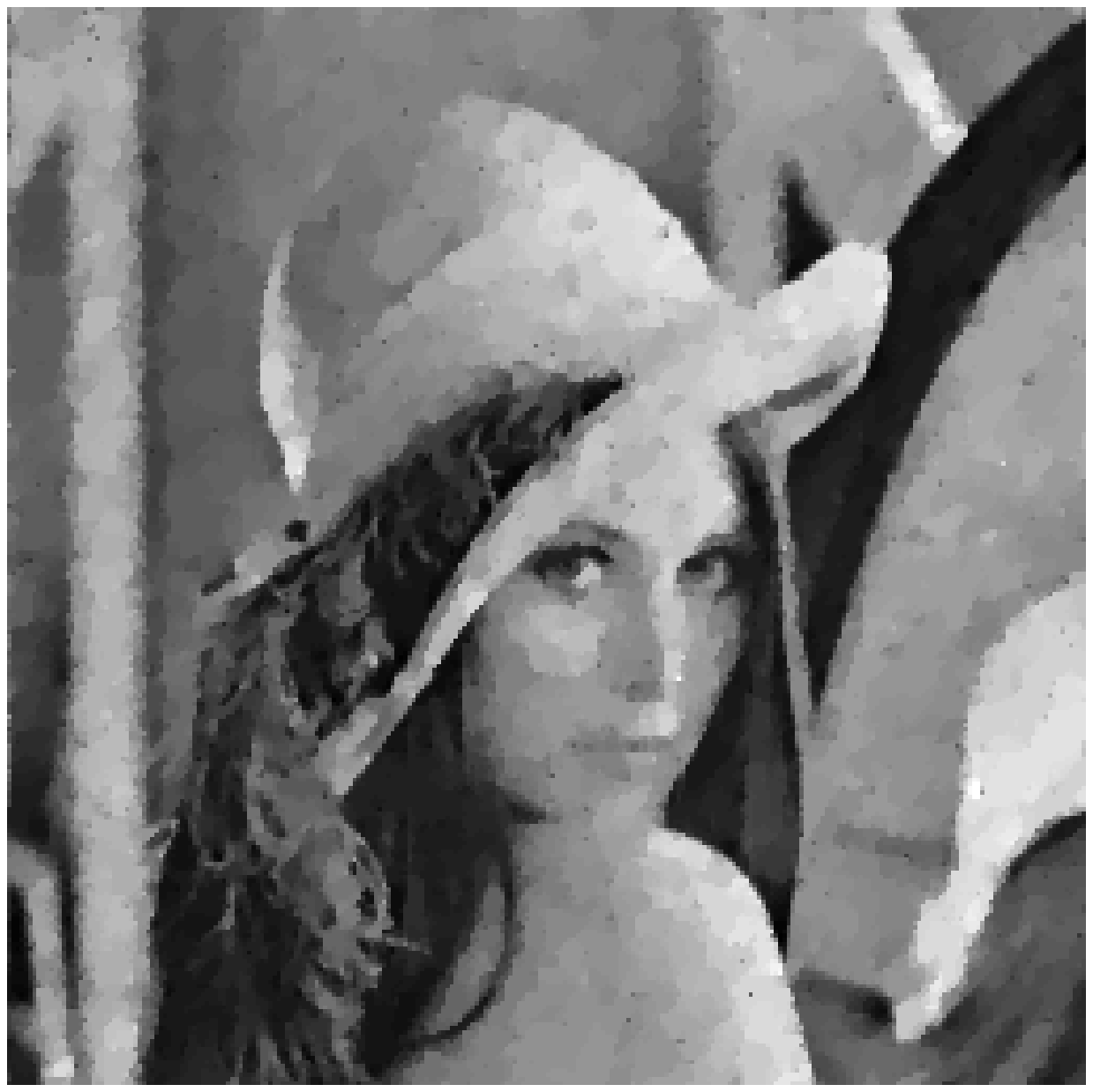,width=5cm,height=5cm}\\
{(a) Lena ($256\times 256$)---original}&{(b) Noisy: $\mu=1, ~K=10$}&{(c) L2-TV}\\
{}&{}&{{\sc psnr}=26.22  d\sc{b,~mae}=8.5  }\\
\epsfig{figure=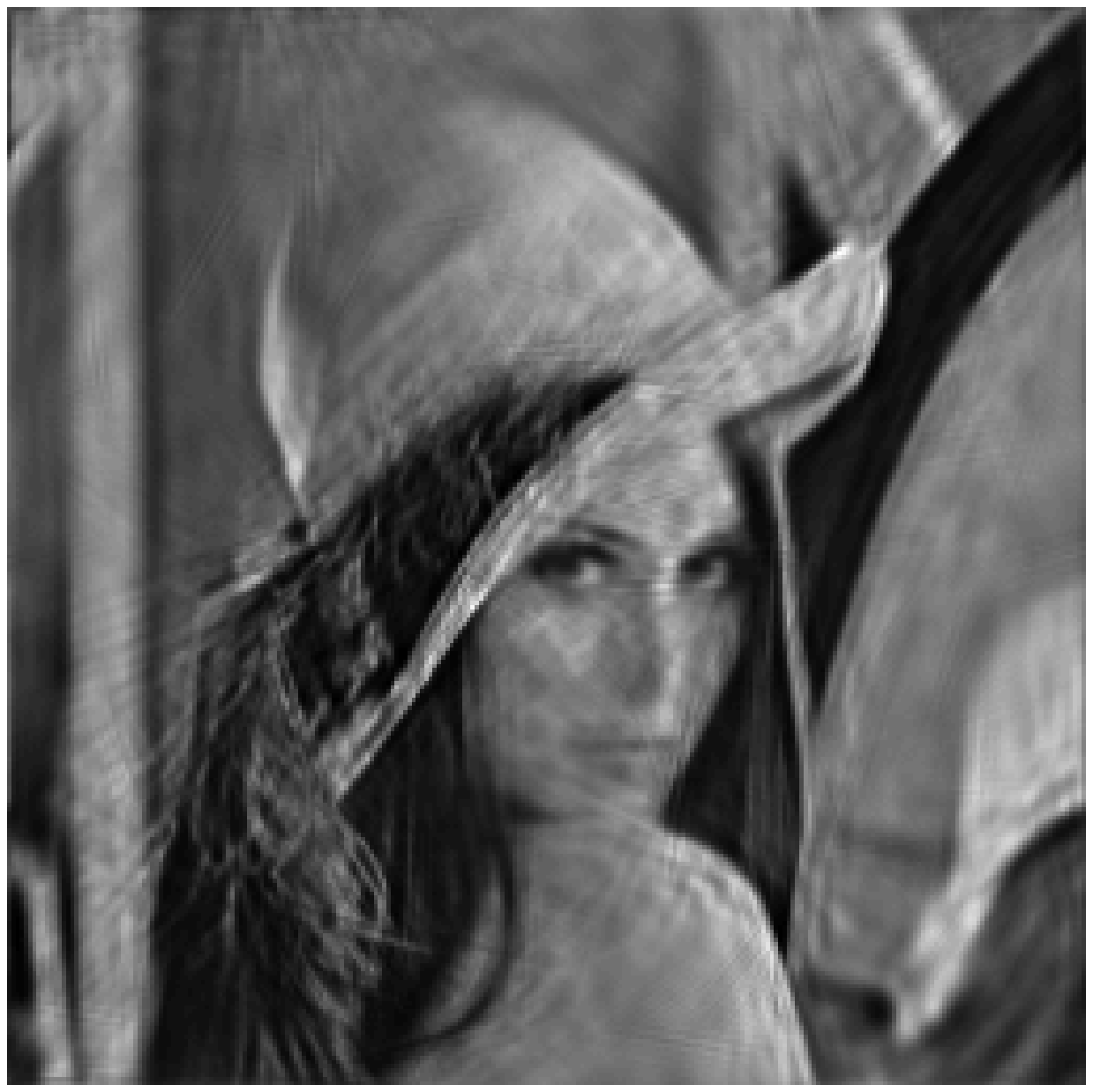,width=5cm,height=5cm}&
\epsfig{figure=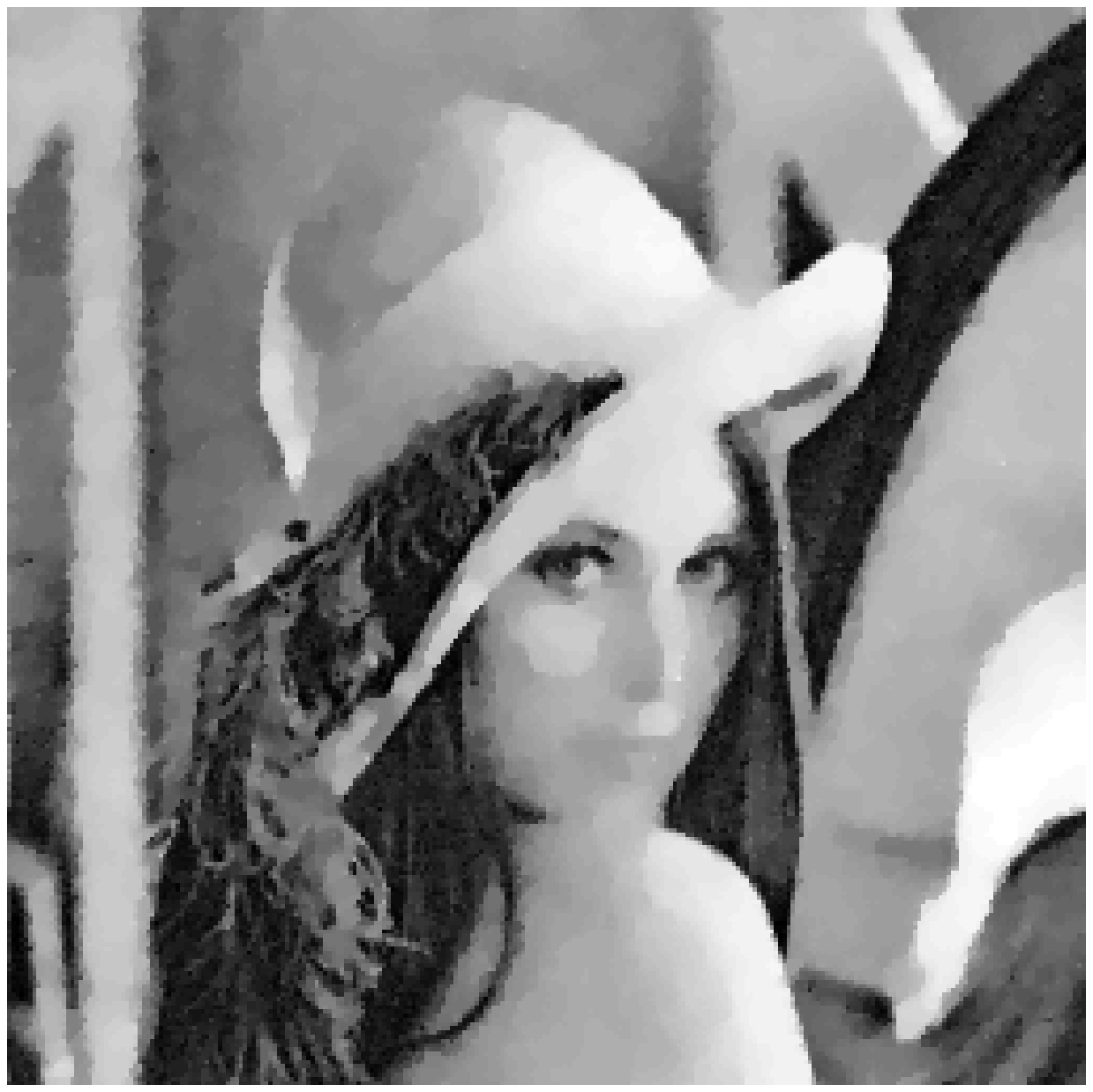,width=5cm,height=5cm}&
\epsfig{figure=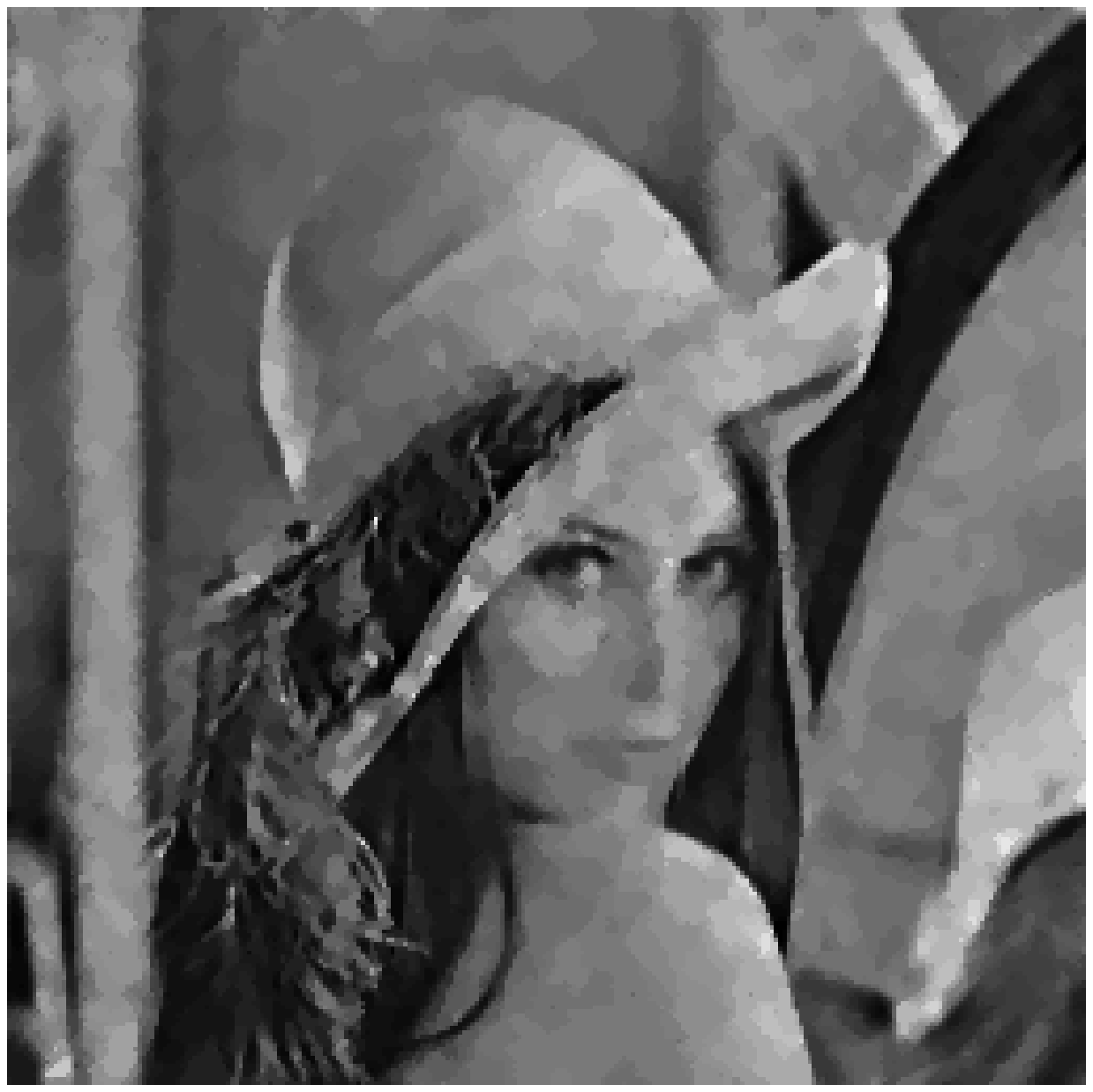,width=5cm,height=5cm}\\
{(d) Stein-block thresholding \cite{Chesneau08}}&{(e) AA algorithm \cite{AubertAujol08}}&{(f) Our method}\\
{{\sc psnr}=25.49 d\sc{b,~mae}=9.45 }&{{\sc psnr}=25.37 d\sc{b,~mae}=9.41 }&{{\sc psnr}=26.05 d\sc{b,~mae}=8.8 }\\
\epsfig{figure=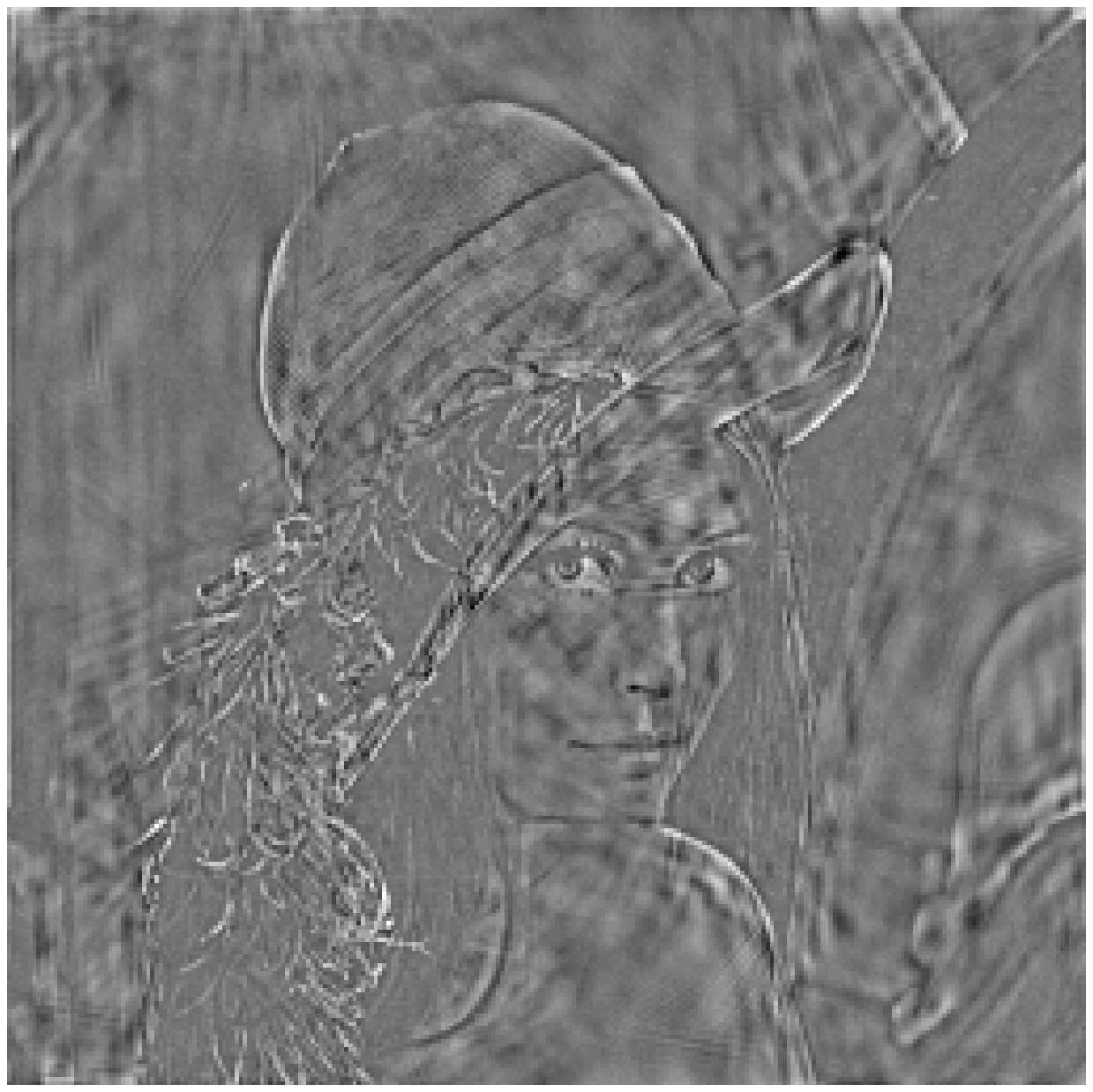,width=5cm,height=5cm}&
\epsfig{figure=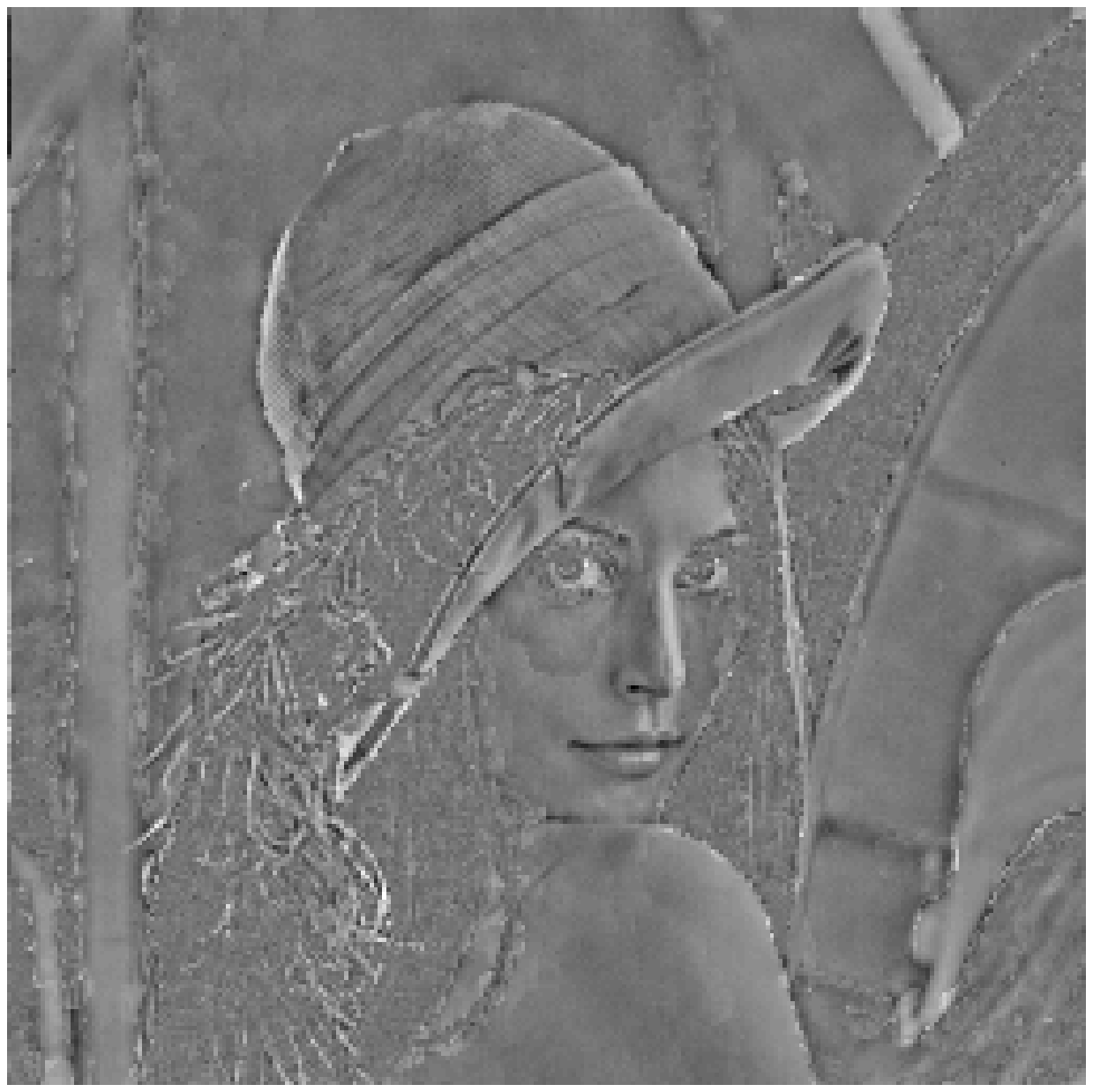,width=5cm,height=5cm}&
\epsfig{figure=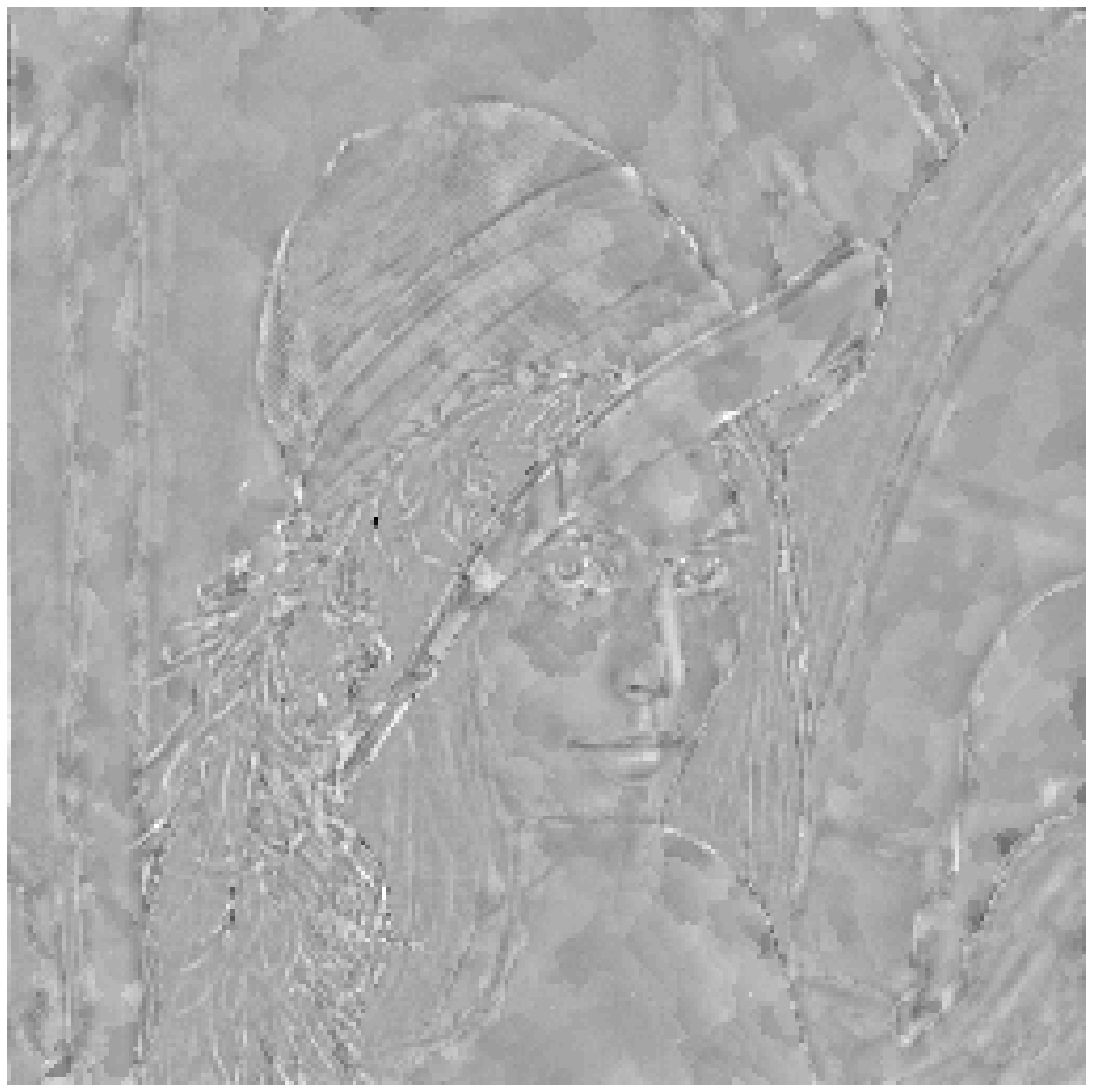,width=5cm,height=5cm}
\end{tabular}
\caption{Comparative restoration of the noisy Lena in (b) using modern methods.
Note that the algorithm in (c) is initialized with the log-data and that
the restoration in (d) is done in the curvelet domain.
The images on the last row show the error (restored $-$ original) for (d), (e) and (f).}
\label{fig:lena}
\end{figure}

\noindent\begin{figure}[ht]\vspace{-0.8cm}
\begin{tabular}{ccc}
\epsfig{figure=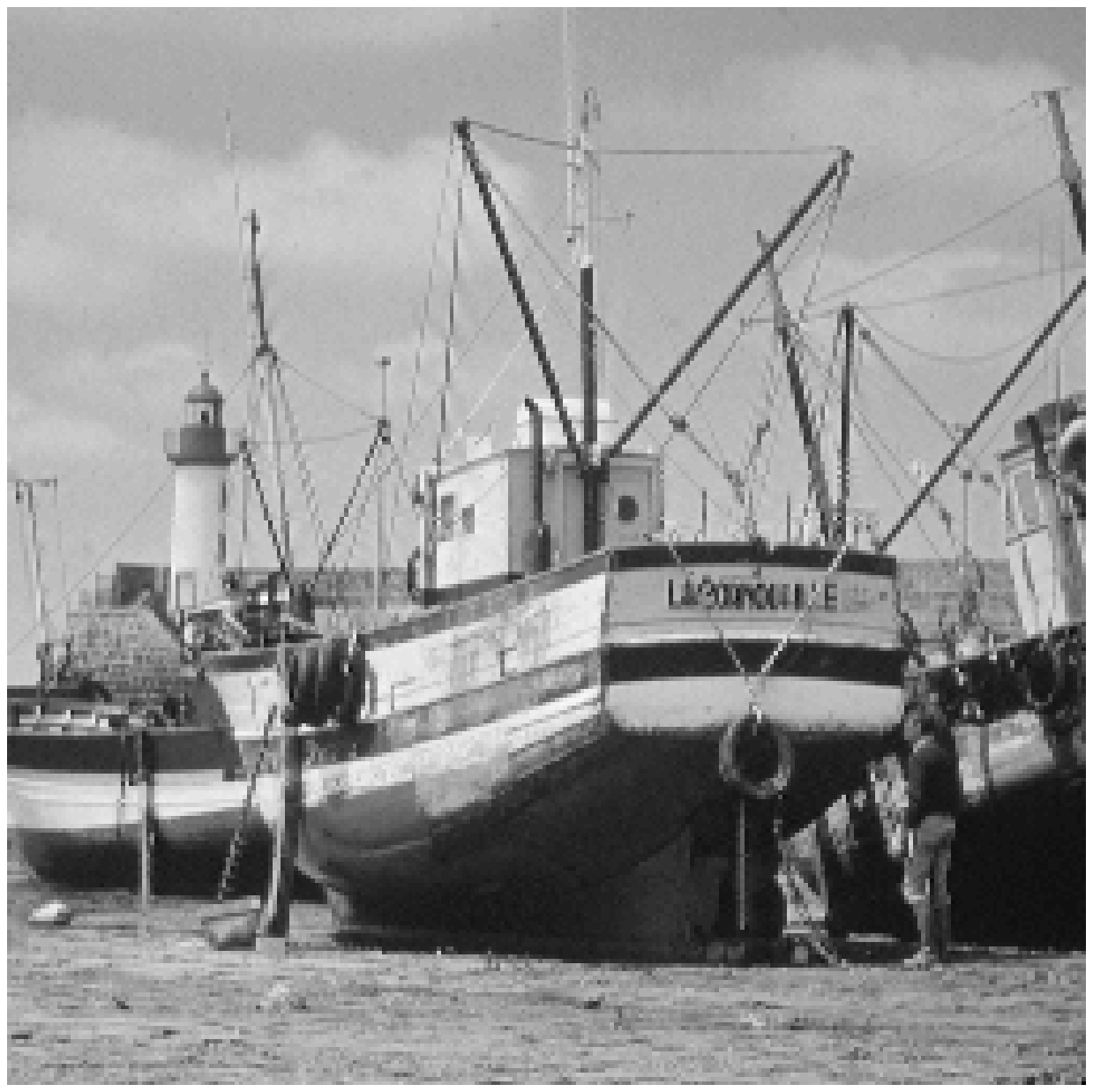,width=5.1cm,height=5.1cm}&
\epsfig{figure=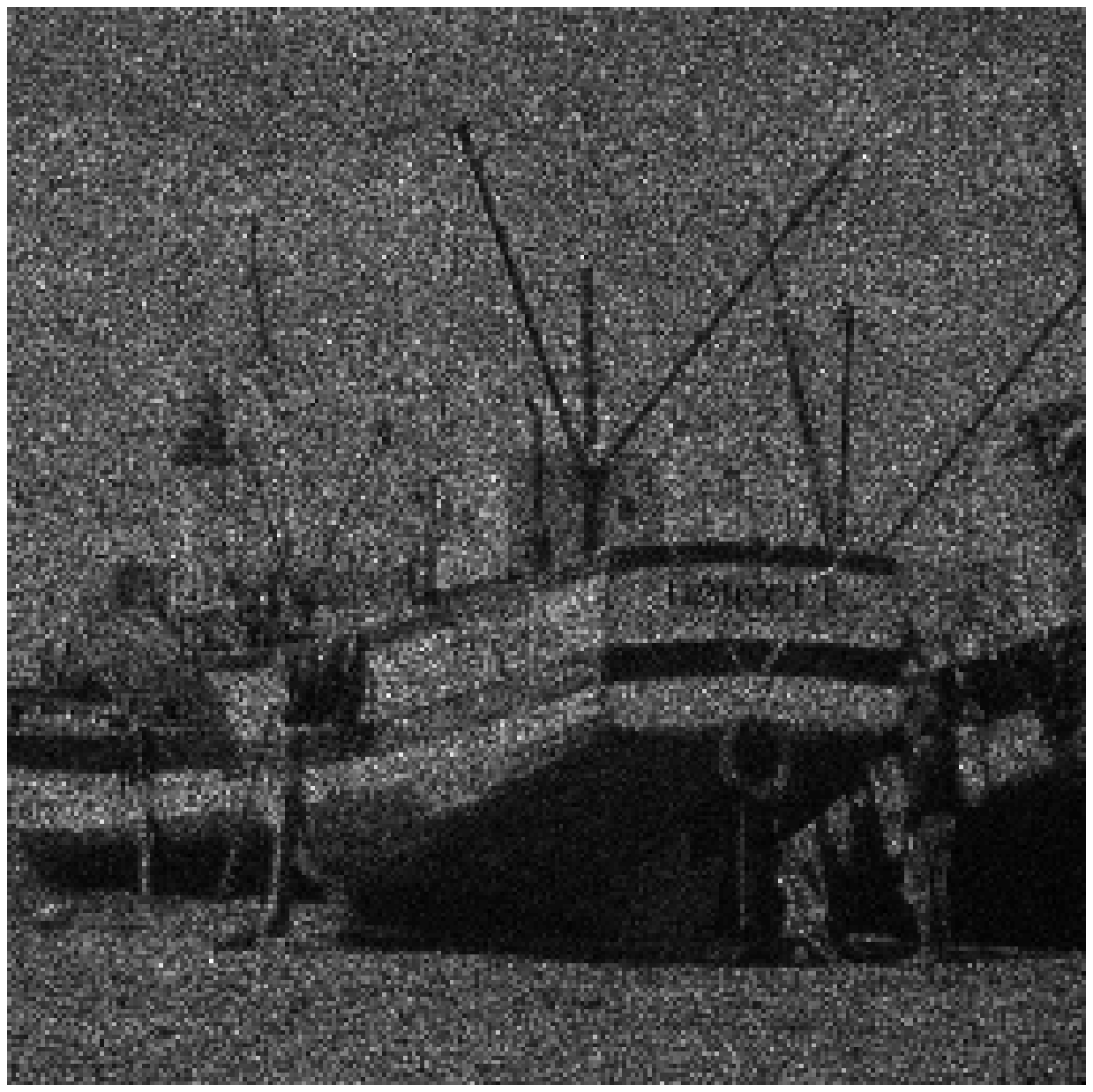,width=5.1cm,height=5.1cm}&
\epsfig{figure=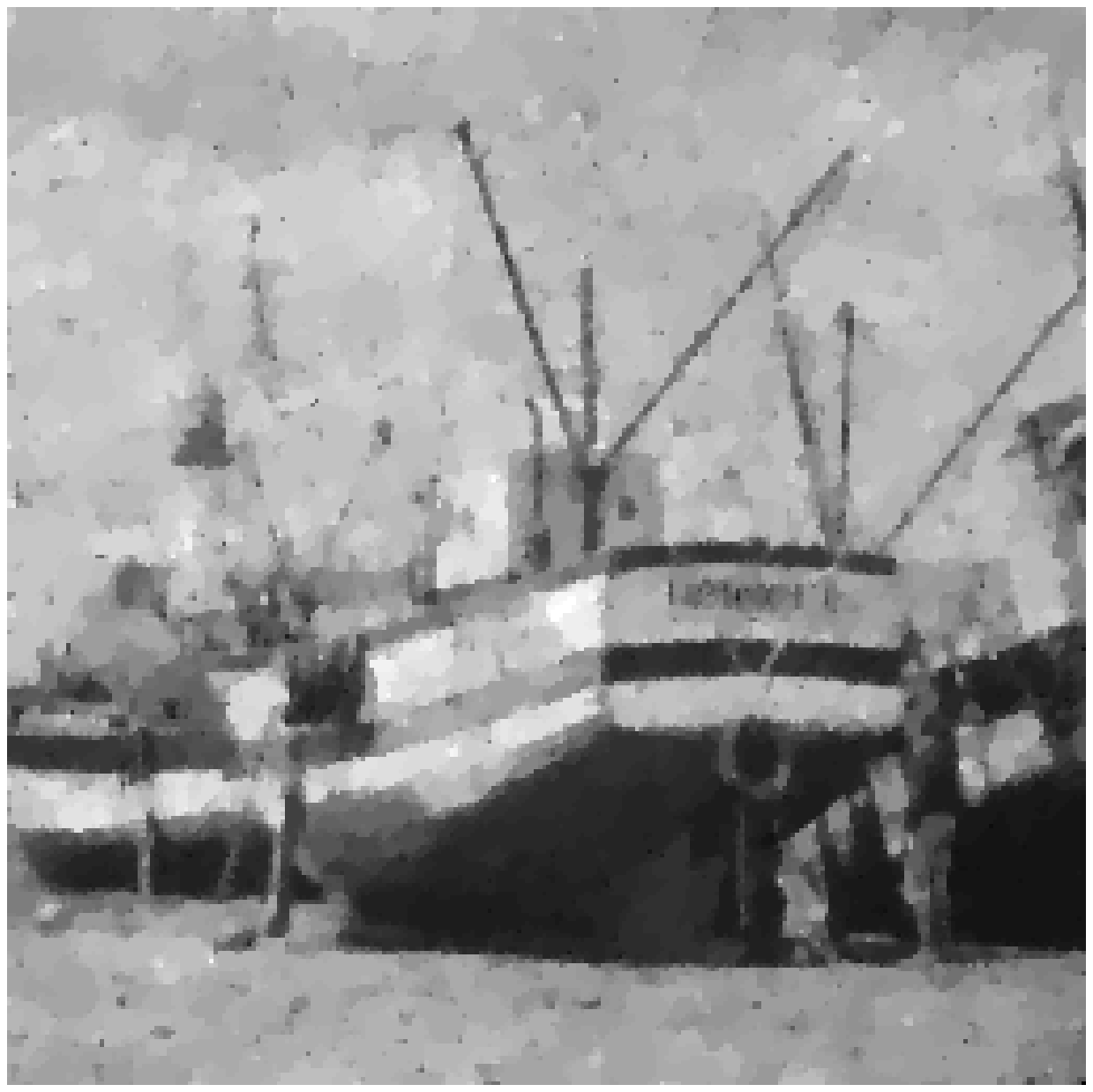,width=5.1cm,height=5.1cm}\\
{(a) Boat ($256\times 256$)---original}&{(b) Noisy: $\mu=1, ~K=10$}&{(c) L2-TV}\\
&{see \eqref{expo}-\eqref{Gamma}}&{{\sc psnr}=24.118d\sc{b,~mae}=10.202  }\\
\epsfig{figure=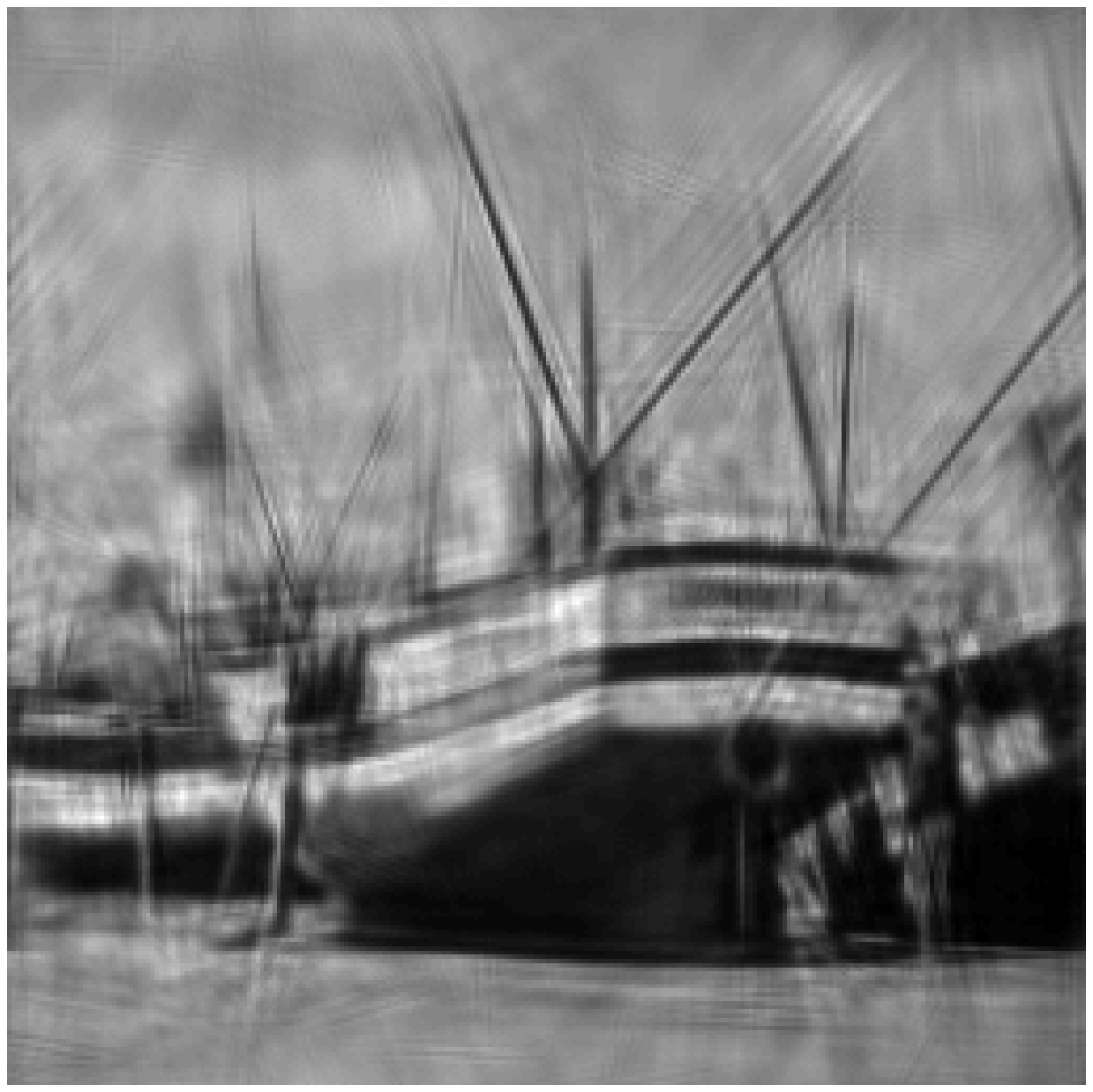,width=5.1cm,height=5.1cm}&
\epsfig{figure=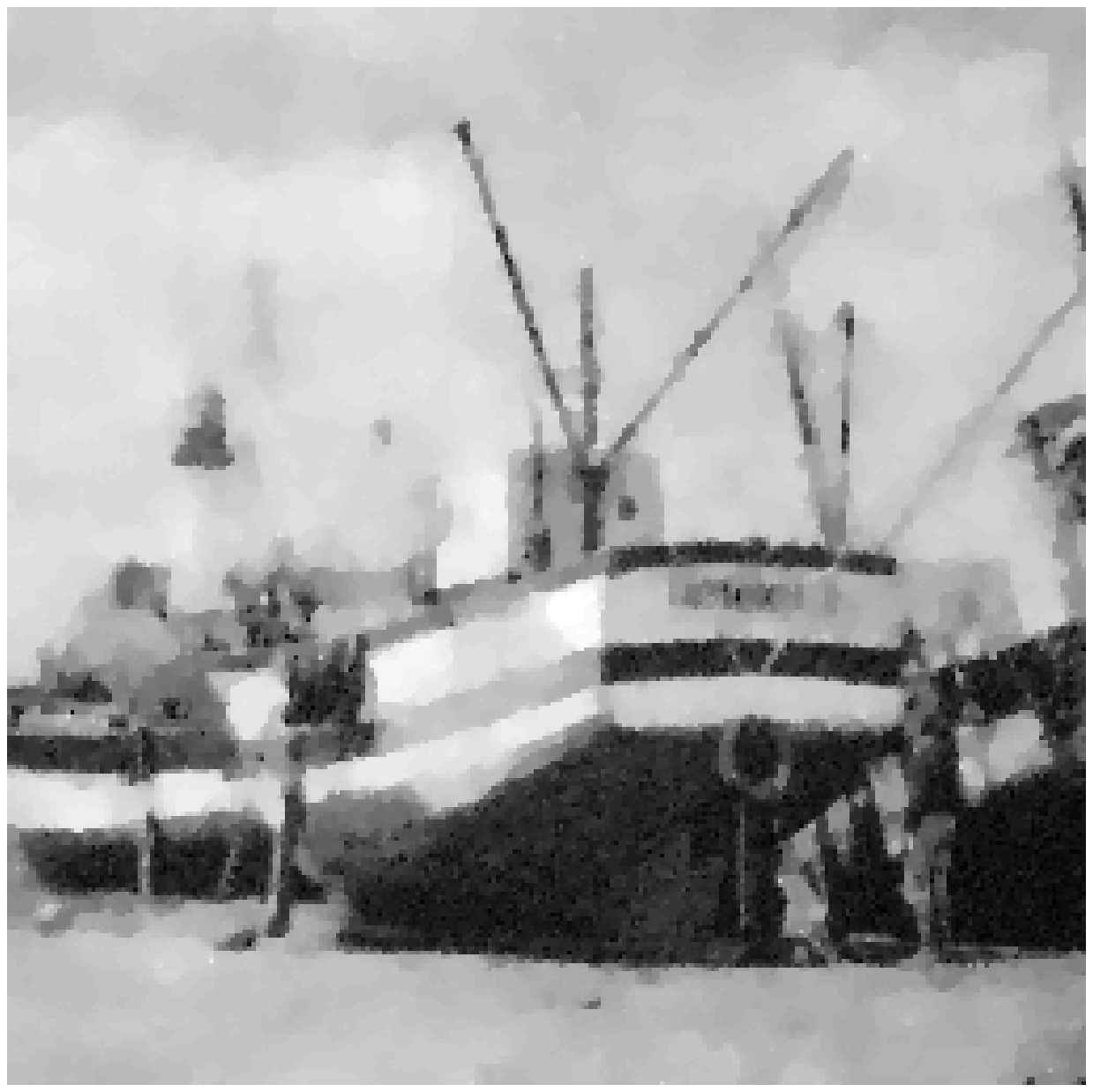,width=5.1cm,height=5.1cm}&
\epsfig{figure=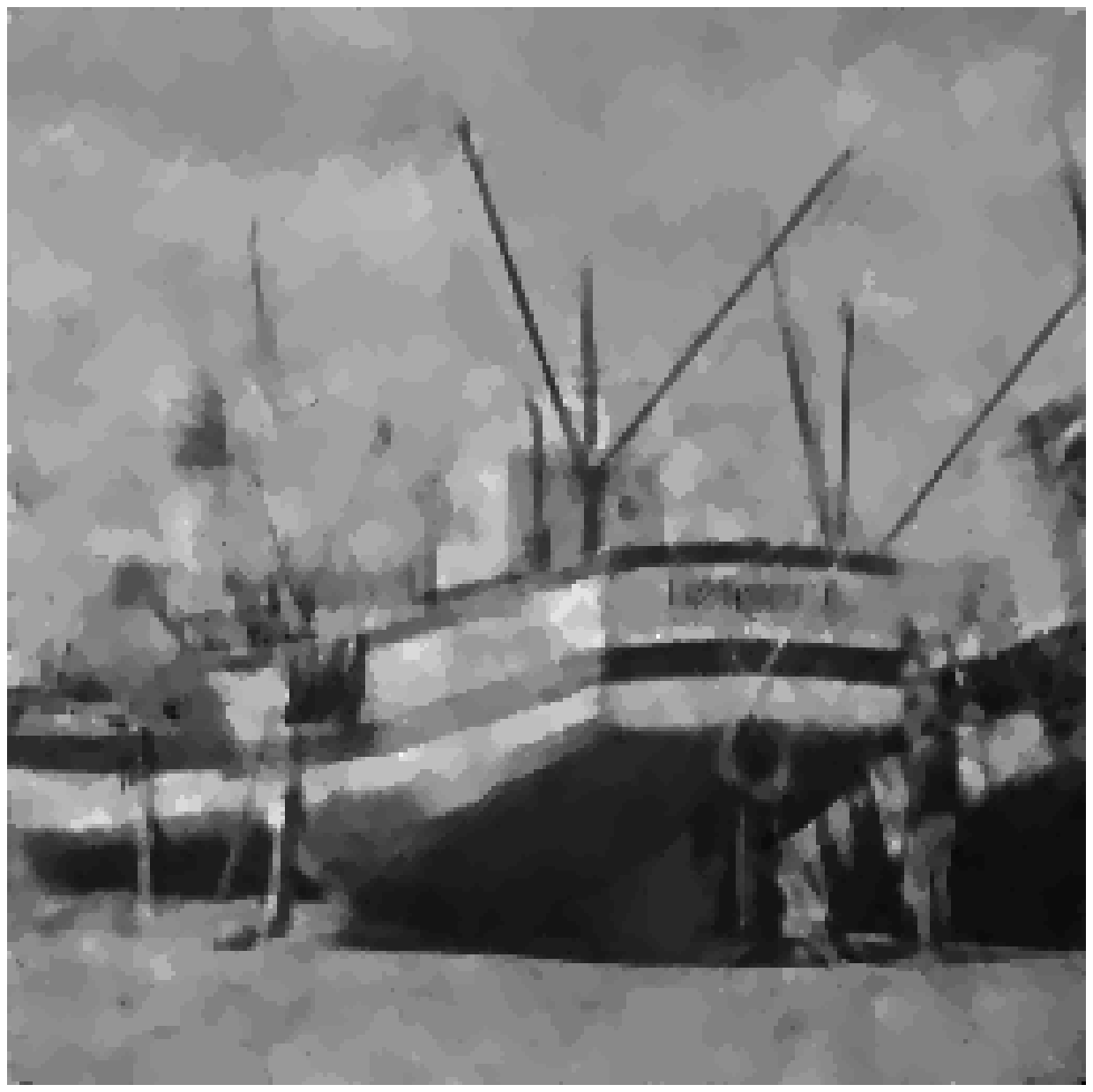,width=5.1cm,height=5.1cm}\\
{(d) Stein-block thresholding \cite{Chesneau08}}&{(e) AA algorithm \cite{AubertAujol08}}&{(f) Our method}\\
{{\sc psnr}=23.57d\sc{b,~mae}=10.98}&{{\sc psnr}=23.36d\sc{b,~mae}=11.08}&{{\sc psnr}=24.12d\sc{b,~mae}=10.2}\\
\epsfig{figure=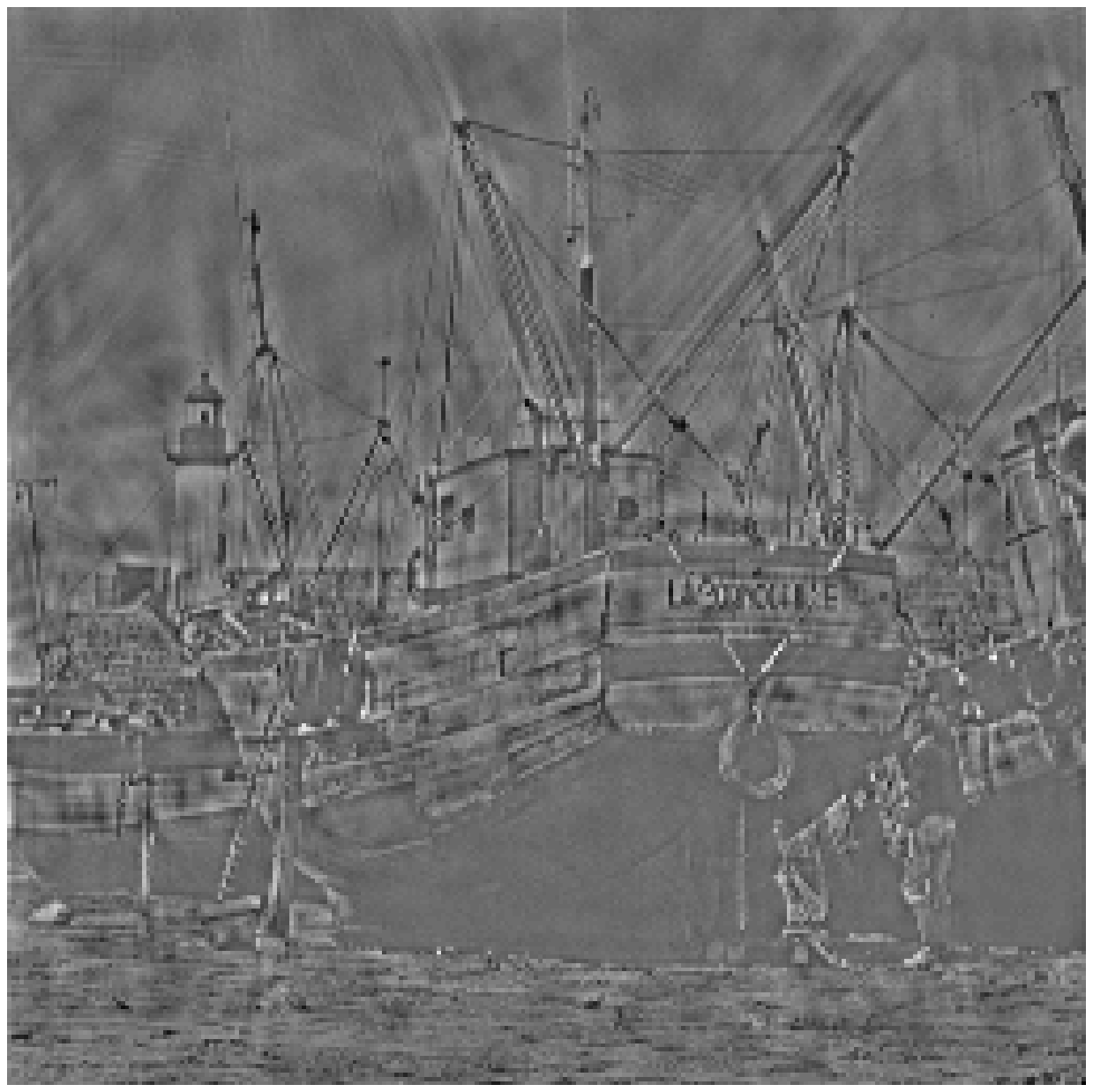,width=5.1cm,height=5.1cm}&
\epsfig{figure=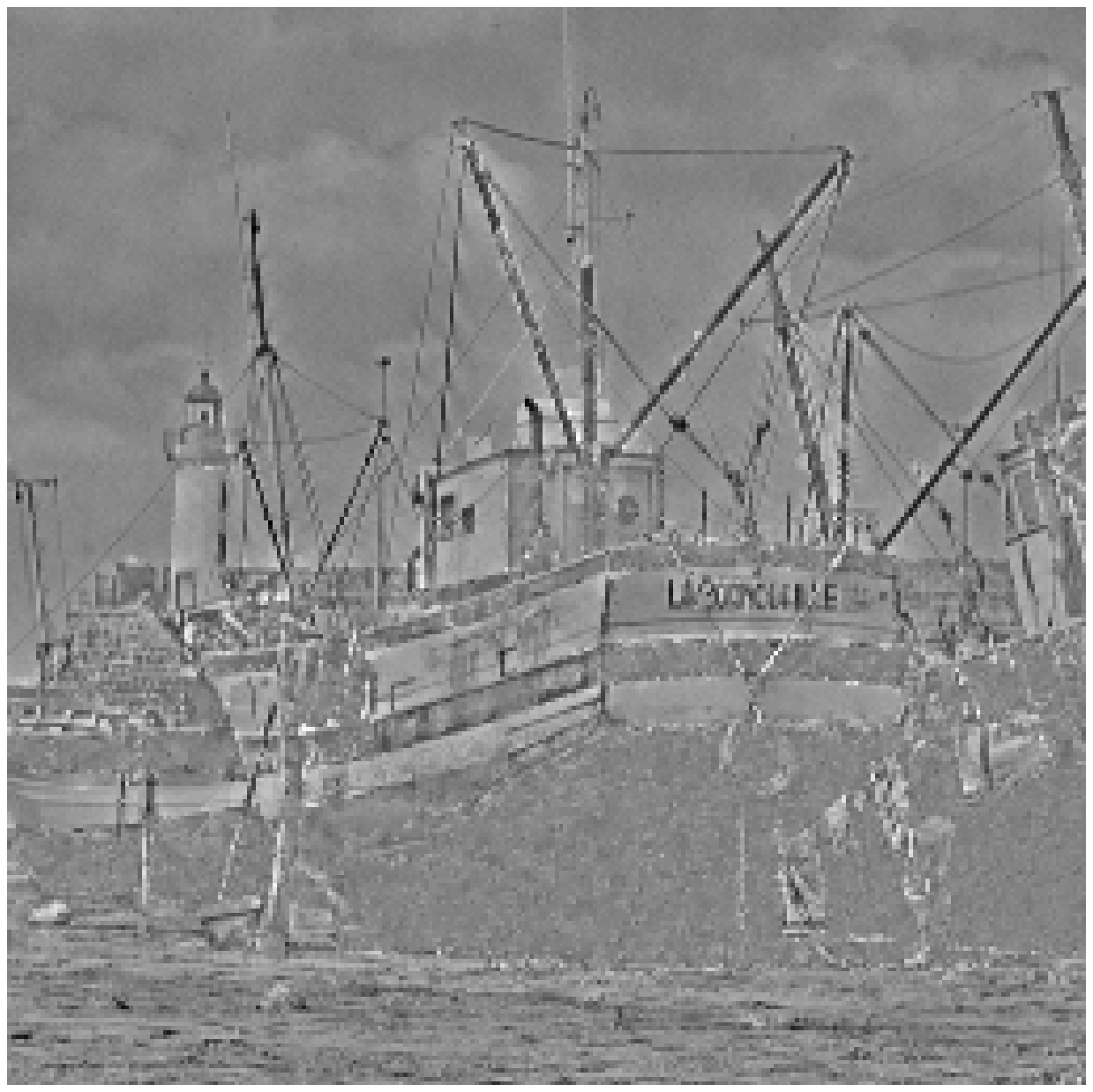,width=5.1cm,height=5.1cm}&
\epsfig{figure=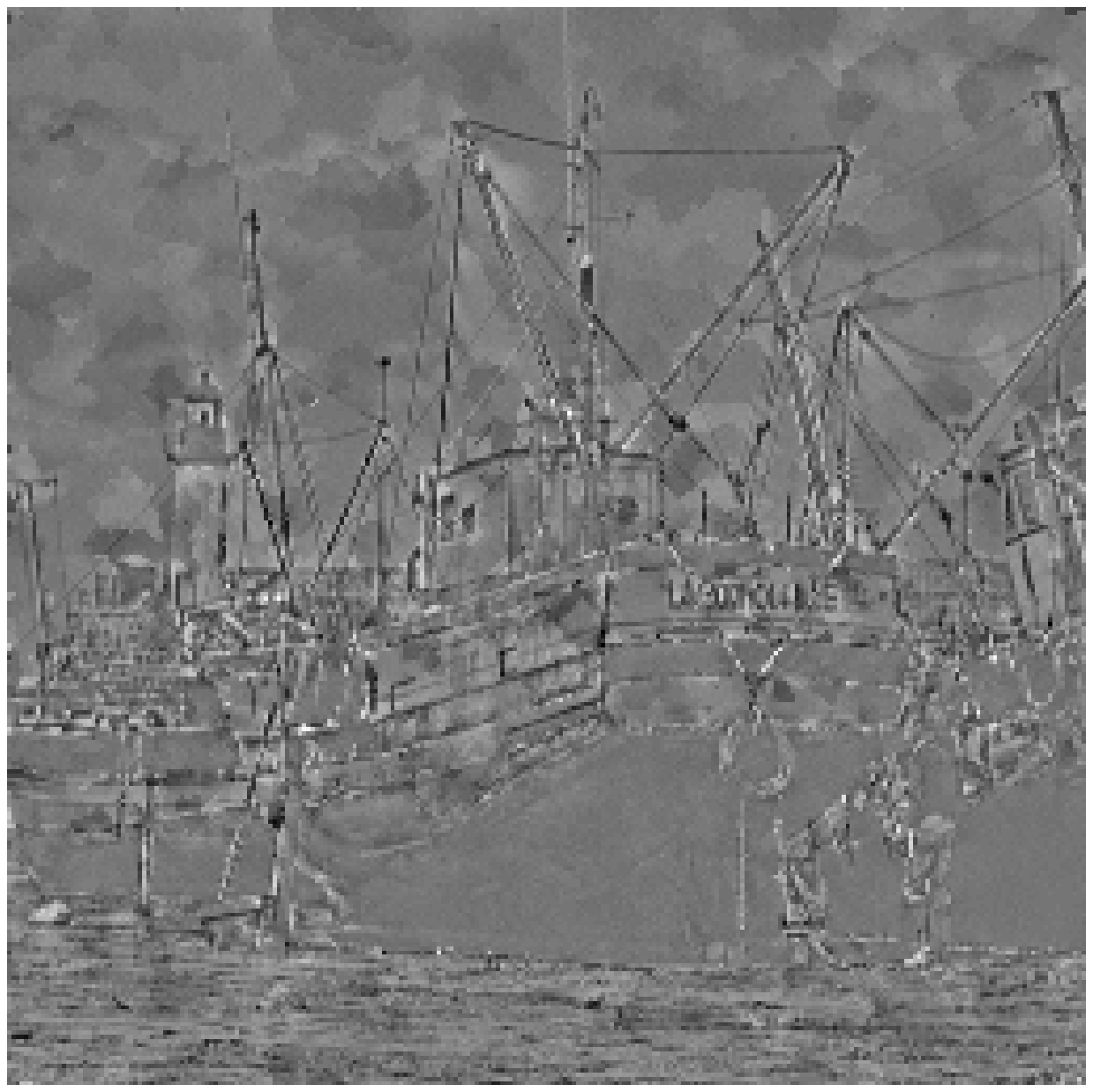,width=5.1cm,height=5.1cm}
\end{tabular}
\caption{Restoration of (b) using contemporary methods.
The last row shows the error images, namely (restored $-$ original) for (d), (e) and (f).}
\label{fig:boat}
\end{figure}
\section{Conclusions}\label{conclusion}

This work proposes quite an original, efficient and fast method for multiplicative noise removal.
The latter is a difficult problem that arises in various applications relevant to active imaging system, such as
laser imaging, ultrasound imaging, SAR and many others. Multiplicative noise contamination involves
inherent difficulties that severely restrict the main restoration algorithms.

The main ingredients of our method are: (1) consider the log-data to restore a log-image; (2) preprocess the log-fata
using and under-optimal hard-thresholding of its tight frame coefficients; (3) restore the log-image using a hybrid
criterion composed of an $\ell^1$ data-fitting for the coefficients and a TV regularization in the log-image domain;
(4) restore the sought-after image using an exponential transform along with a pertinent bias correction.
The resultant algorithm is fast, its consistency and convergence are proved theoretically.

The obtained numerical results are really encouraging since they outperform the most recent methods in this field.

\bibliographystyle{plain}
\markboth{}{}

\small

%
\def\jan{Jan. }
\def\feb{Feb. }
\def\mar{Mar. }
\def\apr{Apr. }
\def\may{May }
\def\jun{June }
\def\jul{July }
\def\aug{Aug. }
\def\sep{Sep. }
\def\oct{Oct. }
\def\nov{Nov. }
\def\dec{Dec. }
\def\Jan{Jan. }
\def\Feb{Feb. }
\def\Mar{Mar. }
\def\Apr{Apr. }
\def\May{May }
\def\Jun{June }
\def\Jul{July }
\def\Aug{Aug. }
\def\Sep{Sep. }
\def\Oct{Oct. }
\def\Nov{Nov. }
\def\Dec{Dec. }
\def\sub{submitted to }


\def\AsAs{Astrononmy and Astrophysics}
\def\AAP{Advances in Applied Probability}
\def\ABE{Annals of Biomedical Engineering}
\def\ABE{Annals of Biomedical Engineering}
\def\ACHA{Applied and Computational Harmonic Analysis}
\def\AISM{Annals of Institute of Statistical Mathematics}
\def\AMS{Annals of Mathematical Statistics}
\def\AO{Applied Optics}
\def\AP{The Annals of Probability}
\def\ARAA{Annual Review of Astronomy and Astrophysics}
\def\AST{The Annals of Statistics}
\def\AT{Annales des T\'el\'ecommunications}
\def\BMC{Biometrics}
\def\BMK{Biometrika}
\def\CPAM{Communications on Pure and Applied Mathematics}
\def\EMK{Econometrica}
\def\CRAS{Compte-rendus de l'acad\'emie des sciences}
\def\CVGIP{Computer Vision and Graphics and Image Processing}
\def\GJRAS{Geophysical Journal of the Royal Astrononomical Society}
\def\GSC{Geoscience}
\def\GPH{Geophysics}
\def\GRETSI#1{Actes du #1$^{\mbox{e}}$ Colloque GRETSI}
\def\CGIP{Computer Graphics and Image Processing}
\def\ICASSP{Proceedings of the IEEE Int. Conf. on Acoustics,
    Speech and Signal Processing}
\def\ICEMBS{Proceedings of IEEE EMBS}
\def\ICIP{Proceedings of the IEEE International Conference on Image Processing}
\def\EUSIPCO{Proceedings of European Signal Processing Conference}
\def\SSAP{Proceedings of the IEEE Statistical Signal and Array Processing}
\def\ieeP{Proceedings of the IEE}
\def\ieeeAC{IEEE Transactions on Automatic and Control}
\def\ieeeAES{IEEE Transactions on Aerospace and Electronic Systems}
\def\ieeeAP{IEEE Transactions on Antennas and Propagation}
\def\ieeeAPm{IEEE Antennas and Propagation Magazine}
\def\ieeeASSP{IEEE Transactions on Acoustics Speech and Signal Processing}
\def\ieeeBME{IEEE Transactions on Biomedical Engineering}
\def\ieeeCS{IEEE Transactions on Circuits and Systems}
\def\ieeeCT{IEEE Transactions on Circuit Theory}
\def\ieeeC{IEEE Transactions on Communications}
\def\ieeeGE{IEEE Transactions on Geoscience and Remote Sensing}
\def\ieeeGEE{IEEE Transactions on Geosciences Electronics}
\def\ieeeIP{IEEE Transactions on Image Processing}
\def\ieeeIT{IEEE Transactions on Information Theory}
\def\ieeeMI{IEEE Transactions on Medical Imaging}
\def\ieeeMTT{IEEE Transactions on Microwave Theory and Technology}
\def\ieeeM{IEEE Transactions on Magnetics}
\def\ieeeNS{IEEE Transactions on Nuclear Sciences}
\def\ieeePAMI{IEEE Transactions on Pattern Analysis and Machine Intelligence}
\def\ieeeP{Proceedings of the IEEE}
\def\ieeeRS{IEEE Transactions on Radio Science}
\def\ieeeSMC{IEEE Transactions on Systems, Man and Cybernetics}
\def\ieeeSP{IEEE Transactions on Signal Processing}
\def\ieeeSSC{IEEE Transactions on Systems Science and Cybernetics}
\def\ieeeSU{IEEE Transactions on Sonics and Ultrasonics}
\def\ieeeUFFC{IEEE Transactions on Ultrasonics Ferroelectrics and Frequency Control}
\def\IJC{International Journal of Control}
\def\IJCV{International Journal of Computer Vision}
\def\IJIST{International Journal of Imaging Systems and Technology}
\def\IP{Inverse Problems}
\def\ISR{International Statistical Review}
\def\IUSS{Proceedings of International Ultrasonics Symposium}
\def\JAPH{Journal of Applied Physics}
\def\JAP{Journal of Applied Probability}
\def\JAS{Journal of Applied Statistics}
\def\JASA{Journal of Acoustical Society America}
\def\JASAS{Journal of American Statistical Association}
\def\JBME{Journal of Biomedical Engineering}
\def\JCAM{Journal of Computational and Applied Mathematics}
\def\JCP{Journal of Computational Physics}
\def\JEWA{Journal of Electromagnetic Waves and Applications}
\def\JMIV{Journal of Mathematical Imaging and Vision}
\def\JMO{Journal of Modern Optics}
\def\JNDE{Journal of Nondestructive Evaluation}
\def\JMP{Journal of Mathematical Physics}
\def\JOSA{Journal of the Optical Society of America}
\def\JP{Journal de Physique}
\def\JRSSA{Journal of the Royal Statistical Society A}
\def\JRSSB{Journal of the Royal Statistical Society B}
\def\JRSSC{Journal of the Royal Statistical Society C}
\def\JSPI{Journal of Statistical Planning and Inference}
\def\JTSA{Journal of Time Series Analysis}
\def\JVCIR{Journal of Visual Communication and Image Representation}
\def\KAP{Kluwer \uppercase{A}cademic \uppercase{P}ublishers}
\def\MNAS{Mathematical Methods in Applied Science}
\def\MNRAS{Monthly Notices of the Royal Astronomical Society}
\def\MP{Mathematical Programming}
    \def\NSIP{NSIP}  
\def\OC{Optics Communication}
\def\PRA{Physical Review A}
\def\PRB{Physical Review B}
\def\PRC{Physical Review C}
\def\PRD{Physical Review D}
\def\PRL{Physical Review Letters}
\def\RGSP{Review of Geophysics and Space Physics}
\def\RPA{Revue de Physique Appliqu\'e}
\def\RS{Radio Science}
\def\SP{Signal Processing}
\def\siamAM{SIAM Journal on Applied Mathematics}
\def\siamCO{SIAM Journal on Control and Optimization}
\def\siamC{SIAM Journal on Control}
\def\siamJO{SIAM Journal on Optimization}
\def\siamMA{SIAM Journal on Mathematical Analysis}
\def\siamNA{SIAM Journal on Numerical Analysis}
\def\siamSC{SIAM Journal on Scientific Computing}
\def\siamMMS{SIAM Journal on Multiscale Modeling and Simulation}
\def\siamR{SIAM Review}
\def\SSR{Stochastics and Stochastics Reports}
\def\TPA{Theory of Probability and its Applications}
\def\TMK{Technometrics}
\def\TS{Traitement du Signal}
\def\UCMMP{U.S.S.R. Computational Mathematics and Mathematical Physics}
\def\UMB{Ultrasound in Medecine and Biology}
\def\US{Ultrasonics}
\def\USI{Ultrasonic Imaging}


\end{document}